\documentclass[a4paper,11pt,twoside]{article}

\usepackage[nodayofweek]{datetime}

\usepackage[utf8]{inputenc}
\usepackage[T1]{fontenc}
\usepackage[english]{babel}

\usepackage{charter}

\frenchspacing
\usepackage{xcolor}
\usepackage{verbatim}

\usepackage{hyperref}

\usepackage[top=3.cm, bottom=4.0cm, left=2.2cm, right=2.2cm]{geometry}
\usepackage{amsmath}
\usepackage{amsthm}	
\usepackage{amsfonts}	
\usepackage{amssymb,bbm}
\usepackage{shuffle}
\usepackage{mathrsfs}
\usepackage{pifont}

\usepackage{enumerate}
\usepackage{enumitem}
\usepackage[nobysame, alphabetic,initials]{amsrefs}

\usepackage{stmaryrd}
\usepackage{appendix}

\usepackage{graphicx}



\usepackage{matlab-prettifier}


\usepackage{
						ulem		
} \normalem

\numberwithin{equation}{section}

\theoremstyle{plain}
\newtheorem{theorem}{Theorem}[section]
\newtheorem{lemma}[theorem]{Lemma}
\newtheorem{proposition}[theorem]{Proposition}

\newtheorem{definition}[theorem]{Definition}

\newtheorem{remark}[theorem]{Remark}
\newtheorem{example}[theorem]{Example}
\newtheorem{assumption}[theorem]{Assumption}

\allowdisplaybreaks




\newcommand{\bE}{\mathbb{E}}
\newcommand{\bF}{\mathbb{F}}

\newcommand{\bH}{\mathbb{H}}

\newcommand{\bN}{\mathbb{N}}
\newcommand{\bP}{\mathbb{P}}

\newcommand{\bR}{\mathbb{R}}

\newcommand{\bW}{\mathbb{W}}

\newcommand{\bZ}{\mathbb{Z}}

\newcommand{\cA}{\mathcal{A}}
\newcommand{\cB}{\mathcal{B}}
\newcommand{\cC}{\mathcal{C}}

\newcommand{\cE}{\mathcal{E}}
\newcommand{\cF}{\mathcal{F}}
\newcommand{\cG}{\mathcal{G}}
\newcommand{\cH}{\mathcal{H}}

\newcommand{\cL}{\mathcal{L}}
\newcommand{\cM}{\mathcal{M}}

\newcommand{\cP}{\mathcal{P}}

\newcommand{\cT}{\mathcal{T}}

\newcommand{\cV}{\mathcal{V}}

\newcommand{\cX}{\mathcal{X}}



\newcommand{\scI}{\mathbf{I}}
\newcommand{\scJ}{\mathbf{J}}
\newcommand{\scK}{\mathbf{K}}


\newcommand{\rx}{\mathbf{X}}

\newcommand{\rZ}{\mathbf{Z}}

\newcommand{\RKHS}{\cH}
\newcommand{\fWIC}{\widehat{\cH}}

\DeclareMathOperator{\spn}{span}
\DeclareMathOperator{\lin}{Lin}

\newcommand{\oSlash}{{\mbox{\o}}}


\newcommand{\newX}{X^\dagger}



\newcommand{\1}{\mathbbm{1}}

\hyphenation{nu-me-ri-cal te-cno-lo-gia a-ppro-xi-ma-ting}


\usepackage{pgf,tikz}
\usetikzlibrary{arrows}
\usetikzlibrary{graphs,graphs.standard,quotes}

\tikzstyle{vertex} = [fill, shape=circle,inner sep=2pt,]
\tikzstyle{edge} = [fill, line width = 0.5pt]


\usepackage{enumerate}



\title{A Mimicking Theorem for processes driven by \\ fractional Brownian motion}
\author{
	\normalsize Kevin Hu \\[8pt]
	\small kevin\_hu@brown.edu
	\and
	\normalsize Kavita Ramanan \\[8pt]
	\small kavita\_ramanan@brown.edu
	\and
	\normalsize William Salkeld \\[8pt]
	\small william\_salkeld@brown.edu
}

\begin{document}
	
	\maketitle
	\setcounter{tocdepth}{1}
	\tableofcontents
	
	\newpage
	\section{Introduction}
	
	This paper focuses on answering the question ``Given a stochastic process, can we find a stochastic differential equation with a weak solution that has the same law as the law of the stochastic process?'' The existence of such a stochastic differential equation is referred to as a \emph{mimicking theorem} since the weak solution to the SDE mimics the properties of the stochastic process. 

    The first such mimicking theorem was proved in \cite{Gyongy1986Mimicking}: unlike subsequent and our present result, the coefficients of the mimicking SDE are functions of time and space, and the proof relies heavily on PDE techniques. More recently, \cite{Brunick2013Mimicking} proved a mimicking theorem where the coefficients are dependent on the entire path of the solution. This paper was motivated by calibration in financial mathematics and the desire to identify an appropriately rich class of stochastic differential equations for a financial model given asset price data. As asset prices data takes the form of a path, it is natural that the resulting stochastic processes are path-dependent, and therefore non-Markovian. 

    As we demonstrate below, mimicking theorems have in recent years found a wide range of applications in reinforced learning \cite{Jia2023QLearning}, mean-field games and control problems \cites{Lacker2023Closed,Daudin2023On} and finance \cite{Beiglbock2022From}. Our interest is focused on two applications: \emph{quantitative propagation of chaos} following the ideas of \cites{lacker2022Hierarchies, Lacker2023Sharp} and separately in the derivation of marginal dynamics for locally interacting stochastic differential equations \cite{lacker2020marginal}. 

    We prove a new mimicking theorem for a class of processes that are not It\^o diffusions. This distinguishes our work from all previous results and demonstrates that mimicking theorems can be proved without needing to reach for semi-martingale or PDE techniques. Following on from our first paper \cite{Hu2023Locally}, we consider stochastic processes with an additive Gaussian noise. The typical example of a Gaussian noise that we consider is \emph{fractional Brownian motion}, but we emphasise that our results hold for a rich class of Gaussian processes that satisfy the so-called ``securely locally non-deterministic'' property. 

    Originally motivated by the study of the locally interacting stochastic differential equations with an additive fractional Brownian motion, our desire was to prove a mimicking theorem that could be substituted into the techniques of \cite{lacker2020marginal} as part of the derivation of the so-called ``local equation''. What we realised was that such stochastic processes were a Volterra kernel convolution away from a semi-martingale, that we could apply a mimicking theorem to. Further. there was another Volterra kernel that transformed the semi-martingale to a stochastic process that had the same distribution as our original process. Essentially, this is the technique used to prove Theorems \ref{theorem:fbm-Mimicking} and \ref{brownianFilteringThm}. 

    By studying this problem in more detail, we found that this bijective transformation was only possible because the Hilbert space filtration of reproducing kernels for a fractional Brownian motion is isomorphic to the Hilbert space filtration of reproducing kernels for a Brownian motion. Further, this property also lead to powerful entropy results which were also critical to proving subsequent results in this work. We refer the interested reader to \cite{Hu2023Locally} which includes a more thorough study of the necessary Gaussian theory to prove these results. 
	
	\subsection*{Quantitative propagation of chaos}

    Consider some collection of stochastic differential equations
    \begin{equation*}
		dX_t^{i, n} = \bigg( b_0\big( t, X^{i,n} \big) + \tfrac{1}{n-1}\sum_{\substack{j=1 \\j \neq i}}^n b\big( t, X^{i,n}, X^{j,n} \big) \bigg) dt + dZ_t^{i,n}, \quad i = 1, ..., n.
	\end{equation*}
    \emph{Propagation of chaos} is the property that as the number of equations is taken to be large, the random empirical measure $\tfrac{1}{N} \sum_{i=1}^n \delta_{X^{i, n}}$ in the drift term can be replaced in the limit by a deterministic measure $\mu$ that corresponds to the law of the resulting dynamics. This \emph{McKean-Vlasov} stochastic differential equation 
    \begin{equation*}
		dX_t = \bigg( b_0\big( t, X \big) + \Big\langle \mu, b\big( t, X, \cdot \big) \Big\rangle \bigg) dt + dZ_t \quad \mu = \bP \circ \big( X \big)^{-1}
	\end{equation*}
    is known as the mean-field limit and such equations are frequently used to model the dynamics of large systems of equations that interact via an empirical measure. For a more comprehensive survey of the literature, we refer the reader to \cite{Jabin2017Mean}. 

    \emph{Quantitative propagation of chaos} is the discipline of establishing a rate of convergence for the empirical measure converging to the associated law of the McKean-Vlasov equation. Many results proving quantitative propagation of chaos rates find their origin in \cite{Sznitman}, where synchronous coupling arguments and Lipschitz coefficients (in both spatial variables and measure variables) lead to estimates with regard the \emph{Wasserstein metric} of the form
    \begin{equation*}
        \bW^{(2)}\big[ P^{n, k}, \mu^{\otimes k} \big] = O \big( \tfrac{k}{n} \big). 
    \end{equation*}
    Another popular technique is to demonstrate that the \emph{relative entropy} $\bH\big[ P^{n, n}\big| \mu^{\otimes n} \big]$ is uniform for $n \in \bN$. From this, one can conclude that
    \begin{equation*}
        \big\| P^{n, k} - \mu^{\otimes k} \big\|_{\mathbb{TV}}^2 = O\big( \tfrac{k}{n} \big). 
    \end{equation*}
    This work focuses on the techniques of \cite{lacker2022Hierarchies}, which are extended in \cite{Lacker2023Sharp}, to prove the sharp rates of convergence of the form
    \begin{equation*}
        \bH\big[ P^{n, k} \big| \mu^{\otimes k} \big] = O\Big( \big( \tfrac{k}{n} \big)^2 \Big), 
    \end{equation*}
    see Theorems \ref{qPropChaosfBM} and \ref{theorem:qPropChaos} below. Our results, which can be found in Section \ref{section:QPoC} are distinct from previous results in that we consider interacting stochastic differential equations that are driven by additive Gaussian noises rather than Brownian motion. The entropy estimates and mimicking theorem that we prove in Section \ref{section:Mimicking} are central to the techniques used in the proof. We also provide examples (in the form of fractional Ornstein-Uhlenbeck process, see Example \ref{example:fOU-prcess}) to demonstrate that the rate of convergence we prove is sharp. 

    \subsection*{Locally interacting processes}

    Another distinct collection of stochastic differential equations that we consider in Section \ref{section:LocalEq} are
    \begin{equation*}
    	\left.
    	\begin{aligned}
	    	&dX_t^u = b\Big( t, X^u[t], \mu^u\big( X[t] \big) \Big) dt + dZ_t^u
    		\quad
    		X_0^u \sim \lambda_0
    		\\
    		&\mu^u(X[t]) = \tfrac{1}{\kappa} \sum_{v\in N_u} \delta_{X^u[t]} 
    	\end{aligned}
    	\right\}
    	\quad
    	u \in V, 
        \quad 
        (V, E) \in \cG. 
    \end{equation*}
    These processes interact with one another based on some underlying graph structure where the stochastic differential equation associated to a vertex $u \in V$ has dynamics that depend only on itself and the dynamics of the neighbouring stochastic processes $N_u = \big\{v \in V: \{u, v\} \in E \big\}$. Such stochastic processes arise in statistical physics \cites{Dereudre2003Interacting, Redig2010Short}, mathematical finance \cite{Nadtochiy2020Mean} and oscillator synchronisation \cite{Medveded2019Continuum}.
    
    As an application of the Malliavin calculus results relating to the fundamental martingale proved in \cite{Hu2023Locally}, the authors demonstrate that such stochastic processes form a \emph{2-Markov random field} on pathspace. In this paper, we go further in combination with our mimicking theorem and entropy estimates from Section \ref{section:Mimicking} to describe a McKean-Vlasov equation whose law is equal to the marginal dynamics of some small neighbourhood of the graph. Building on the ideas of \cite{lacker2020marginal}, we derive the so-called \emph{local equation} (see Theorem \ref{theorem:LocalEquation}) and extend previous results to prove a much broader weak uniqueness result for the law of the local equation (see Theorem \ref{theorem:Local-Uniq}). 
    
    Other works that consider such locally interacting equations with Brownian motion include \cites{Lacker2019Local,lacker2020Locally}. 

    \subsection*{Summary}

    In Section \ref{section:MainResultsMim}, we provide an accessible summary of the results we prove in this paper. These results are specific to \emph{fractional Brownian motion}, which is the canonical example of the Gaussian processes that we use as additive noise. 

    In Section \ref{subsec:Filtering}, we summarise and extend results from \cite{Hu2023Locally} to explain how a fractional Brownian motion and other Gaussian processes that are said to be ``securely locally non-deterministic'' can be transformed via convolution with a Volterra kernel into a Brownian motion. 

    In Section \ref{section:Mimicking}, we apply this theory of Volterra kernels to consider the \emph{fundamental semimartingale} associated to a collection of stochastic processes. Next, we use this representation to prove a mimicking theorem (see Theorem \ref{brownianFilteringThm}), and entropy and transport estimates (see Proposition \ref{fBmEntropy} and Proposition \ref{proposition:TalagrandIQ}). 

    In Section \ref{section:QPoC}, we prove quantitative propagation of chaos results for a collection of stochastic differential equations driven by additive Gaussian noises as an application of Theorem \ref{brownianFilteringThm}. These results adapt the techniques of \cite{lacker2022Hierarchies}, although the technical results needed to accommodate the additive Gaussian noises (which include Proposition \ref{fBmEntropy}) are far from immediate.  

    Separately, in Section \ref{section:LocalEq} we consider collections of locally interacting stochastic differential equations with independent additive Gaussian noises indexed by a $\kappa$-regular graph. Using Theorem \ref{brownianFilteringThm}, we derive a $(\kappa+1)\times d$-dimensional McKean-Vlasov equation that has the same law as the marginal dynamics of a collection of stochastic processes associated to a graph ball of radius $1$. We show weak existence, and under additional assumptions weak uniqueness for this McKean-Vlasov equation. Further, these assumptions are more general than the associated results proved in \cite{lacker2020marginal} and rely on the establishment of a conditional Talagrand inequality (see Proposition \ref{proposition:Talagrand2}) which involves a number of technical adjustments the the proof method of Proposition \ref{proposition:TalagrandIQ}. 
    
    \subsection*{Notation}

    For $X\in C([0,T]; \bR^d):=\cC_T^d$, we denote $x[t] \in \cC_t^d$ for the continuous path up to $t$ and $x_t$ for the evaluation of the path at time $t\in [0,T]$. We denote the constant $0$ path on the interval $[0,t]$ by $0[t]$. 
    
    \subsubsection*{Measure theory}
    
    For $p \geq 1$, $(\cX, d)$ a metric space with Borel $\sigma$-algebra $\cB(\cX)$ and two probability measures $\mu, \nu \in \cP(\cX)$ we denote the $p$-\emph{Wasserstein distance} by
    \begin{equation*}
        \bW_d^{(p)} \big[ \mu, \nu \big] = \left( \inf_{\pi \in \Pi(\mu, \nu)} \int_{\cX \times \cX} d(x, y)^p d\pi(x, y) \right)^{\tfrac{1}{p}}
    \end{equation*}
    where $\Pi(\mu, \nu)$ is the set of all measures on $\Big( \cX \times \cX, \sigma \big( \cB(\cX) \times \cB(\cX) \big) \Big)$ with marginals $\mu$ and $\nu$. For a measure $\mu \in \cP(\cX)$ and $f:\cX \to \bR$ we denote
    \begin{equation*}
        \big\langle \mu, f \big\rangle = \int_{\cX} f(x) d\mu(x). 
    \end{equation*}

    For two measures $\mu, \nu \in \cP(\cX)$ we use $\nu << \mu$ to denote that $\nu$ is absolutely continuous with respect to $\mu$. We denote the \emph{relative entropy functional} by
    \begin{equation}
        \label{eq:RelativeEntropy}
        \bH\big[ \mu\big| \nu \big] = \left\{
        \begin{aligned}
            &\int_\cX \log\bigg( \frac{ d\mu}{d\nu}(x) \bigg) d\mu(x) 
            &\quad& \mbox{if}\quad
            \log\Big( \tfrac{ d\mu}{d\nu} \Big) \in L^1\big( \cX, \mu; \bR \big), 
            \\
            &\infty &\quad& \mbox{otherwise. }
        \end{aligned}
        \right.
    \end{equation}
    Sometimes, $\bH\big[ \mu \big| \nu \big]$ is referred to  as the \emph{Kullback–Leibler divergence}. 

	\section{Main Results}
    \label{section:MainResultsMim}

    The main contributions of this paper are three-fold. Firstly, we provide a Mimicking Theorem (Theorem \ref{theorem:fbm-Mimicking}) for stochastic differential equations driven by an additive fractional Brownian motion (see Definition \ref{definition:fBm} below). A similar result is also proved for a much richer class of stochastic differential equations with an additive Gaussian noise. Previous results rely heavily on semimartingale decompositions for the mimicked process, but our results are able to generalise these ideas to a much wider class of stochastic processes. Further generalisations of these results are discussed and proved in Section \ref{section:Mimicking}. 

    Secondly, as an application of this mimicking theorem we prove a sharp rate of convergence for \emph{quantitative propagation of chaos} for collections of interacting equations each driven by independent fractional Brownian motions (Theorem \ref{qPropChaosfBM}). These results are generalised and contextualised in Section \ref{section:QPoC}

    Thirdly, we derive and prove weak existence and uniqueness of a conditional McKean-Vlasov equation whose law is equal to the marginal dynamics of a collection of stochastic differential equations that interact locally with one another and are driven by independent additive fractional Brownian motions (see Theorem \ref{theorem:localEquation-fbm}). These results are expounded on in Section \ref{section:LocalEq}. 

    \subsection{Fractional Brownian motion}

    The focus of this work is for stochastic differential equations driven by an additive Gaussian noise, the most intuitive of these being \emph{fractional Brownian motion}:
    \begin{definition}
        \label{definition:fBm}
        A one dimensional \emph{fractional Brownian motion} $(Z_t)_{t\in [0,T]}$ with Hurst parameter $H\in (0, 1)$ is a centered Gaussian process with covariance defined for $t, s\in [0,T]$ by
        \begin{equation}
            \label{eq:covariance-fbm}
            \bE\Big[ Z_t Z_s \Big] = R(t, s) = \tfrac{1}{2} \Big( |t|^{2H} + |s|^{2H} - |t-s|^{2H} \Big). 
        \end{equation}
        A $d$-dimensional fractional Brownian motion is a centred Gaussian process with covariance
        \begin{equation*}
            \bE\Big[ \big\langle Z_t, Z_s \big\rangle \Big] = d \cdot R(t, s). 
        \end{equation*}
    \end{definition}
    A fractional Brownian motion is a continuous time stochastic process that satisfies $Z_0 = 0$ and is the unique centred Gaussian process that is self-similar and has stationary increments:
    \begin{equation*}
        Z_{\lambda t} \sim |\lambda|^H \cdot Z_t 
        \quad \mbox{and}\quad
        Z_t - Z_s \sim Z_{t-s}. 
    \end{equation*}

    The case $H=1/2$ corresponds to that of Brownian motion, but otherwise $Z$ is neither a Markov process nor a semimartingale. However, much like Brownian motion the process is $\bP$-almost surely $\alpha$-H\"older continuous for $\alpha < H$.

    A fractional Brownian motion can be written as a Gaussian Volterra process 
    \begin{equation*}
        Z_t = \int_0^t K(t, s) dW_s 
    \end{equation*}
    where $K(t, s)=0$ for $t<s$ and for $s<t$
    \begin{equation}
        \label{eq:fBmKernel}
        K(t, s) = \left\{
        \begin{aligned}
            &c_H \cdot s^{\tfrac{1}{2}-H} \int_s^t (r-s)^{H - \tfrac{3}{2}} r^{H-\tfrac{1}{2}} dr
            &\quad
            H>\tfrac{1}{2},
            \\
            &c_H\bigg( \Big( \frac{t(t-s)}{s} \Big)^{H-\tfrac{1}{2}} - \big( H- \tfrac{1}{2} \big) \cdot s^{\tfrac{1}{2}-H} \int_s^t r^{H-\tfrac{3}{2}} (r-s)^{H-\tfrac{1}{2}} dr \bigg)
            &\quad
            H<\tfrac{1}{2}
        \end{aligned}
        \right.
    \end{equation}
    where
    \begin{equation*}
        c_H = \left\{
        \begin{aligned}
            &\bigg( \frac{H(2H-1)}{\beta(2 - 2H, H- \frac{1}{2})} \bigg)^{\frac{1}{2}} 
            &\quad
            H>\tfrac{1}{2},
            \\
            &\bigg( \frac{2H}{(1 - 2H) \beta(1 - 2H, H + \frac{1}{2})}\bigg)^{\frac{1}{2}}
            &\quad
            H<\tfrac{1}{2} 
        \end{aligned}
        \right.
    \end{equation*}
    see for example \cite{nualart2006malliavin}*{Chapter 5}. Through direct computation, we can verify via the It\^o isometry that 
    \begin{align*}
        \bE\Big[ Z_t Z_s \Big] = \int_0^T K(t, u) K(s, u) du = R(t, s). 
    \end{align*}

    \subsubsection*{The fundamental Martingale}

    The fundamental martingale is concept first introduced in \cite{Norros1999Elementary} to understand the change of measure for stochastic differential equations driven by a fractional Brownian motion and was subsequently developed in \cites{Kleptsyna2000General, Kleptsyna2000Parameter} to study filtering problems for stochastic differential equations driven by fractional Brownian motion. We use a slightly different Volterra kernel (Equation \eqref{eq:fbm_Volterra} below) to the one stated in \cite{Norros1999Elementary} which allows us to map directly onto a Brownian motion. The Volterra kernel is somewhat more complicated, but the advantage is that we do not need to account for additional quadratic variation terms that arise in \cites{Kleptsyna2000General, Kleptsyna2000Parameter}. 

    It is commonly understood that a fractional Brownian motion is not a semimartingale; indeed fractional Brownian motion often chosen as the go-to stochastic process that is not a semimartingale. However, consider the following Volterra process driven by a fractional Brownian motion $Z$ (with Hurst parameter $H\in (0,1)$)
    \begin{equation*}
        W_t^* = \int_0^T L(t, s) dZ_s 
    \end{equation*}
    where $L(t, s)=0$ for $t<s$ and for $s<t$
    \begin{equation}
        \label{eq:fbm_Volterra}
        L(t, s) = \left\{
        \begin{aligned}
            &\Big( \frac{s(t-s)}{t} \Big)^{\tfrac{1}{2}-H} - (H-\tfrac{1}{2}) s^{\tfrac{1}{2} - H} \int_s^t (r-s)^{\tfrac{1}{2}-H} r^{H-\tfrac{3}{2}} dr
            &\quad
            H>\tfrac{1}{2},
            \\
            &s^{\tfrac{1}{2}-H} \int_s^t (r-s)^{-H-\tfrac{1}{2}} r^{H-\tfrac{1}{2}} dr
            &\quad
            H<\tfrac{1}{2}.
        \end{aligned}
        \right.
    \end{equation}
    The Volterra kernel $L:[0,T] \to \fWIC$ has the unique property that
    \begin{align*}
        \bE\Big[ W_t^* W_s^* \Big] = t\wedge s
    \end{align*}
    so that $W^*$ is a Brownian motion. We prove that the stochastic process $(W_t^*)_{t\in [0,T]}$ is a local-martingale with respect to the filtration $\bF^Z = (\cF_t^Z)_{t\in [0,T]}$ and further the two filtrations $\bF^Z$ and $\bF^{W^*}$ are equal. Therefore, while a fractional Brownian motion is not a semimartingale, it is a Volterra convolution away from a Brownian motion. We prove these results not just for fractional Brownian motion but also for a class of Gaussian processes in Section \ref{subsec:Filtering}. 

    The key insight of this paper is to exploit this proximity between a semimartingale and a stochastic differential equation with an additive fractional Brownian motion driving the dynamics to prove a \emph{Mimicking Theorem}. 
	
	\subsection{Projection and Mimicking Theorems}

    Let $(\Omega, \cF, \bP)$ be a complete probability space, let $Z$ be a fractional Brownian motion on $(\Omega, \cF, \bP)$ and let $\bF^Z = (\cF_t^Z)_{t\in [0,T]}$ be the filtration generated by $Z$ such that $\cF_0$ contains all $\bP$-null sets. Let $b:\Omega \times [0, T] \to \bR^d$ be some $\cF_t$-progressively measurable stochastic process and let $X_0$ be $\cF_0$-measurable. Consider a stochastic process of the form
    \begin{equation}
        \label{eq:brownianFilteringThm*}
		X_t = X_0 + \int_0^t b_s ds + Z_t. 
	\end{equation}
    The goal is to find stochastic differential equations whose law under the probability measure $\bP$ is equal to the law under $\bP$ of the stochastic process defined in Equation \eqref{eq:brownianFilteringThm*}. 

    \begin{theorem}
        \label{theorem:fbm-Mimicking}
        Let $(\Omega, \cF, \bP)$ be a probability space carrying a fractional Brownian motion $Z$ with Hurst parameter $H\in (0,1)$ and let $\bF = (\cF_t^Z)_{t\in [0,T]}$ be the filtration such that $\cF_t^Z = \sigma\big( Z_s: s\in [0,t] \big)$ such that $\cF_0^Z$ contains all $\bP$-null sets. Let $K:[0,T] \to L^2\big([0,T]; \bR\big)$ and $L:[0,T] \to \fWIC$ be the Volterra kernels defined in Equation \eqref{eq:fBmKernel} and Equation \eqref{eq:fbm_Volterra}. 
        
        Let $b:\Omega \times [0,T] \to \bR$ be progressively measurable and suppose that
        \begin{equation}
            \label{eq:theorem:fbm-Mimicking-b}
            \bE\bigg[ \Big\| \int_0^\cdot b_s ds \Big\|_{\RKHS_T} \bigg] < \infty.
        \end{equation}
        Let $X$ be a stochastic process of the form \ref{eq:brownianFilteringThm*} such that $X$ is the canonical process of the Wiener space. 
  
		Let $\bF^X = (\cF_t^X)_{t \in [0,T]}$ be the filtration generated by $X$. Then, there exists an extension 
		\begin{equation*}
			\big( \hat{\Omega}, \hat{\cF}, (\hat{\cF}_t)_{t\in [0,T]}, \hat{\bP} \big)
			\quad \mbox{of}\quad
			\big( \Omega, \cF, \bF^X, \bP \big)
		\end{equation*}
		carrying a fractional Brownian motion and a weak solution to the SDE 
		\begin{equation*}
			\hat{X}_t = \hat{X}_0 + \int_0^t K(t, s) \tilde{Q}\big( s, \hat{X}[s] \big) ds + \hat{Z}_t.
		\end{equation*}
		where
		\begin{align}
            \label{eq:theorem:fbm-Mimicking-Q}
			\tilde{Q}\big( t, X[t] \big) =& \bE\bigg[ \frac{d}{dt}\Big( \int_0^t L(t, s) b_s ds \Big) \bigg| X[t] \bigg]
            \\
            \nonumber
            =& (\tfrac{1}{2} -H) t^{H-\tfrac{1}{2} } \bE\bigg[ \int_0^t b_s \cdot (t-s)^{-(\tfrac{1}{2}+H)} s^{\tfrac{1}{2}-H} ds \bigg| X[t] \bigg]. 
		\end{align}
        Further, the law of $\hat{\bP} \circ (\hat{X})^{-1}$ is equal to $\bP \circ (X)^{-1}$. 
    \end{theorem}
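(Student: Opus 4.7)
The strategy is a three-stage sandwich: transform the fractional equation into a Brownian semimartingale via the Volterra kernel $L$, apply a path-dependent mimicking theorem (Brunick--Shreve) in the Brownian regime, then invert the transformation via $K$ to recover a fractional mimicking SDE.

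\emph{Stage 1 (Volterra reduction).} Define
\[ Y_t := \int_0^t L(t,s)\, dX_s \quad\text{and}\quad W^*_t := \int_0^t L(t,s)\, dZ_s. \]
By the filtering results recalled in Section \ref{subsec:Filtering}, $W^*$ is a Brownian motion with $\bF^Z = \bF^{W^*}$; since $K$ and $L$ induce mutually inverse pathwise transformations, the analogous bijection $X \leftrightarrow Y$ yields $\bF^X = \bF^Y$. The assumption \eqref{eq:theorem:fbm-Mimicking-b} precisely states that, almost surely, $\int_0^\cdot b_s\, ds$ lies in the Cameron--Martin space of $Z$; equivalently, there exists $\tilde b \in L^1([0,T])$ with $\int_0^t L(t,s) b_s\, ds = \int_0^t \tilde b_s\, ds$, identified via $\tilde b_t = \tfrac{d}{dt}\int_0^t L(t,s) b_s\, ds$. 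Hence
\[ Y_t = \int_0^t \tilde b_s\, ds + W^*_t \]
is a genuine Brownian semimartingale whose drift is $\bF^Y$-progressively measurable.

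\emph{Stage 2 (Brownian mimicking).} Apply the path-dependent mimicking theorem of \cite{Brunick2013Mimicking} to $Y$: on an extension of $(\Omega, \cF, \bF^Y, \bP)$ there exists a Brownian motion $\hat W$ and a weak solution
\[ \hat Y_t = \hat Y_0 + \int_0^t \hat Q\bigl(s, \hat Y[s]\bigr)\, ds + \hat W_t, \qquad \hat Q\bigl(s, y[s]\bigr) := \bE\bigl[\, \tilde b_s \,\big|\, Y[s] = y[s]\, \bigr], \]
with $\hat \bP \circ \hat Y^{-1} = \bP \circ Y^{-1}$.

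\emph{Stage 3 (Inverse transformation).} Define $\hat Z_t := \int_0^t K(t,s)\, d\hat W_s$, which is a fractional Brownian motion with Hurst parameter $H$ by the Volterra representation \eqref{eq:fBmKernel}, and $\hat X_t := \hat X_0 + \int_0^t K(t,s)\, d\hat Y_s$. A stochastic Fubini argument splits the latter as
\[ \hat X_t = \hat X_0 + \int_0^t K(t,s)\, \hat Q\bigl(s, \hat Y[s]\bigr)\, ds + \hat Z_t. \]
The pathwise bijection $\hat X \leftrightarrow \hat Y$ gives $\bF^{\hat X} = \bF^{\hat Y}$, so the conditional expectation defining $\hat Q$ can be rewritten as a conditional expectation given $\hat X[s]$, which after direct differentiation of the explicit kernel $L$ yields exactly the drift $\tilde Q$ of \eqref{eq:theorem:fbm-Mimicking-Q}. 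Law equality $\hat \bP \circ \hat X^{-1} = \bP \circ X^{-1}$ is inherited from Stage 2 via the deterministic bijection $X \leftrightarrow Y$.

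\emph{Main obstacle.} The crux is Stage 1: the kernel $L$ is singular at the diagonal in both regimes $H \gtrless \tfrac{1}{2}$, so the existence and progressive measurability of the time derivative $\tilde b_t$ rest on a careful function-analytic interpretation of the Cameron--Martin norm $\|\cdot\|_{\RKHS_T}$, and one must verify that this pathwise regularity is preserved under the Brunick--Shreve extension before the conditional expectation in \eqref{eq:theorem:fbm-Mimicking-Q} is meaningful. A secondary subtlety is justifying the stochastic Fubini interchange in Stage 3 against the merely $L^1$-integrable, path-dependent drift of $\hat Y$, and matching the two representations of $\tilde Q$ in \eqref{eq:theorem:fbm-Mimicking-Q} across the two Hurst regimes.
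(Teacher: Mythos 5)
Your proposal is correct and follows essentially the same route as the paper: transform to the fundamental semimartingale $X^\dagger_t = X_0 + \int_0^t Q^b_s\,ds + W^*_t$ via $L$ (the paper's Proposition \ref{proposition:Filtrations-X,newX}, which also gives $\bF^X=\bF^{X^\dagger}$), mimic the drift of the resulting Brownian semimartingale by conditioning on the path (the paper's Proposition \ref{kavitaFilteringLemma}, proved via the martingale problem rather than by citing \cite{Brunick2013Mimicking} directly, but in the same spirit), and convolve back with $K$ to recover the fractional SDE and the equality of laws. The "main obstacle" you identify (existence and progressive measurability of $Q^b=\tfrac{d}{dt}\int_0^t L(t,s)b_s\,ds$) is exactly what the paper delegates to Equations \eqref{eq:lemma:Existence_Q}--\eqref{eq':lemma:Existence_Q} and Proposition \ref{proposition:ProgressiveMeasurability}.
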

    In fact, we prove a similar Theorem for a much wider class of Gaussian processes in Theorem \ref{brownianFilteringThm} (see Section \ref{subsection:FundamentWienerMim}). 

    \begin{remark}
        Looking ahead to the definition of $Q^b$ in Equation \eqref{Q} which we see present in \ref{eq:theorem:fbm-Mimicking-Q}. The existence of a measurable function of the form expressed in Equation \eqref{eq:theorem:fbm-Mimicking-Q} is by no means immediate but is derived from properties of the Volterra kernel. More details on this are given in Section \ref{subsec:Filtering} and also \cite{Hu2023Locally}. 
    \end{remark}

    \subsubsection*{Entropy estimates}

    Another favourable property of this representation for stochastic processes driven by fractional Brownian motion is that we can calculate the relative entropy between processes explicitly:    \begin{proposition}
        \label{fBmEntropy*}
        Let $(\Omega, \cF, \bP)$ be a Wiener space carrying a fractional Brownian motion with Hurst parameter $H \in (0,1)$. For $i=1,2$, let $P_0^i \in \cP( \bR^d )$ and let $b^i : [0,T] \times \Omega \to \bR^d$ be progressively measurable. Suppose that there exists a weak solution $P^i$ to the stochastic differential equations
        \begin{equation}
            dY^i_t = b^i\big( t, Y^i[t] \big) dt + dZ^i_t, \quad t \in [0, T] 
            \quad \mbox{with} \quad
            Y_0^i \sim P_0^i. 
        \end{equation}
        Additionally, suppose that
        \begin{align}
            \nonumber
            &P^i\bigg[ \Big\| \int_0^\cdot b_s^1 ds \Big\|_{\RKHS_T} < \infty, \quad  \Big\| \int_0^\cdot b_s^2 ds \Big\|_{\RKHS_T} < \infty \bigg] =1 \quad \mbox{and}\quad
            \bE^{P^i}\bigg[ \Big\| \int_0^\cdot b_s^1 - b_s^2 ds \Big\|_{\RKHS_T}^2 \bigg] < \infty. 
        \end{align}
        Then $P_0^1<< P_0^2$ implies that $P^1<<P^2$ and for $Y[t] \in \cC_t^d$,  
        \begin{align*}
            \frac{dP^1}{dP^2} \big( Y[t] \big)
            &= 
            \frac{dP_0^1}{dP_0^2}(Y_0) \cdot 
            \exp\Bigg( \delta\bigg( \int_0^{\cdot\wedge t} K(\cdot, s) Q^{b^1-b^2}\big(s, Y \big) ds \bigg) - \tfrac{1}{2} \bigg\| \int_0^{\cdot} (b^1 - b^2)\big( s, Y[s] \big) ds \bigg\|_{\RKHS_t}^2 \Bigg)
        \end{align*}
        where $\delta$ is the Malliavin divergence. 
        
        Further, suppose that $\bH\big[ P_0^1 \big| P_0^2 \big]< \infty$. Then for every $t\in [0,T]$
        \begin{equation*}
            \bH\Big[ P^1[t] \Big| P^2[t] \Big] = \bH\Big[ P_0^1 \Big| P_0^2 \Big] + \frac{1}{2} \bE\bigg[ \Big\| \int_0^{\cdot} b_s^1 - b_s^2 ds  \Big\|_{\RKHS_t}^2 \bigg]. 
        \end{equation*}
    \end{proposition}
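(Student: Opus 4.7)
The plan is to reduce the problem to a standard Girsanov change of measure for a Brownian diffusion by pushing the SDE through the Volterra kernel $L$ of Equation \eqref{eq:fbm_Volterra}. Recall from Section \ref{subsec:Filtering} that convolution with $L$ sends $Z^i$ to a Brownian motion $W^{*,i}$ generating the same filtration, while convolution with $K$ is its inverse. Applied to $Y^i$, this transformation yields the fundamental semimartingale
\begin{equation*}
\tilde Y^i_t \;=\; \int_0^T L(t,s)\, dY^i_s \;=\; \int_0^t Q^{b^i}(s, Y^i[s])\, ds \;+\; W^{*,i}_t,
\end{equation*}
and the map $\int_0^{\cdot} b_s\, ds \mapsto Q^{b}$ realises the Cameron--Martin isometry $\RKHS_T \to L^2([0,T]; \bR^d)$. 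In particular, the hypothesis on $\|\int_0^{\cdot}(b^1-b^2)_s\, ds\|_{\RKHS_T}$ translates into square-integrability of $Q^{b^1-b^2}$, which is exactly the Novikov-type input that standard Girsanov asks for on the Brownian side.

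First I would verify the identity $\int_0^{\cdot} (b^1-b^2)_s\, ds = \int_0^{\cdot} K(\cdot,s)\, Q^{b^1-b^2}(s,Y)\, ds$ almost surely, using the mutual inversion of $K$ and $L$ proved in Section \ref{subsec:Filtering}. Next, I would apply classical Girsanov on the Brownian side: under $P^2$, $W^{*,2}$ is a Brownian motion, and shifting the drift of $\tilde Y$ from $Q^{b^2}$ to $Q^{b^1}$ produces the density
\begin{equation*}
\exp\!\Big( \int_0^t Q^{b^1-b^2}(s,Y)\, dW^{*,2}_s \;-\; \tfrac{1}{2}\int_0^t \big|Q^{b^1-b^2}(s,Y)\big|^2\, ds \Big).
\end{equation*}
Multiplying by $(dP_0^1/dP_0^2)(Y_0)$ and rewriting the It\^o integral as the Malliavin divergence of the Cameron--Martin path $\int_0^{\cdot\wedge t} K(\cdot,s)\, Q^{b^1-b^2}(s,Y)\, ds$, together with the $L^2$-integral as $\|\int_0^{\cdot}(b^1-b^2)_s\, ds\|^2_{\RKHS_t}$, then recovers the stated Radon--Nikodym formula and, by the chain rule, gives $P^1 \ll P^2$.

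For the entropy identity, I would compute $\bH[P^1[t]\,|\,P^2[t]] = \bE^{P^1}[\log(dP^1/dP^2)]$ term by term. The initial-data factor contributes $\bH[P_0^1\,|\,P_0^2]$. Applying Girsanov a second time, under $P^1$ the shifted process $W^{*,2}_\cdot - \int_0^{\cdot} Q^{b^1-b^2}(s,Y)\, ds$ is itself a Brownian motion, so $\bE^{P^1}\big[\int_0^t Q^{b^1-b^2}\, dW^{*,2}_s\big] = \bE^{P^1}\big[\int_0^t |Q^{b^1-b^2}|^2\, ds\big]$. Combined with the $-\tfrac{1}{2}$ quadratic term inside the logarithm, this yields the announced factor $\tfrac{1}{2}\,\bE[\|\int_0^{\cdot}(b^1-b^2)_s\, ds\|^2_{\RKHS_t}]$.

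The main obstacle is interpreting the Malliavin divergence $\delta$ correctly: because $Z$ is not a semimartingale, $\delta$ cannot be read as an It\^o integral against $Z$, and every manipulation must pass through the filtration isomorphism between $\bF^Z$ and $\bF^{W^*}$ together with the $K \leftrightarrow L$ convolution duality. Particular care is needed to ensure that drifts that are progressively measurable with respect to $\bF^Y$ remain progressively measurable after the Volterra transformation, so that Girsanov legitimately applies on the Brownian side. Once this Gaussian-analytic bookkeeping is handled via the machinery of Section \ref{subsec:Filtering}, the remaining arguments are classical.
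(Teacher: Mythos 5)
Your overall strategy coincides with the paper's: convolve with $L$ to pass to the fundamental semimartingale $Y^{\dagger,i}$, do the change of measure on the Brownian side, and translate the $L^2([0,T])$-norm of $Q^{b^1-b^2}$ back into $\|\int_0^\cdot (b^1-b^2)_s\,ds\|_{\RKHS_t}$ via Equation \eqref{eq':lemma:Existence_Q}. The entropy computation in your last paragraph is also consistent with what the paper obtains.

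There is, however, a genuine gap in your Girsanov step. You assert that the hypothesis $\bE^{P^i}[\|\int_0^\cdot(b^1-b^2)_s\,ds\|_{\RKHS_T}^2]<\infty$, i.e.\ $\bE[\int_0^T|Q^{b^1-b^2}_s|^2\,ds]<\infty$, ``is exactly the Novikov-type input that standard Girsanov asks for.'' It is not: Novikov requires $\bE[\exp(\tfrac12\int_0^T|Q^{b^1-b^2}_s|^2\,ds)]<\infty$, and mere square-integrability of the drift difference does not guarantee that the stochastic exponential is a uniformly integrable martingale, so ``classical Girsanov'' does not by itself deliver $P^1\ll P^2$ or the density formula here. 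The paper circumvents this in two moves you should adopt: first, it uses the bijectivity of the measurable maps $Y\mapsto Y^\dagger$ and $Y^\dagger\mapsto Y$ (Proposition \ref{proposition:Filtrations-X,newX}) together with the data-processing inequality to get $\bH[P^{\dagger,1}[t]\,|\,P^{\dagger,2}[t]]=\bH[P^1[t]\,|\,P^2[t]]$ without needing any martingale property of the exponential; second, for the semimartingale laws $P^{\dagger,i}$ it invokes L\'eonard's Girsanov theory under a finite-entropy condition (\cite{Leonard2012Girsanov}*{Theorem 2.3}), which is designed precisely for the situation where both weak solutions exist and the drift difference is only almost surely finite / square-integrable in expectation. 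Either cite such a result or supply a localization argument showing the exponential local martingale is a true density; as written, the step ``standard Girsanov produces the density'' would fail under the stated hypotheses.
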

    Proposition \ref{fBmEntropy*}, which we generalise in Proposition \ref{fBmEntropy}, has proved to be highly valuable for proving the relative entropy between two stochastic processes. We apply this in both Section \ref{section:QPoC} and Section \ref{section:LocalEq}. 

    \subsubsection*{Talagrand inequalities}
    
    Let $\big(\cX, \cB(\cX) \big)$ be a measurable space, let $d:\cX \times \cE \to \bR^+$ be a measurable metric and let $\mu \in \cP(\cX)$. We say that $\mu \in \cT_d^1(\cX)$ with constant $\gamma$ if for every $\nu \in \cP(\cX)$
    \begin{equation}
        \label{eq:talagrand}
        \bW_d^{(p)}\big[ \nu, \mu \big] \leq \sqrt{\gamma \cdot \bH\big[ \nu \big| \mu \big]}, \quad \mbox{for all $\nu \in \cP(\cX)$. }
    \end{equation}
    We denote the set of measures that satisfy Equation \eqref{eq:talagrand} by $\cT_d^p(\cX)$. 

    That Gaussian measures are $\cT_2(\cX)$ and the associated links to concentration of measure are now well documented \cite{Talagrand1996Transportation}. Let $(\cX, \cH, \mu)$ be an abstract Wiener space. Then for any measure $\nu \in \cP(\cX)$, 
    \begin{equation}
        \label{eq:Talagrand96}
        \inf_{\pi \in \Pi(\mu, \nu)} \int_{\cX \times \cX} d_{\cH}\big( x, x' \big)^2 \pi[dx, dx'] \leq 2\bH\big[ \nu\big| \mu \big]. 
    \end{equation}
    A reliable method for establishing that a measure $\mu \in \cT_d^1(\cX)$ with constant $\gamma$ which was first  proved in \cite{Djellout2004Transportation}*{Theorem 2.3} involves establishing that for every $\delta \in (0, \tfrac{1}{4\gamma})$, 
    \begin{equation}
        \label{eq:theorem:Djellout}
        \int_\cX \int_\cX \exp\Big( \delta \cdot d(x, y)^2 \Big) d\mu(x) d\mu(y)< \infty. 
    \end{equation}
    Typically, we will prove that there exists some $\delta>0$ such that
    \begin{equation*}
        \int_\cX \exp\Big( \delta \cdot d(x, 0)^2 \Big) d\mu(x), 
    \end{equation*}
    which implies that Equation \eqref{eq:theorem:Djellout} holds for some appropriately large choice of $\gamma$. 

    \begin{proposition}
        \label{proposition:TalagrandIQfbm}
        Let $(\Omega, \cF, \bP)$ be Wiener space carrying a fractional Brownian motion with Hurst parameter $H \in (0,1)$. 

        Let $P_0 \in \cT^1(\bR^d)$, let $b : [0,T] \times \cC_T^d \to \bR^d$ be progressively measurable and suppose that there exists $M \in L^2\big( [0,T]; \bR \big)$ such that 
        \begin{equation}
            \label{eq:proposition:TalagrandIQfbm}
            \Big| Q^b\big( t, X[t] \big) \Big| \leq M_t \cdot \Big( 1 + \| X \|_{\infty, t} \Big) \quad \bP\mbox{-almost surely. }
        \end{equation}
        Then there exists a unique weak solution to the SDE
        \begin{equation*}
            dX_t = b\big(t, X[t] \big) dt + dZ_t, \quad t\in [0,T] \quad X_0 \sim P_0, \quad Z \sim \gamma,
        \end{equation*}
        and for every $t\in [0,T]$ the weak solution satisfies that $P[t] \in \cT_{\infty, t}^1\big( \cC_t^d \big)$. 
    \end{proposition}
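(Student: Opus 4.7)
My plan is to transport the problem to the Brownian-driven setting via the Volterra kernel $L$ from Equation \eqref{eq:fbm_Volterra}, obtain weak existence and uniqueness through a Girsanov construction on Wiener space, and then derive the Talagrand inequality through exponential moment control combined with the Djellout--Guillin--Wu criterion \eqref{eq:theorem:Djellout}.

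\emph{Weak existence and uniqueness.} I would start from the reference measure $P_0 \otimes \gamma$ on $\cC_T^d$, under which the canonical process satisfies $X_t = X_0 + Z_t$. Define the candidate $P$ by weighting $P_0 \otimes \gamma$ by the exponential density appearing in Proposition \ref{fBmEntropy*} (taken with $b^1 = b$ and $b^2 \equiv 0$),
\begin{equation*}
    \exp\bigg( \delta\Big( \int_0^\cdot K(\cdot, s) Q^b(s, X) ds \Big) - \tfrac{1}{2} \Big\| \int_0^\cdot b_s ds \Big\|_{\RKHS_T}^2 \bigg).
\end{equation*}
To verify this yields a true martingale, I would use the linear growth bound \eqref{eq:proposition:TalagrandIQfbm} on $Q^b$ together with Fernique's theorem for $\|Z\|_{\infty, T}$ under $\gamma$ and the exponential integrability of $|X_0|$ under $P_0$ coming from $P_0 \in \cT^1(\bR^d)$. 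These together give a Novikov-type bound on a short interval, which can then be chained over $[0,T]$. Weak uniqueness follows because any weak solution pulled back to $P_0 \otimes \gamma$ through the inverse Girsanov transform recovers the same density.

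\emph{Exponential moment estimate.} By the Djellout--Guillin--Wu criterion \eqref{eq:theorem:Djellout} applied to $d(x, y) = \|x - y\|_{\infty, t}$, to obtain $P[t] \in \cT^1_{\infty, t}(\cC_t^d)$ it suffices to show
\begin{equation*}
    \bE^P\Big[ \exp\big( \delta \cdot \|X\|_{\infty, t}^2 \big) \Big] < \infty \quad \mbox{for some $\delta > 0$. }
\end{equation*}
I would decompose $\|X\|_{\infty, t} \leq |X_0| + \sup_{s \leq t} \big| \int_0^s b_u du \big| + \|Z\|_{\infty, t}$. The fBm term is handled by Fernique, and the initial condition by the Talagrand property of $P_0$. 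For the drift-integral term, invert the Volterra relation, as in Theorem \ref{theorem:fbm-Mimicking}, to express $\int_0^s b_u du$ through $\int_0^s K(s, u) Q^b(u, X[u]) du$ plus a Brownian stochastic-integral remainder, then apply the linear bound $|Q^b(u, X[u])| \leq M_u(1 + \|X\|_{\infty, u})$ and close a Gr\"onwall-type iteration on $u \mapsto \bE^P[\exp(\delta \|X\|_{\infty, u}^2)]$. The $L^2$-integrability of $M$ and the integrability of $K(s, \cdot)$ are what allow the iteration to propagate.

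\emph{Main obstacle.} The chief difficulty is the exponential moment bound: the hypothesis is stated on $Q^b$, the conditionally-expected transformed drift, rather than on $b$ itself. Passing back through the kernel $K(t, s)$, which carries an integrable singularity near $s = t$ (particularly for $H < \tfrac{1}{2}$), requires careful $L^2$- and RKHS-level estimates so that the Gr\"onwall iteration closes with strictly positive $\delta$ and the resulting constants remain compatible with the Djellout--Guillin--Wu threshold $\delta < \tfrac{1}{4\gamma}$. Controlling the exponential moments of the stochastic-integral remainder with singular Volterra kernel, rather than a smooth Brownian one, is the other technical subtlety that I would expect to absorb most of the work.
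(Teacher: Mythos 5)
Your overall strategy coincides with the paper's: reduce $P[t]\in\cT^1_{\infty,t}$ to the exponential square-integrability criterion \eqref{eq:theorem:Djellout}, decompose $\|X\|_{\infty,t}$ into initial condition, drift integral and noise, and handle the last two by Gr\"onwall and Fernique. (The paper does not reprove weak existence/uniqueness here; it cites \cite{Hu2023Locally}, so your Girsanov sketch is extra work rather than a point of comparison.) Two steps in your plan, however, do not go through as written. First, there is no ``Brownian stochastic-integral remainder'' when you invert the Volterra relation: by Equation \eqref{eq':lemma:Existence_Q} the identity $\int_0^t b_s\,ds=\int_0^t K(t,s)Q^b_s\,ds$ is exact, and moreover $\bigl\|\int_0^\cdot b_s\,ds\bigr\|_{\RKHS_t}=\bigl(\int_0^t|Q^b_s|^2\,ds\bigr)^{1/2}$. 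Combined with the continuous embedding of $\RKHS_t$ into $(\cC_t^d,\|\cdot\|_{\infty,t})$ (Cauchy--Schwarz against $K(s,\cdot)$, using $\sup_s R(s,s)<\infty$), this means the sup norm of the drift integral is controlled directly by $\int_0^t|M_s|^2(1+\|X\|_{\infty,s})^2\,ds$; the singularity of $K$ near the diagonal, which you identify as the main obstacle, never has to be confronted. Second, Gr\"onwall cannot be closed on $u\mapsto\bE^P[\exp(\delta\|X\|_{\infty,u}^2)]$ as you propose, because the resulting functional inequality is not linear in that quantity. The paper instead runs Gr\"onwall \emph{pathwise} on the linear inequality
\begin{equation*}
    \|X\|_{\infty,t}^2 \leq 3\Bigl(|X_0|^2+\|Z\|_{\infty,t}^2+2\int_0^t|M_s|^2\,ds\Bigr)+6\int_0^t|M_s|^2\,\|X\|_{\infty,s}^2\,ds,
\end{equation*}
obtains a $\bP$-a.s.\ bound $\|X\|_{\infty,t}^2\leq 3\bigl(|X_0|^2+\|Z\|_{\infty,t}^2+2\int_0^t|M_s|^2ds\bigr)\exp\bigl(2\int_0^t|M_s|^2ds\bigr)$, and only then exponentiates and invokes Fernique together with $P_0\in\cT^1(\bR^d)$. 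If you insist on working at the level of exponential moments you would need the Jensen-plus-short-time-interval device the paper uses in Proposition \ref{proposition:Talagrand2}; here the pathwise route is both sufficient and simpler.
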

    Again, we prove Proposition \ref{proposition:TalagrandIQfbm} for a more general class of additive Gaussian processes in Proposition \ref{proposition:TalagrandIQ}. Working with measures that satisfy Equation \eqref{eq:talagrand} will be key to the techniques used in Sections \ref{section:QPoC} and \ref{section:LocalEq}. However, finding drift terms that satisfy \eqref{eq:proposition:TalagrandIQfbm} can be challenging:
    \begin{example}
        \label{example:Q}
        When $H<\tfrac{1}{2}$, suppose that there exists $M:[0,T] \to \bR$ such that
        \begin{align}
            \label{eq:example:Q-lin}
            &\Big| b\big(t, X[t] \big) \Big| \leq M_t \cdot \Big( 1 + \big\| X \big\|_{\infty, t} \Big)
            \quad \mbox{and}\quad
            \int_0^T \int_0^t (t-s)^{-(\tfrac{1}{2}+H)} |M_s| ds |M_t| dt< \infty. 
        \end{align}
        In particular, if $b$ satisfies a uniform in time linear growth condition then Equation \eqref{eq:example:Q-lin} follows. By adapting the techniques first detailed in \cite{Nualart2002Regularization}, we observe that
        \begin{align*}
            \Big| Q^b\big(s, X \big) \Big| =& \Big| s^{H-\tfrac{1}{2}} \cdot \int_0^s (s-r)^{-(\tfrac{1}{2}+H)} r^{\tfrac{1}{2}-H} b\big(r, X\big) dr \Big|
            \\
            \leq& \bigg( s^{H-\tfrac{1}{2}} \int_0^s (s-r)^{-\tfrac{1}{2}-H} r^{\tfrac{1}{2}-H} |M_r| dr \bigg) \cdot \Big( 1+ \big\| X \big\|_{\infty, s} \Big). 
        \end{align*}
        Further, by direct calculation we obtain that
        \begin{align*}
            \int_0^T \bigg| s^{H-\tfrac{1}{2}}& \int_0^s (s-r)^{-\tfrac{1}{2}-H} r^{\tfrac{1}{2}-H} |M_r| dr \bigg|^2 ds
            =
            \int_0^T s^{2H-1} \bigg( \int_0^s (s-r)^{-\tfrac{1}{2}-H} r^{\tfrac{1}{2}-H} \big| M_r \big| dr \bigg)^2 ds
            \\
            \leq& \int_0^T \int_0^r (r-u)^{-(\tfrac{1}{2}+H)} |M_u| |M_r| du dr < \infty. 
        \end{align*}
        As such, Equation \eqref{eq:proposition:TalagrandIQfbm} holds. 

        Now suppose that $H>\tfrac{1}{2}$. Consider a function of the form
        \begin{equation}
            \label{eq:example:Q-lin-}
            \left.
            \begin{aligned}
                &b\big(s, x[s] \big):=\overline{b}\big(s, x_s \big)
                \quad\mbox{where} \quad
                \overline{b}:[0,T] \times \bR^d \to \bR^d
                \quad \mbox{and}
                \\
                &\Big| \overline{b}\big(s, x\big) - \overline{b}\big(t, y\big) \Big| \leq C \Big( |t-s|^{\gamma} + |x - y|^{\alpha} \Big)
                \quad \mbox{for}\quad
                \alpha \in (1-\tfrac{1}{2H}, 1), \quad \gamma > H-\tfrac{1}{2}. 
            \end{aligned}
            \quad \right\}
        \end{equation}
        Again, by adapting the techniques first detailed in \cite{Nualart2002Regularization} we have that
        \begin{align*}
            \Big| Q^b\big(s, x[s] \big) \Big| 
            =& 
            \overline{b}(s, x_s) s^{\tfrac{1}{2}-H} 
            + (H-\tfrac{1}{2}) s^{H-\tfrac{1}{2}} \overline{b}(s, x_s) \int_0^s \Big( s^{\tfrac{1}{2}-H} - r^{\tfrac{1}{2}-H} \Big) (s-r)^{-(\tfrac{1}{2}+H)}dr 
            \\
            &+ (H-\tfrac{1}{2}) s^{H-\tfrac{1}{2}} \int_0^s \Big( \overline{b}(s, x_s) - \overline{b}(r, x_s) \Big) r^{\tfrac{1}{2}-H} (s-r)^{-(\tfrac{1}{2}+H)} dr
            \\
            &+ (H-\tfrac{1}{2}) s^{H-\tfrac{1}{2}} \int_0^s \Big( \overline{b}(r, x_s) - \overline{b}(r, x_r) \Big) (s-r)^{-(\tfrac{1}{2}+H)} r^{\tfrac{1}{2}-H} dr. 
        \end{align*}
        By applying Equation \eqref{eq:example:Q-lin-}, we conclude that for some $\varepsilon>0$ chosen small enough 
        \begin{align*}
            \Big| Q^b\big(s, x[s] \big) \Big| 
            \leq& 
            C_T s^{\tfrac{1}{2}-H} \Big( |x_0| + \big\| x\|_{\infty}^{\alpha} + |s|^{\gamma} + \big| \overline{b}(0, 0)\big| + \big\| x \big\|_{H-\varepsilon}^{\alpha} s^{\alpha(H-\varepsilon)} \Big). 
        \end{align*}
    \end{example}

	\subsection{McKean-Vlasov dynamics}

    Let $b_0: [0,T] \times \cC_T^d \rightarrow \bR^d$ and $b: [0,T] \times \cC_T^d \times \cC_T^d \rightarrow \bR^d$ be progressively measurable. First, consider the system of $n$-interacting (path-dependent) equations given by
	\begin{equation}
		\label{fBmParticleSystem}
		dX_t^{i,n} = \bigg( b_0 \big(t, X^{i,n}[t] \big) + \tfrac{1}{n-1}\sum_{\substack{j=1 \\ j \neq i}}^n b\big( t, X^{i,n}[t], X^{j,n}[t] \big) \bigg) dt + dZ_t^{i,n}, \quad i=1, ..., n.
	\end{equation}
	where $(Z_t^i)_{i=1, ..., n}$ is a family of exchangeable fractional Brownian motions. Let $P^{(n)} \in \cP\big( (\cC_T^d)^{\times n} \big)$ denote the law the entire system $\rx^n[T] = (X^i[T])_{i=1, ..., n}$ and $P^{(n, k)} \in \cP\big( (\cC_T^d)^{\times k} \big)$ be the marginal for the random variable $\rx^{n, k}[T] = \big( X^{i}[T] \big)_{i=1, ..., k}$ of the entire system $\rx^{n}[T]$. 
 
    The dynamics of systems of interacting equations such as Equation \eqref{fBmParticleSystem} are often modelled via the dynamics of the associated (path-dependent) McKean-Vlasov equation 
	\begin{equation}
		\label{fBmMV}
        dX_t = \bigg( b_0\big( t, X[t] \big) + \Big\langle \mu[t], b\big( t, X[t], \cdot \big) \Big\rangle \bigg) dt + dZ_t 
        \quad 
        \mu[t] = \bP \circ \big( X[t] \big)^{-1}. 
	\end{equation}
    Recent works studying such distributional dynamics driven additive noice include \cites{Han2022Solving,Galeati2022Distribution, Galeati2023Distribution,lacker2022Hierarchies}. 

    It is now well known that under various sets of assumptions on $b_0$ and $b$, as $n\to \infty$ the empirical distribution
	\begin{equation}
        \label{eq:ConvEmp-McV}
		\frac{1}{n} \sum_{i=1}^n \delta_{X^{i,n}[t]} \xrightarrow{\bP} \mu[t]. 
	\end{equation}
    Further, \eqref{eq:ConvEmp-McV} is equivalent (see \cite{Sznitman}) to the alternative limit
    \begin{equation}
        \label{eq:ConvEmp-McV:local}
        P^{(n,k)} \to \mu^{\otimes k}[t]. 
    \end{equation}
    Results of the form \eqref{eq:ConvEmp-McV} are referred to as \emph{global chaos} whereas results of the form \eqref{eq:ConvEmp-McV:local} are referred to as \emph{local chaos}. More focus has been paid to studying the \emph{global rate of convergence}, but the contribution of this paper is to prove a sharp upper bound of the \emph{local rate of convergence}. 

    \subsubsection*{Transform to the fundamental semimartingale}

    The dynamics of \eqref{fBmParticleSystem} and \eqref{fBmMV} can be transformed via a convolution with the Volterra kernel \eqref{eq:fbm_Volterra} into the stochastic process
    \begin{align*}
        X_t^{\dagger,i, n} =& X_0^{i, n} + \int_0^t L(t, s) dX_s^{i, n} 
        \\
        =& X_0^{i, n} + \int_0^t Q^{b_0}\big (s, X^{i, n}[s] \big)ds + \tfrac{1}{n-1} \sum_{\substack{j=1 \\ j\neq i}}^n \int_0^t Q^{b}\big( s, X^{i, n}[s], X^{j, n}[s] \big) ds + W_t^{i, n}
    \end{align*}
    and
    \begin{align*}
        X_t^{\dagger} =& X_0 + \int_0^t L(t, s) dX_s
        = X_0 + \int_0^t Q^{b_0}\big (s, X[s] \big)ds + \int_0^t \Big\langle \mu[s],  Q^{b}\big( s, X[s], \cdot \big) \Big\rangle ds + W_t
    \end{align*}
    where for any progressively measurable function $b: [0,T] \times \cC_T^d \to \bR^d$ we define $Q^b:[0,T] \times \cC_T^d \to \bR^d$ by
    \begin{equation}
    \label{eq:Q-transform_fbm}
        Q^b\big( t, x[t] \big) := (\tfrac{1}{2} - H) t^{H-\tfrac{1}{2}} \int_0^t (t-s)^{-(\tfrac{1}{2}+H)} b\big( s, x[s] \big)  s^{\tfrac{1}{2}-H} ds. 
    \end{equation}
    This choice of definition is motivated by the fact that
    \begin{equation*}
        \int_0^t Q^b\big( s, x[s] \big) ds = \int_0^t L(t, s) b\big( s, x[s] \big) ds
    \end{equation*}
    where the Volterra kernel $L:[0,T] \to \fWIC$ was defined in Equation \eqref{eq:fbm_Volterra}. 

    \subsubsection*{Quantitative Propagation of chaos}

    By transforming to the fundamental martingale and establishing key entropy estimates, we are able to adapt the techniques of \cite{lacker2022Hierarchies} to conclude the following:
    \begin{theorem}
		\label{qPropChaosfBM}
	    Suppose that there exists a  weak solution to the stochastic differential equation \eqref{fBmParticleSystem} whose law $P^n \in \cP\big( \cC_T^d \big)$ is exchangeable and suppose that there exists a weak solution to the McKean-Vlasov equation \eqref{fBmMV} with law $\mu \in \cP(\cC_T^d)$. 

        Further, suppose that 
        \begin{enumerate}  
			\item Square integrability:
			\begin{equation}
				\sup_{n\in \bN} \int_{(\cC_T^d)^{\times 2}} \bigg( \int_0^T \Big| Q^b\big(s, x[s], y[s] \big) - \Big\langle \mu, Q^b\big(s,x[s], \cdot \big) \Big\rangle \Big|^2 ds \bigg) P^{(n, 2)}(dx, dy) = M < \infty.
			\end{equation}
			\item Transport inequality. Let $\mu$ be the law of the solution of \eqref{fBmMV}. Then there exists a $\gamma >0$ such that for all $t \in [0,T]$, $x \in \cC_T^d$, and $\nu \in  \cP(\cC_T^d)$ such that $Q_t^b\big( x[t], \cdot \big) \in L^1(d\nu)$, we have
			\begin{equation}
				\bigg| \Big\langle \nu[t] - \mu[t], Q_t^b\Big( x[t], \cdot \Big) \Big\rangle \bigg|^2 
                \leq 
                \gamma\cdot \bH\big[ \nu[t] \big| \mu[t] \big].
			\end{equation}
			\item There exists $C_0>0$ such that
			\begin{equation}
				\bH\big[ P_0^{(n,k)} \big| \mu_0^{\otimes k} \big] \leq C_0 \frac{k^2}{n^2}.
			\end{equation}
		\end{enumerate}
        
		Then, there exists $C_H$ depending on the Hurst parameter $H$ such that for large enough $n$, $k \in \{1, \ldots n\}$, we have
		\begin{equation}
			\bH\big[ P^{(n, k)}[t] \big| \mu^{\otimes k}[t] \big] \leq 2C_H \frac{k^2}{n^2} + C \exp\bigg( - 2n(e^{-\gamma T} - \frac{k}{n})^2_+ \bigg).
		\end{equation}
	\end{theorem}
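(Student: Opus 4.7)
The plan is to adapt the hierarchy method of \cite{lacker2022Hierarchies} to the fractional Brownian setting, using the mimicking theorem (Theorem \ref{brownianFilteringThm}) and the fBM entropy formula (Proposition \ref{fBmEntropy*}) to derive a differential hierarchy for $H^{n,k}(t) := \bH[P^{(n,k)}[t] \mid \mu^{\otimes k}[t]]$. The target inequality is
\begin{equation*}
    \frac{d}{dt}H^{n,k}(t) \leq C_H\gamma\cdot\tfrac{n-k}{n-1}\big(H^{n,k+1}(t)-H^{n,k}(t)\big) + C_H\cdot\tfrac{k^2}{(n-1)^2}\,M,
\end{equation*}
which is then resolved top-down from $k=n$ via an integrating factor argument.

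First, apply Theorem \ref{brownianFilteringThm} to the $k$-marginal of the Volterra-transformed particle system: the mimicked drift on the $i$-th coordinate is, by exchangeability,
\begin{equation*}
    \tilde Q^{n,k}_i(s) = Q^{b_0}(s,X^{i,n}[s]) + \tfrac{1}{n-1}\!\sum_{j\leq k,\,j\neq i}\! Q^b(s,X^{i,n}[s],X^{j,n}[s]) + \tfrac{n-k}{n-1}\bE\big[Q^b(s,X^{i,n}[s],X^{k+1,n}[s])\mid X^{(n,k)}[s]\big],
\end{equation*}
while the product McKean--Vlasov system has drift $Q^{b_0}(s,X^i[s]) + \langle\mu[s],Q^b(s,X^i[s],\cdot)\rangle$ on each coordinate. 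Applying Proposition \ref{fBmEntropy*} in the $k$-dimensional Brownian-driven setting gives
\begin{equation*}
    H^{n,k}(t) = H^{n,k}(0) + \tfrac12\sum_{i=1}^k \bE^{P^{(n,k)}}\!\bigg[\int_0^t\big|\tilde Q^{n,k}_i(s) - Q^{b_0}(s,X^i[s]) - \langle\mu[s],Q^b(s,X^i[s],\cdot)\rangle\big|^2\,ds\bigg].
\end{equation*}

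The drift difference splits into a leave-one-out interaction $\tfrac{1}{n-1}\sum_{j\leq k,j\neq i}(Q^b - \langle\mu,Q^b\rangle)$, controlled by assumption~(1) after exchangeability-driven cancellation of cross-terms yielding the $O(k^2/(n-1)^2)M$ contribution; and a missing-mass term $\tfrac{n-k}{n-1}(\bE[Q^b\mid X^{(n,k)}]-\langle\mu,Q^b\rangle)$, controlled by applying the transport inequality assumption~(2) \emph{conditionally} to the random measure $P(X^{k+1}[s]\in\cdot\mid X^{(n,k)}[s])$. Integrating under $P^{(n,k)}$ and invoking the chain rule
\begin{equation*}
    \bE^{P^{(n,k)}}\!\big[\bH\big[P(X^{k+1}[s]\in\cdot\mid X^{(n,k)}[s])\,\big|\,\mu[s]\big]\big] = H^{n,k+1}(s) - H^{n,k}(s)
\end{equation*}
converts this into the hierarchy coupling term.

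The main obstacle is quantitatively resolving the hierarchy to extract the sharp rate. Solve the differential inequality with integrating factor $\exp(\gamma t(n-k)/(n-1))$; iterating upward in $k$ produces a sum weighted by binomial coefficients $\binom{n-k}{m}e^{-\gamma T m}(1-e^{-\gamma T})^{n-k-m}$. The interior of this binomial sum yields the main $O(k^2/n^2)$ term, while the tail near $m = n-k$ --- where one only has a linear-in-$n$ bound on $H^{n,n}(t)$ from assumption~(3) together with the total $L^2$-drift contribution over $[0,t]$ --- produces the Chernoff remainder $C\exp(-2n(e^{-\gamma T}-k/n)_+^2)$ via Hoeffding's inequality applied to a $\mathrm{Binomial}(n-k,e^{-\gamma T})$ tail. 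The Hurst-dependent constant $C_H$ encodes the comparison between the $\RKHS$-norm and the $L^2$-norm of the Volterra-transformed drift implicit in Proposition \ref{fBmEntropy*}, and is the one place where the fBM setting genuinely departs from the Brownian case treated in \cite{lacker2022Hierarchies}.
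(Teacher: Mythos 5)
Your overall architecture matches the paper's: convolve with the Volterra kernel $L$ to pass to the fundamental semimartingale, apply Theorem \ref{brownianFilteringThm} to the $k$-marginal to obtain exactly the drift decomposition you write down, compute $\bH[P^{(n,k)}[t]\,|\,\mu^{\otimes k}[t]]$ via Proposition \ref{fBmEntropy}, handle the missing-mass term with the conditional transport inequality plus the entropy chain rule, and resolve the resulting hierarchy by the iteration of \cite{lacker2022Hierarchies} (Lemma \ref{Lemma:Lacker-ODE}), with the terminal value $\bH_T^{(n,n)}=O(n)$ supplied by the global estimate of Proposition \ref{proposition:GlobelEnt}, which you correctly anticipate.

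The genuine gap is in your treatment of the leave-one-out term. You assert it contributes $O(k^2/(n-1)^2)M$ ``after exchangeability-driven cancellation of cross-terms''. Exchangeability does not cancel anything here: expanding the square of $\tfrac{1}{n-1}\sum_{j\le k,\,j\ne i}\big(Q^b(s,X^{i,n},X^{j,n})-\langle\mu,Q^b(s,X^{i,n},\cdot)\rangle\big)$ produces $(k-1)(k-2)$ off-diagonal terms per coordinate, all equal by exchangeability to the \emph{single} cross-expectation $\bE\big[(Q^b(s,X^{1,n},X^{2,n})-\langle\mu,\cdot\rangle)(Q^b(s,X^{1,n},X^{3,n})-\langle\mu,\cdot\rangle)\big]$, which is not zero; it is small only insofar as $P^{(n,3)}$ is close to $\mu^{\otimes 3}$. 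The crude bound gives only $k(k-1)^2TM/(n-1)^2=O(k^3/n^2)$. The paper's proof is therefore a two-pass bootstrap: first run the hierarchy with the crude input to obtain $\bH_T^{(n,3)}\le \hat C/n^2$; then rewrite the cross-expectation as $\bE[(\cdot)\,\langle P^{(n,3|2)}-\mu,Q^b\rangle]$ and use Cauchy--Schwarz, the transport inequality and the chain rule to bound it by $\sqrt{M\gamma(\bH_s^{(n,3)}-\bH_s^{(n,2)})}\le\sqrt{M\gamma\hat C}/n$, which upgrades the leave-one-out term to $O(k^2/n^2)$; only then is the hierarchy rerun to yield the stated rate. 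Without this bootstrap your argument proves only a $k^3/n^2$ bound. Two smaller points: your displayed differential inequality drops the factor $k$ coming from the sum over the $k$ coordinates (the coupling coefficient is $\gamma k(n-k)^2/(n-1)^2\le\gamma k$ and the integrating factor is $e^{-\gamma kt}$; that factor $k$ is what generates the products $\prod_j\gamma j$ in $A_k^l$, $B_k^l$ and hence the Hoeffding-type remainder), and the identity \eqref{eq':lemma:Existence_Q} makes $\|\int_0^\cdot(b^1-b^2)_s\,ds\|_{\RKHS_t}^2=\int_0^t|Q^{b^1}_s-Q^{b^2}_s|^2ds$ exact, so no $H$-dependent comparison constant enters through Proposition \ref{fBmEntropy}; the Hurst dependence is already encoded in the assumptions via $Q^b$.
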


    \subsection{Marginal dynamics on $\kappa$-regular trees}

    Dynamics of the form described in Equation \eqref{fBmParticleSystem} can be rewritten for every $i =1, ..., n$ as
    \begin{equation*}
        dX_t^{i, n} = \bigg( b_0\big( t, X^{i, n}[t] \big) + \Big\langle \mu^{i, n}[t], b\big( t, X^{i, n}[t], \cdot \big) \Big\rangle \bigg) dt + dZ_t^{i, n}, 
        \quad \mu^{i, n}[t] = \tfrac{1}{n-1}\sum_{\substack{j=1 \\ j\neq i}}^n \delta_{X^{j, n}[t]}
    \end{equation*}
    where the (random) empirical measure $\mu^{i, n}[t] \in \cP_2(\cC_t^d)$. It is only through the dependency on the dynamics on the empirical measure $\mu^{i, n}[t]$ that the random variables $X^{i, n}$ and $X^{j, n}$ exhibit any correlation. Of course, we expect that as the number of interacting equations is taken to be very large, the empirical measure $\mu^{i, n}$ will converge $\bP$-almost surely to the (deterministic) mean-field limit $\mu[t]$ from Equation \eqref{fBmMV} and each equation will decouple from all other equations. However, for fixed $n\in \bN$ each SDE remains dependent on the dynamics of all other equations, a state that we visualise with the complete graph (where each vertex represents an SDE and each edge represents dynamic dependency). Complete graphs contain all possible edges between any two vertices. 
    
    In contrast to this, consider a collection of interacting SDEs represented by some sparse graph $(V, E)$. Informally, sparse graphs contain only a small proportion of the total number of possible edges. For each $u \in V$, we denote $N_u = \{v \in V: (u, v) \in E\}$ the neighbourhood of the vertex $u$. 

    As before, let $b_0:[0,T] \times \cC_T^d \to \bR^d$ and $b: [0,T] \times \cC_T^d \times \cC_T^d$ be progressively measurable functions. Now consider the system of (path-dependent) stochastic differential equations given by
    \begin{equation}
    	\label{eq:local-interaction}
        \left.
    	\begin{aligned}
        &dX_t^{u} = \bigg( b_0\big( t, X^u[t]\big) + \Big\langle \mu^u\big( X[t] \big), b\big(t, X^u[t], \cdot \big) \Big\rangle \bigg) dt + dZ_t^u
        \\
        &\mu^u\big( X[t] \big) = \tfrac{1}{|N_u|} \sum_{v\in N_u} \delta_{X^u[t]}. 
        \end{aligned}
        \right\}
        \quad u \in V
    \end{equation}
    For simplicity, let us turn our attention here to the case where the graph $(V, E)$ is the $\kappa$-regular graph. When $\kappa=2$, this corresponds to the graph with vertices indexed by the integers on the real line with edges corresponding to each integers nearest neighbours. Hence, for each integer $u \in \bZ$, the dynamics of the SDE $X^u$ are dependent on the dynamics of both $X^{u+1}$ and $X^{u-1}$. Such locally interacting dynamics are of increased interest in recent years due to their applications in opinion dynamics within social media, contagion modelling and the simulation of SPDEs from mathematical physics. For an incomplete list of references, we refer the reader to \cites{lacker2020marginal, Chatterjee2009Contact, Delattre2016Note, Bhamidi2021Survival}. 

    We could approximate the solution of this countably infinite collection of interacting SDEs via a hydrodynamic limit by considering the dynamics corresponding to a finite local neighbourhood of the $\kappa$-regular graph, but to get an accurate approximation we would need to take the finite local neighbourhood to be large. 

    Instead, our focus is on providing a characterisation for the marginal dynamics of some small local neighbourhood of the $\kappa$-regular graph. In the case where $\kappa=2$, we might be interested in the dynamics of the triple $(X^{-1}, X^{0}, X^{1})$ taking values in $(\cC_T^d)^{\times 3}$. 
    \begin{theorem}
    	\label{theorem:localEquation-fbm}
        Suppose that there exists a weak solution to the (countably infinite) collection of stochastic differential equations \eqref{eq:local-interaction}. 

        Let
        \begin{equation*}
			\gamma\Big( t, X^0[t], X^1[t] \Big) = \bE\bigg[ Q^b\Big( t, X^0[t],  \mu^0\big(X[t] \big) \Big) \bigg| X^0[t], X^1[t] \bigg]
			\end{equation*}
        Then there exists a unique weak solution to the stochastic differential equation
        \begin{equation}
            \label{eq:localEq-fbm}
            \left.
            \begin{aligned}
                X_t^{-1} =& X_0^{-1} + \int_0^t b_0\big(s, X^{-1}[s] \big) ds + \int_0^t K(t, s) \gamma\big(s, X^{-1}[s], X^{0}[s] \big) ds + Z_t^{-1}
                \\
                X_t^{0} =& X_0^{0} + \int_0^t b_0\big(s, X^{0}[s] \big) ds + \int_0^t b\Big(s, X^{0}[s], \mu^{0}\big( X[s] \big) \Big) ds + Z_t^{0}
                \\
                X_t^{1} =& X_0^{1} + \int_0^t b_0\big(s, X^{1}[s] \big) ds + \int_0^t K(t, s) \gamma\big(s, X^{1}[s], X^{0}[s] \big) ds + Z_t^{1}
            \end{aligned}
            \right\}
        \end{equation}
        Further, the weak solution to Equation \eqref{eq:localEq-fbm} agrees with the marginal $P^{\{-1, 0, 1\}}$ of the weak solution to Equation \eqref{eq:local-interaction}. 
    \end{theorem}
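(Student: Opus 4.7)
The plan is to split the theorem into an existence half, which identifies the marginal $P^{\{-1, 0, 1\}}$ of any weak solution of the full locally interacting system \eqref{eq:local-interaction} as a weak solution of the local equation \eqref{eq:localEq-fbm}, and a uniqueness half, which closes a Gr\"onwall estimate in relative entropy. The geometric observation driving the whole argument is that for the triple $\{-1, 0, 1\}$ the neighbourhood $N_0 = \{-1, 1\}$ is contained in the observation, so the equation for $X^0$ should be kept in the form of \eqref{eq:local-interaction}, while the equations for $X^{-1}$ and $X^1$ have drifts that involve the unobserved vertices $-2$ and $2$, and it is precisely these two drifts that must be projected.

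For existence, I would start from a weak solution $(X^u)_{u \in V}$ to \eqref{eq:local-interaction} and apply the Volterra convolution $L(t,s)$ component-wise to obtain the fundamental semimartingales $X^{u,\dagger}_t = \int_0^t L(t,s)\, dX^u_s$, whose drifts are of the form $\int_0^t Q^{b_0}(s, X^u[s])\, ds + \int_0^t Q^b(s, X^u[s], \mu^u(X[s]))\, ds$ by \eqref{eq:Q-transform_fbm}. Projecting the $u = \pm 1$ drifts onto $\sigma(X^{-1}[s], X^0[s], X^1[s])$ invokes the 2-Markov random field property of \cite{Hu2023Locally}: the screening of $X^{-2}$ from $X^1$ by $X^0$ (and symmetrically for $X^2$) collapses the conditioning sigma-field to $\sigma(X^{-1}[s], X^0[s])$ (respectively $\sigma(X^0[s], X^1[s])$), and translation invariance of the $\kappa$-regular graph identifies these two projections with the function $\gamma$ of the theorem statement, evaluated at the relevant pair. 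Applying the mimicking Theorem~\ref{brownianFilteringThm} to the projected $u=\pm 1$ semimartingales and inverting the transformation via $K(t,s)$ produces the integrals $\int_0^t K(t,s)\, \gamma(s, \cdot)\, ds$ appearing in \eqref{eq:localEq-fbm}; because the $b_0$ term is already measurable with respect to the smaller sigma-field, its inverse Volterra transformation returns an unconvoluted $\int_0^t b_0(s, X^u[s])\, ds$. The $u = 0$ equation is kept unchanged, since $\mu^0(X[t])$ is already a function of the triple.

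For uniqueness, given two weak solutions $P, P'$ of \eqref{eq:localEq-fbm} I would use the explicit Radon--Nikodym density of Proposition~\ref{fBmEntropy} to express $\bH[P[t]\,|\, P'[t]]$ as an integral of the squared $\RKHS$-norm of the difference of the two drifts. The $X^0$-coordinate contributes the usual path-dependent McKean--Vlasov-type term, controllable by Lipschitz regularity of $b$ together with the transport estimate of Proposition~\ref{proposition:TalagrandIQ}. The central difficulty is the $X^{\pm 1}$-coordinates: the difference $\gamma_P - \gamma_{P'}$ is a difference of two conditional expectations, to which classical Lipschitz estimates on $b$ do not transfer. Here I would invoke the conditional Talagrand inequality of Proposition~\ref{proposition:Talagrand2} to bound this $L^2$-difference by the relative entropy of the conditional laws of $X^0$ given $(X^{-1}, X^1)$ under $P$ and $P'$. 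A Gr\"onwall argument on $t \mapsto \bH[P[t]\,|\,P'[t]]$ then forces it to be zero, giving uniqueness. The main obstacle is precisely the control of $\gamma$, since the conditional expectation destroys any Lipschitz structure inherited from $b$, and the entire uniqueness argument hinges on establishing and correctly deploying the conditional transport inequality; a secondary technical point is verifying that the 2-Markov random field property of the original system descends to the Volterra-transformed drifts in the form needed to identify $\gamma$ after projection.
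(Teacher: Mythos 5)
Your proposal follows essentially the same route as the paper: existence via the Volterra transform to the fundamental semimartingale, projection of the drifts at the vertices $\pm 1$, the 2-MRF and symmetry properties to collapse the conditioning to the pair and identify $\gamma$, and the mimicking Theorem~\ref{brownianFilteringThm}; uniqueness via the entropy formula of Proposition~\ref{fBmEntropy}, the conditional Talagrand inequality of Proposition~\ref{proposition:Talagrand2}, the chain rule for relative entropy, and Gr\"onwall. One small correction: the $X^0$-coordinate contributes \emph{nothing} to the drift difference in the entropy estimate, since its drift $b\big(s, X^0[s], \tfrac{1}{2}(\delta_{X^{-1}[s]}+\delta_{X^{1}[s]})\big)$ is the same fixed measurable function of the observed triple under both candidate laws (it is an empirical-measure, not a law-dependent, term), so only the $\pm 1$ coordinates enter the Gr\"onwall loop.
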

    
	\section{Gaussian processes and the fundamental Martingale}
	\label{subsec:Filtering}
	
	For a useful introduction to some of the ideas of \emph{calculus of variations for Gaussian processes} (also called Malliavin calculus), we refer the reader to \cite{nualart2006malliavin}. 

    In this section, we explore how a Gaussian process can be ``diagonalised'' or ``inverted'' to a martingale. The existence of such a transformation is not necessary for any Gaussian process, but we will only consider Gaussian processes for which such a transformation exists. 

    We want to work with a class of Gaussian Volterra processes that can be transformed in a measurable bijecting way into a Brownian motion. 
	
	\subsection{Gaussian Volterra processes}
 
	\label{subsection:GVP}

	Let us start by considering a class of Gaussian processes that can be represented as a measurable transformation of a Brownian motion:
    \begin{definition}
        Let $d\in \bN$ and let $(\Omega, \cF, \bF, \bP)$ be a complete filtered probability space carrying a $d$-dimensional Brownian motion. 

        Let $K:[0,T] \to L^2\big( [0,T]; \lin(\bR^d, \bR^d) \big)$ be a Volterra kernel (that is $K(t, s) = 0$ for $s>t$). We say that a stochastic process is a \emph{Gaussian Volterra process} if it can be expressed as
        \begin{equation}
    		\label{eq:GaussVolterra}
    		Z_t = \int_0^t K(t, s) dW_s
	    \end{equation}
        where $W$ is a Brownian motion and $K$ is a Volterra kernel. 
    \end{definition} 
    In particular, this means that the covariance function for a Gaussian processes of the form Equation \eqref{eq:GaussVolterra} satisfy
	\begin{equation}
        \label{eq:R-covariance}
		R(s, t) = \bE\Big[ \big\langle Z_t , Z_s \big\rangle \Big] = \int_0^{t \wedge s} \Big\langle K(t, u), K(s, u)\Big\rangle_{\lin(\bR^d, \bR^d)} du. 
	\end{equation}

    In order to illustrate the key properties of the Gaussian processes that we consider for driving signals, we need to consider three distinct Hilbert spaces:
    \begin{enumerate}[label=(\roman*)]
        \item The \emph{reproducing kernel Hilbert space} is the closure of the collection of covariance functions 
        \begin{align*}
            &\RKHS_T:=\spn\bigg\{ \bE\Big[ \langle Z_t, u\rangle_{\bR^d} Z_{\cdot} \Big]: t\in [0,T], u \in \bR^d \bigg\}
            \\
            &\mbox{with inner product} \quad 
            \Big\langle \bE\big[ \langle Z_t, u\rangle Z_{\cdot} \big], \bE\big[ \langle Z_s, v\rangle Z_{\cdot} \big] \Big\rangle_{\RKHS_T} = \bE\Big[ \langle Z_t, u\rangle  \cdot \langle Z_s, v \rangle \Big]. 
        \end{align*}
        \item The \emph{first Wiener-Ito chaos} is the closure of the collection of step functions
        \begin{align*}
            &\fWIC_T:=\spn\bigg\{ \1_{[0,t]}(\cdot) e_{i,i}: t\in [0,T], i\in \{1, ..., d\} \bigg\}
            \\
            &\mbox{with inner product}\quad
            \Big\langle \1_{[0,t]} e_{i,i}, \1_{[0,s]} e_{j,j} \Big\rangle_{\fWIC_T} = \bE\Big[ \langle Z_t, e_i\rangle \cdot \langle Z_s, e_j\rangle \Big]. 
        \end{align*}
        \item The \emph{Volterra space} is the closure of the collection of Volterra kernels
        \begin{equation*}
            \cV_T:=\spn\Big\{ K(t, \cdot): t\in [0,T] \Big\}
            \quad \mbox{with inner product}\quad
            \Big\langle K(t, \cdot), K(s, \cdot) \Big\rangle_{\cV_T} = R(t, s). 
        \end{equation*}
    \end{enumerate}
    We have the following Hilbert space isometric isomorphisms
    \begin{equation*}
        \scI: \fWIC_T \to \RKHS_T, 
        \quad
        \scJ: \cV_T \to \fWIC_T, 
        \quad 
        \scK: \cV_T \to \RKHS_T, 
    \end{equation*}
    defined by
    \begin{equation*}
        \scI \big[ \1_{[0,t]}(\cdot) I_d \big](s) = R(t, s), 
        \quad
        \scJ\big[ K(t, \cdot) \big](s) = \1_{[0,t]}(s) I_d, 
        \quad
        \scK\big[ K(t, \cdot) \big](s) = R(t, s), 
    \end{equation*}
    and $\scI \circ \scJ = \scK$. 

	\subsection{The fundamental martingale}
	
	The following Assumption on the Volterra kernel will be central to this work:
	\begin{assumption}
		\label{assumption:VolterraK}
		Let $K:[0,T] \to L^2\big([0,T]; \lin(\bR^d, \bR^d) \big)$ be a Volterra kernel. Suppose that there exists another Volterra kernel $L:[0,T] \to \fWIC$ such that for any $s, t\in [0,T]$, 
		\begin{equation}
			\label{eq:assumption:VolterraK}
			\scJ^*\big[ L(t, \cdot) \big](s) =  \1_{[0,t]}(s) I_d. 
		\end{equation}
	\end{assumption}
    As the closure of the Hilbert space
    \begin{equation*}
        \spn\Big\{ \1_{[0,t]} : t\in [0,T] \Big\}
        \quad \mbox{with inner product} \quad
        \Big\langle \1_{[0,t]}, \1_{[0,s]} \Big\rangle = t\wedge s
    \end{equation*}
    is equal to $L^2\big( [0,T]; \lin(\bR^d, \bR^d) \big)$, we observe that Assumption \ref{assumption:VolterraK} is equivalent to the Volterra space $\cV_T$ being isomorphic to $L^2\big( [0,T]; \lin(\bR^d, \bR^d) \big)$. 

    \begin{proposition}
        \label{proposition:RKHS=I^*}
        Let $(\Omega, \cF, \bF, \bP)$ be a complete filtered probability space carrying a $d$-dimensional Brownian motion, let $K:[0,T] \to L^2\big([0,T]; \lin(\bR^d, \bR^d) \big)$ be a Volterra kernel that satisfies Assumption \ref{assumption:VolterraK} and let $Z$ be a Gaussian Volterra process of the form \eqref{eq:GaussVolterra}. 

        Then for any $t\in [0,T]$ the reproducing kernel Hilbert space is isometrically isomorphic to
        \begin{equation}
            \label{eq:prop:LND-filtration-H}
            \RKHS_t = \bigg\{ \int_0^\cdot K(\cdot, s) h_s ds: h\in L^2\big( [0,t]; \bR^d \big) \bigg\} \subseteq \cC_{0, T}^d. 
        \end{equation}
        We define
		\begin{equation}
            \label{eq:proposition:Fundamental-Wiener}
			W_t^* = \int_0^t L(t, s) dZ_s. 
		\end{equation}
		Then $W_t^*$ is an $\bF$ Brownian motion. We refer to $W^*$ as the \emph{fundamental Brownian motion}. 
    \end{proposition}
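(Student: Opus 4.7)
The plan is to leverage the chain of isometric isomorphisms $\cV_T \xrightarrow{\scJ} \fWIC_T \xrightarrow{\scI} \RKHS_T$ already recorded in the excerpt. First I address the characterization of $\RKHS_T$. By the It\^o isometry applied to the Volterra representation $Z_t = \int_0^t K(t,s) dW_s$, the covariance function $R(t, \cdot)$ admits an explicit expression as a Volterra integral of $K$, and consequently each generator $R(\cdot, t) u$ of $\RKHS_T$ can be written as $\int_0^\cdot K(\cdot, r) h_r\,dr$ for an appropriate $h \in L^2([0,T]; \bR^d)$ supported on $[0, t]$. By bilinearity this representation is preserved under finite linear combinations, so it remains to establish that the map $\Theta: L^2([0,T]; \bR^d) \to \RKHS_T$ defined by $\Theta(h) = \int_0^\cdot K(\cdot, r) h_r\,dr$ is an isometric isomorphism: isometry is a direct consequence of It\^o's isometry, while surjectivity onto the closure is precisely the content of Assumption \ref{assumption:VolterraK}, which through $\scJ^*$ identifies $\cV_T$ with the full $L^2$ space. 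Restricting the integrands to $[0,t]$ then yields the localized description of $\RKHS_t$.

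For the fundamental Brownian motion, I verify in turn that $W^*$ is centred Gaussian, $\bF$-adapted, has covariance $t \wedge s$, and admits a continuous modification. Centred Gaussianity is immediate, since $W^*_t$ is by definition the Wiener integral of the deterministic kernel $L(t, \cdot) \in \fWIC_T$ against the centred Gaussian process $Z$. Adaptedness follows from the Volterra property $L(t, s) = 0$ for $s > t$, which forces $W^*_t$ to be a functional of $Z|_{[0,t]}$ and hence of $W|_{[0,t]}$. The crux is the covariance computation: the Wiener-integral isometry gives $\bE[W^*_t W^*_s] = \langle L(t, \cdot), L(s, \cdot) \rangle_{\fWIC_T}$, and because $\scJ: \cV_T \to \fWIC_T$ is an isometric isomorphism so is its adjoint $\scJ^*$, so the right-hand side equals $\langle \scJ^*[L(t,\cdot)], \scJ^*[L(s,\cdot)] \rangle_{\cV_T}$; Assumption \ref{assumption:VolterraK} then reduces this to $\langle \1_{[0,t]} I_d, \1_{[0,s]} I_d \rangle_{\cV_T} = t \wedge s$, using that the $\cV_T$ inner product coincides on step functions with the standard $L^2$ inner product.

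To conclude, centred Gaussianity together with covariance $t \wedge s$ forces independence of increments (uncorrelation equals independence in the Gaussian world), and $\bE[|W^*_t - W^*_s|^2] = |t - s|$ yields a continuous modification via Kolmogorov's criterion; combined with adaptedness this produces the $\bF$-Brownian motion structure. The main technical obstacle I anticipate is justifying that the Wiener integral $\int L(t, \cdot)\,dZ$ is well-defined as a random variable in the first place, since the Volterra kernels arising in practice (compare \eqref{eq:fbm_Volterra}) are genuinely singular at the diagonal; the apparatus of $\fWIC_T$ developed in the excerpt, together with the hypothesis $L(t, \cdot) \in \fWIC$, is precisely what is needed to make this interpretation rigorous and to close the covariance computation above.
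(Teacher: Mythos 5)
Note first that the paper does not prove this proposition here: it defers to the companion paper \cite{Hu2023Locally}. The closest material in the present text is the proof of Proposition \ref{proposition:filtration1}, which runs exactly the isometry computations you use (transfer of Wiener integrals via $\scJ^*$, then $\bE[W_tW_t^*]=t$). Your identification of $\RKHS_t$ with $\big\{\int_0^\cdot K(\cdot,s)h_s\,ds : h\in L^2([0,t];\bR^d)\big\}$ via the covariance formula \eqref{eq:R-covariance} and the density supplied by Assumption \ref{assumption:VolterraK}, and your covariance computation $\bE[W_t^*W_s^*]=\langle \scJ^*[L(t,\cdot)],\scJ^*[L(s,\cdot)]\rangle = t\wedge s$, are both sound and are in the spirit of what the authors do.

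There is, however, one step that does not follow as written: the concluding claim that being a Brownian motion (centred Gaussian, covariance $t\wedge s$, continuous modification) \emph{combined with adaptedness} yields the $\bF$-Brownian motion structure. Adaptedness plus Brownianity in one's own filtration does not imply the $\bF$-Brownian motion property; one must show that $W_t^*-W_s^*$ is independent of $\cF_s$, not merely of $\sigma(W_r^*: r\le s)$. (Standard counterexample: a Brownian motion $B$ is adapted to $\cF_t=\sigma(B_r, r\le t;\,B_1)$ but is not an $\cF$-Brownian motion.) In the present setting the missing step is a short computation with the tools you already have: transferring via $\scJ^*$ gives $W_t^*-W_s^*=\int_0^T \1_{(s,t]}(u)\,dW_u$, which is orthogonal to $Z_r=\int_0^r K(r,u)\,dW_u$ for every $r\le s$ because $K(r,u)=0$ for $u>r$, and hence by joint Gaussianity independent of $\cF_s^Z=\cF_s^W$; if $\bF$ is larger than $\bF^W$ one additionally needs that $W$ is itself an $\bF$-Brownian motion. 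Equivalently, and this is how the paper closes the loop in Proposition \ref{proposition:filtration1}, one shows $\bE[W_tW_t^*]=t$ and hence $W^*=W$ $\bP$-almost surely, after which the $\bF$-Brownian motion property is inherited from $W$. Adding either of these arguments completes your proof.
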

    For a proof of \ref{proposition:RKHS=I^*}, see \cite{Hu2023Locally}. 
    
    \begin{example}
        Recalling the Volterra kernel defined in Equation \eqref{eq:fBmKernel}, we recall the result originally proved in \cite{Decreusfond1999Stochastic} that the reproducing kernel Hilbert space for Fractional Brownian motion can be written as
        \begin{equation*}
            \RKHS_t = 
            \bigg\{ \int_0^\cdot K(\cdot, s) f_s ds : \quad f\in L^2\big( [0,t]; \bR^d \big) \bigg\} \subseteq \cC_T^d
        \end{equation*}
        where $K:[0,T] \to L^2\big([0,T]; \bR^d \big)$ is the Volterra kernel as defined in Equation \eqref{eq:fBmKernel}. 
    \end{example}

    \subsubsection*{Secure local non-determinism}

    \begin{definition}
        \label{definition:Filtration-Hilbert}
        Let $(I, \leq)$ be a totally ordered set and suppose that for every $i \in I$ we have that $\big(\cH_i, +, \langle \cdot, \cdot\rangle_{i} \big)$ is a Hilbert space over a common field $\bF$. We say that $(\cH_i)_{i\in I}$ is a \emph{filtration of Hilbert spaces} if
        \begin{equation*}
            \mbox{for every $i\leq j$}\quad \cH_i \subseteq \cH_j \quad \mbox{and}\quad \forall f,g \in \cH_i \quad \langle f, g \rangle_j = \langle f, g \rangle_i. 
        \end{equation*}

        Two Hilbert space filtrations $(\cH_i)_{i\in I}$ and $(\cG_i)_{i \in I}$ are said to be isomorphic if for every $j \in I$ there exists a Hilbert space isomorphism
        \begin{equation*}
            \Psi^j: \big( \cH_j, \langle \cdot, \cdot\rangle_j \big)
            \to
            \big(\cG_j, \langle, \cdot, \cdot \rangle_j \big)
            \quad\mbox{such that}
            \quad \forall i \leq j \quad
            \Psi^j\big|_{\cH_i} = \Psi^i. 
        \end{equation*}
    \end{definition}

    We include the following result from \cite{Hu2023Locally} which 
    \begin{theorem}
        \label{thm:LND-filtration}
        Let $(Z_t)_{t\in [0,T]}$ be a $d$-dimensional Gaussian process with covariance $R:[0,T]^{\times 2} \to \bR$. Let
        \begin{align*}
            &\RKHS_t:= \spn \Big\{ \bE\big[ \langle Z_s, u\rangle_{\bR^d} \cdot Z_{\cdot} \big]: s\in [0,t], u\in \bR^d \Big\}
            \\
            &\mbox{with inner product}\quad
            \Big\langle \bE\big[ \langle Z_s, u\rangle_{\bR^d} \cdot Z_{\cdot} \big], \bE\big[ \langle Z_r, v\rangle_{\bR^d} \cdot Z_{\cdot} \big] \Big\rangle_{\RKHS_t} = \bE\Big[ \langle Z_s, u\rangle_{\bR^d} \cdot \langle Z_r, v\rangle_{\bR^d} \Big]. 
        \end{align*}
        Then the following are equivalent:
        \begin{enumerate}
            \item 
            \label{enum:thm:LND-filtration-1}
            The Hilbert space filtration
            \begin{equation}
                \label{EQ:thm:LND-filtration}
                \Big( \RKHS_t, \big\langle \cdot, \cdot \big\rangle_{\RKHS_t} \Big)_{t\in [0,T]} 
                \quad\mbox{is isomorphic to} \quad
                \bigg( \Big\{ f \cdot \1_{[0,t]}: f \in L^2\big( [0,T]; \bR\big) \Big\}, \big\langle \cdot, \cdot \big\rangle_t \bigg)_{t\in [0,T]}; 
            \end{equation}
            \item 
            \label{enum:thm:LND-filtration-2}
            There exists a Volterra kernel $K:[0,T] \to L^2\big( [0,T]; \lin(\bR^d, \bR^d) \big)$ such that
            \begin{equation*}
                Z_t = \int_0^t K(t, s) dW_s
            \end{equation*}
            and $K$ satisfies Assumption \ref{assumption:VolterraK};
        \end{enumerate}
        We say that Gaussian processes with a Hilbert space filtration of reproducing kernels that satisfies Equation \eqref{EQ:thm:LND-filtration} is said to be \emph{securely locally non-deterministic}. 
    \end{theorem}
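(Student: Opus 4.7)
The plan is to prove the two implications separately by exploiting the chain of isometric isomorphisms $\scI$, $\scJ$, $\scK$ between $\fWIC_T$, $\cV_T$ and $\RKHS_T$ set up in Section \ref{subsection:GVP}.

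For the direction \ref{enum:thm:LND-filtration-2} $\Rightarrow$ \ref{enum:thm:LND-filtration-1}, I would invoke Proposition \ref{proposition:RKHS=I^*} to identify $\RKHS_t$ as the image of $L^2([0,t];\bR^d)$ under the linear map $h \mapsto \int_0^{\cdot} K(\cdot,s) h_s\,ds$, and then define the candidate isomorphism $\Psi^t$ as the inverse of this map followed by extension by zero. Injectivity of the Volterra integral map is exactly where Assumption \ref{assumption:VolterraK} is used: if $\int_0^\cdot K(\cdot,s) h_s\,ds \equiv 0$, then applying the kernel $L$ and using $\scJ^\ast[L(t,\cdot)] = \1_{[0,t]} I_d$ would force $h = 0$ almost everywhere. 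The isometry property would be verified on the dense family $\{R(t,\cdot)\}_{t\in[0,T]}$ via the identity $\|R(t,\cdot)\|_{\RKHS_T}^2 = R(t,t) = \int_0^t \langle K(t,u),K(t,u)\rangle\,du$ (which is \eqref{eq:R-covariance}) and extended by bilinearity; filtration-compatibility $\Psi^t|_{\RKHS_s} = \Psi^s$ is immediate from the fact that $\RKHS_s \subseteq \RKHS_t$ is represented by $h$ supported in $[0,s]$.

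For the converse \ref{enum:thm:LND-filtration-1} $\Rightarrow$ \ref{enum:thm:LND-filtration-2}, I would use the given family of filtration-compatible isomorphisms $\Psi^t$ to define the kernel coordinatewise by
\begin{equation*}
K(t,\cdot) := \Psi^T\bigl[ R(t,\cdot) \bigr],
\end{equation*}
interpreted as an $L^2([0,T];\lin(\bR^d,\bR^d))$-valued function of $t$. The filtration compatibility $\Psi^T|_{\RKHS_t} = \Psi^t$ combined with $R(t,\cdot) \in \RKHS_t$ ensures $K(t,\cdot)$ is supported on $[0,t]$, so $K$ is Volterra. The isometry of $\Psi^T$ then gives
\begin{equation*}
\int_0^{t\wedge s} \langle K(t,u), K(s,u)\rangle\, du = \bigl\langle R(t,\cdot), R(s,\cdot)\bigr\rangle_{\RKHS_T} = R(t,s),
\end{equation*}
matching \eqref{eq:R-covariance}. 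To produce the driving Brownian motion, I would compose $\Psi^T$ with the canonical isometry $\scI^{-1}:\RKHS_T \to \fWIC_T$ and then with the Wiener integral, which maps $L^2([0,T];\bR^d)$ isometrically to a Gaussian subspace of $L^2(\Omega)$ generated by a (possibly enlarged) Brownian motion $W$; then by construction $Z_t = \int_0^t K(t,s)\,dW_s$. Finally, to verify Assumption \ref{assumption:VolterraK}, I would set $L(t,\cdot) := \scI \circ (\Psi^T)^{-1}[\1_{[0,t]} I_d] \in \fWIC_T$ and use the defining property of $\scJ$ together with the inverse relation between $\Psi^T$ and the kernel $K$ to check the identity $\scJ^\ast[L(t,\cdot)](s) = \1_{[0,t]}(s) I_d$ on the dense span of indicators.

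The main obstacle I foresee lies in the converse direction and is essentially bookkeeping: one must weave together the three isomorphisms $\scI, \scJ, \scK$ together with the abstractly given $\Psi^t$, keeping track of which inner product is used at each step, so as to recover \emph{both} the Volterra representation \emph{and} the existence of the inverse Volterra kernel $L$ (as opposed to only the representation itself, which would be the weaker statement of Hida--Cram\'er type). Producing $L$ from surjectivity of $\Psi^T$ is the key structural use of the filtration isomorphism hypothesis; any failure would correspond to a Gaussian process whose reproducing kernels $R(t,\cdot)$ span a strictly smaller Hilbert space than the full $L^2([0,T])$, and hence a non--securely--locally--non-deterministic process.
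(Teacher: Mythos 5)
A caveat before any comparison: the paper does not prove Theorem \ref{thm:LND-filtration} at all --- it is imported from the companion paper \cite{Hu2023Locally} (``We include the following result from...''), so there is no in-paper argument to measure you against. Judged on its own terms, your outline is the natural one and is essentially correct: the direction (\ref{enum:thm:LND-filtration-2}) $\Rightarrow$ (\ref{enum:thm:LND-filtration-1}) is exactly the content of Proposition \ref{proposition:RKHS=I^*} together with the injectivity argument via the inverse kernel $L$, and the converse correctly takes $K(t,\cdot):=\Psi^T[R(t,\cdot)]$, reads off the Volterra support property from filtration compatibility, obtains the covariance identity \eqref{eq:R-covariance} from the isometry, and recovers $L$ from surjectivity of $\Psi^T$, which is indeed the structural point of the hypothesis.

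Two steps deserve tightening. First, in your definition of $L$ the isometry $\scI$ points the wrong way: $\scI$ maps $\fWIC_T\to\RKHS_T$, so you want $L(t,\cdot):=\scI^{-1}\circ(\Psi^T)^{-1}\big[\1_{[0,t]}I_d\big]$. With that correction, $\scJ^*[L(t,\cdot)]=\scK^{-1}\circ(\Psi^T)^{-1}\big[\1_{[0,t]}I_d\big]$, and since $\Psi^T\circ\scK$ acts as the inclusion $\cV_T\hookrightarrow L^2$ on the generators $K(t,\cdot)$, the identity \eqref{eq:assumption:VolterraK} requires knowing that $\1_{[0,t]}I_d\in\cV_T$, i.e.\ that this inclusion is onto; this does follow from surjectivity of $\Psi^T$ and density of $\{R(t,\cdot)\}$ in $\RKHS_T$, but it should be stated explicitly since it is precisely where condition (\ref{enum:thm:LND-filtration-1}) is consumed. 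Second, you should record that the abstractly constructed $W$ (the image of $\1_{[0,t]}$ under the first-chaos isometry attached to $Z$) has covariance $t\wedge s$, hence admits a continuous modification, and that $Z_t=\int_0^t K(t,s)\,dW_s$ holds almost surely rather than merely in law, because both sides are the same element of the first Wiener chaos. The remaining looseness about scalar- versus matrix-valued $L^2$ spaces is inherited from the statement \eqref{EQ:thm:LND-filtration} itself and is not your fault.
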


    \subsubsection*{The Filtration of Gaussian processes}

    We should interpret Proposition \ref{proposition:RKHS=I^*} as proof of a bijection between the reproducing kernel Hilbert space of a Gaussian Volterra process (that satisfies Assumption \ref{assumption:VolterraK}) and the reproducing kernel Hilbert space of Brownian motion. The upshot of the existence of this bijection is that we can define another Gaussian Volterra process that has the same covariance as Brownian motion (the fundamental Brownian motion). Our next step is to show that this new process is in fact \emph{the underlying Brownian motion} with which the Gaussian process is defined. To establish this, we need to consider the $\sigma$-algebra filtration generated by these two processes:
    \begin{definition}
        \label{definition:canonical}
        Let $\big( \Omega, \cF, \bF, \bP\big)$ be a complete filtered probability space carrying a $d$-dimensional Brownian motion. Let $K:[0,T] \to L^2\big( [0,T]; \lin(\bR^d, \bR^d) \big)$ be a Volterra kernel and let $Z$ be a Gaussian Volterra process of the form \eqref{eq:GaussVolterra}. Let $(\cF_t^Z)_{t\in [0,T]}$ where $\cF_t^Z = \sigma\big( Z_s: s\in [0,t]\big)$. 
        
        Then the Volterra kernel is said to be \emph{canonical} if for every $t\in [0,T]$ the $\sigma$ algebra $\cF_t^Z = \cF_t$.
    \end{definition}
    A Gaussian Volterra process of the form \eqref{eq:GaussVolterra} necessarily exists on a probability space that carries a Brownian motion. Under Assumption \ref{assumption:VolterraK}, there is a measurable transformation of the process $Z$ to $W^*$ which is also a Brownian motion. Here we address whether the \emph{fundamental Brownian motion} is the same as the \emph{underlying Brownian motion}:	
	\begin{proposition}
        \label{proposition:filtration1}
		Let $\big( \Omega, \cF, \bF, \bP \big)$ be a filtered probability space carrying a $d$-dimensional Brownian motion. Let $K:[0,T] \to L^2\big([0,T]; \lin(\bR^d, \bR^d) \big)$ be a Volterra kernel that satisfies Assumption \ref{assumption:VolterraK} and let $Z$ be a Gaussian Volterra process of the form \eqref{eq:GaussVolterra}.  
		
		Then the filtration generated by the Brownian motion $\bF=(\cF_t^W)_{t\in [0,T]}$ is the same as the filtration $(\cF_t^Z)_{t\in [0,T]}$ generated by $Z$ up to sets of measure 0. 

        Hence, every Gaussian Volterra process that satisfies Assumption \ref{assumption:VolterraK} is canonical. 
	\end{proposition}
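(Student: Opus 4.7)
The plan is to prove the two inclusions $\cF_t^Z \subseteq \cF_t^W$ and $\cF_t^W \subseteq \cF_t^Z$, both modulo $\bP$-null sets. The forward inclusion is immediate: by construction $Z_s = \int_0^s K(s,u)\,dW_u$ is $\cF_s^W$-measurable for every $s\leq t$, so $\cF_t^Z \subseteq \cF_t^W$ (the null set correction comes from the usual completion).

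For the reverse inclusion I would appeal to Proposition \ref{proposition:RKHS=I^*}, which produces the fundamental Brownian motion $W^*_t := \int_0^t L(t,s)\,dZ_s$. Because $L(t,s)=0$ for $s>t$, each $W^*_t$ is $\cF_t^Z$-measurable, and hence the reverse inclusion follows from the single pathwise identification $W^*_t = W_t$ $\bP$-almost surely, for every $t\in[0,T]$: once this is in hand, $\cF_t^W = \cF_t^{W^*} \subseteq \cF_t^Z$ up to $\bP$-null sets.

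The identification $W^*=W$ is obtained by rewriting the Wiener integral against $Z$ as a Wiener integral against $W$. The map $\phi \mapsto \int \phi\,dZ$ is an isometry from $\fWIC_T$ into $L^2(\Omega)$, and on step functions $\phi = \1_{[0,t]}$ one checks directly that $\int \1_{[0,t]}\,dZ = Z_t = \int_0^T K(t,u)\,dW_u = \int_0^T \scJ^{-1}[\1_{[0,t]}](u)\,dW_u$ by the defining property $\scJ[K(t,\cdot)] = \1_{[0,t]} I_d$. Linearity plus the isometric extension argument then yields the change-of-integrator formula $\int \phi\,dZ = \int \scJ^{-1}[\phi]\,dW$ for every $\phi \in \fWIC_T$; the formula is meaningful because Assumption \ref{assumption:VolterraK} upgrades $\scJ$ to a Hilbert-space isomorphism, so $\scJ^{-1}=\scJ^*$ is defined on all of $\fWIC_T$. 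Applying this to $\phi = L(t,\cdot)\in\fWIC_T$ and using Assumption \ref{assumption:VolterraK} once more gives $W^*_t = \int_0^T \scJ^*[L(t,\cdot)](u)\,dW_u = \int_0^T \1_{[0,t]}(u)\,dW_u = W_t$, with the equality holding in $L^2(\Omega)$ and hence $\bP$-almost surely. Equivalently, since $W^*_t - W_t$ is centred Gaussian, it is enough to compute the cross-covariance $\bE[W^*_t W_s] = t\wedge s$ by the same substitution and read off $\bE[(W^*_t - W_t)^2]=0$.

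The main technical obstacle is justifying the change-of-integrator identity $\int \phi\,dZ = \int \scJ^{-1}[\phi]\,dW$ on all of $\fWIC_T$ rather than just on step functions; this rests on correctly tracking the three Hilbert spaces $\cV_T$, $\fWIC_T$, $\RKHS_T$ of Section \ref{subsection:GVP} together with their inner products, and on exploiting that under Assumption \ref{assumption:VolterraK} the map $\scJ$ is an isometric \emph{isomorphism} so that its adjoint coincides with its inverse and acts on the full space $\fWIC_T$. Once this abstract machinery is set up, the identity $W^*=W$ reduces to a one-line substitution.
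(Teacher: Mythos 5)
Your proposal is correct and follows essentially the same route as the paper: both prove $\cF_t^Z\subseteq\cF_t^W$ directly, obtain the reverse inclusion via the fundamental Brownian motion $W^*_t=\int_0^t L(t,s)\,dZ_s$, and identify $W^*=W$ through the change-of-integrator formula $\int h\,dZ=\int \scJ^*[h]\,dW$ together with Assumption \ref{assumption:VolterraK} (the paper carries out the covariance computation $\bE[W_tW_t^*]=t$ and $\bE[|W_t^*-W_t|^2]=0$, which you note as the equivalent variant). No substantive differences.
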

	
	\begin{proof}
		It should be clear that for each choice of $t\in [0,T]$ the random variable
		\begin{equation*}
			Z_t = \int_0^t K(t, s) dW_s 
			\quad \mbox{is measurable with respect to} \quad
			\cF_t^W. 
		\end{equation*}
		Therefore, we focus on proving the reverse implication. 
		
		First of all, we have that
        \begin{equation*}
    		\int_0^t h(s) dZ_s = \int_0^t \scJ^*\big[ h \big](s) dW_s. 
    	\end{equation*}
        Therefore, we have that for any two  elements $f, g\in \fWIC$
		\begin{equation*}
			\bE\Big[ \int_0^T f(s) dZ_s \cdot \int_0^T g(s) dZ_s \Big]
			=
			\int_0^T \scJ^*\big[f\big](s) \cdot \scJ^*\big[ g\big](s) ds
		\end{equation*}
		Thanks to Assumption \ref{assumption:VolterraK}, we have that a Volterra kernel $L:[0,T] \to \fWIC$ that satisfies Equation \eqref{eq:assumption:VolterraK} exists. Thus for each choice of $t\in [0,T]$ the random variable
		\begin{equation*}
			W_t^* = \int_0^t L(t, s) dZ_s 
			\quad \mbox{is measurable with respect to} \quad
			\cF^Z_t. 
		\end{equation*}
		Therefore, we have that $\cF_t^{W^*} \subseteq \cF_t^{Z} \subseteq \cF_t^W$. 
		
		Now we show that the two Gaussian processes $W$ and $W^*$ are $\bP$-almost surely equal. For any $t\in [0,T]$, 
		\begin{align*}
			&\bE\big[ W_t \cdot W_t^* \big] = \bE\bigg[ \int_0^T \1_{[0,t]}(u) dW_u \cdot \int_0^T L(t, u) dZ_u \bigg]
			\\
			&= \bE\bigg[ \int_0^T \1_{[0,t]}(u) dW_u \cdot \int_0^T \scJ^*\big[ L(t, \cdot) \big](u) dW_u \bigg]
			\\
			&= \bE\bigg[ \int_0^T \1_{[0,t]} (u) dW_u \cdot \int_0^T \1_{[0,t]} (u) dW_u \bigg]
			= \int_0^T \1_{[0,t]} (u) du = t
		\end{align*}
		so that
		\begin{align*}
			\bE\Big[ \big| W_t^* - W_t \big|^2 \Big] &= \bE\Big[ \big| W_t^*\big|^2 \Big] + \bE\Big[ \big| W_t \big|^2 \Big] - 2\bE\Big[ W_t^* \cdot W_t \Big]
			\\
			&= 2t - 2t = 0. 
		\end{align*}
		Therefore $\cF_t^{W^*} = \cF_t^W$ and we conclude that $\cF_t^Z = \cF_t^W$. 
	\end{proof}

    Having considered Gaussian Volterra processes which have the property (Equation \eqref{eq:assumption:VolterraK}), we want to consider a class of stochastic process analogous to semimartingales for which we can embed the solution of our stochastic differential equations:
	\begin{definition}
        \label{definition:Q}
		Let $\big( \Omega, \cF, \bF, \bP \big)$ be a complete filtered probability space supporting a $d$-dimensional $\bF$-Brownian motion $W$. Let $K:[0,T] \to L^2\big([0,T]; \lin(\bR^d, \bR^d) \big)$ be a Volterra kernel that satisfies Assumption \ref{assumption:VolterraK}. 
		
		Let $b:\Omega \times [0,T] \to \bR^d$ be progressively measurable, suppose that
        \begin{equation*}
            t \mapsto \int_0^t L(t, s) b_s ds
        \end{equation*}
        is absolutely continuous and the integral
		\begin{equation}
			\label{eq:definition:Fund-SemiMart}
			\bE\bigg[ \int_0^T \Big| \frac{d}{dt} \int_0^t L(t, s) b_s ds \Big|^2 dt \bigg] < \infty. 
		\end{equation}
		
		We define $Q^b: \Omega \times [0,T] \to \bR^d$ by
		\begin{equation}
			\label{Q}
			Q^b_t := \frac{d}{dt} \int_0^t L(t, s) b_s ds. 
		\end{equation}
	\end{definition}
    
    As first proved in \cite{Hu2023Locally}, Equation \eqref{eq:definition:Fund-SemiMart} is equivalent to
    \begin{equation}
        \label{eq:lemma:Existence_Q}
        \Big\| \int_0^\cdot b_s ds \Big\|_{\cH_t}< \infty
    \end{equation}
    and
    \begin{equation}
        \label{eq':lemma:Existence_Q}
        \int_0^t b_s ds = \int_0^t K(t, s) Q_s^b ds
        \quad \mbox{and}\quad 
        \Big\| \int_0^\cdot b_s ds \Big\|_{\RKHS_t} = \bigg( \int_0^t \big| Q_s^b \big|^2 ds \bigg)^{\tfrac{1}{2}}. 
    \end{equation}
    
    \begin{proposition}
        \label{proposition:ProgressiveMeasurability}
        Let $\big( \Omega, \cF, \bP \big)$ be a complete probability carrying a $d$-dimensional Brownian motion and let $K:[0,T] \to L^2\big([0,T]; \lin(\bR^d, \bR^d) \big)$ be a Volterra kernel that satisfies Assumption \ref{assumption:VolterraK}. Let $Z$ be the Gaussian Volterra process of the form Equation \eqref{eq:GaussVolterra} and let $\bF = (\cF_t^Z)_{t\in [0,T]}$ be the filtration generated by $Z$ such that $\cF_0$ contains all $\bP$-null sets. 

        Let $b:\Omega \times [0,T] \to \bR^d$ and suppose that Equation \eqref{eq:lemma:Existence_Q} holds. Then $b$ is progressively measurable if and only if $Q^b$ is progressively measurable. 
    \end{proposition}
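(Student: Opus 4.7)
The plan is to exploit the two pointwise identities that characterize the correspondence $b \leftrightarrow Q^b$: by definition, $Q^b_t = \tfrac{d}{dt} \int_0^t L(t,s) b_s\, ds$, and by Equation \eqref{eq':lemma:Existence_Q} the inverse relation holds, namely $\int_0^t b_s \, ds = \int_0^t K(t,s) Q_s^b \, ds$. Both maps are Volterra convolutions that are \emph{causal} since $L(t,s) = K(t,s) = 0$ for $s>t$, and the argument in each direction is symmetric with the roles of $(K,b)$ and $(L,Q^b)$ interchanged.

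For the forward direction, assume $b$ is progressively measurable and consider the process $F_t := \int_0^t L(t,s) b_s \, ds$. First I would verify that $F$ is itself progressively measurable. For each fixed $t$, the integrand $s \mapsto L(t,s) b_s$ vanishes for $s>t$, so $F_t$ is $\cF_t$-measurable; joint measurability in $(t,\omega)$ follows from a Fubini argument applied to the jointly measurable map $(t,\omega,s) \mapsto L(t,s) b_s(\omega) \1_{s \le t}$, using that $L(t, \cdot) \in \fWIC$ for each $t$. Combined with adaptedness, this yields progressive measurability of $F$. Now the integrability hypothesis together with the characterization \eqref{eq:lemma:Existence_Q}--\eqref{eq':lemma:Existence_Q} guarantees that $F$ is absolutely continuous in $t$ with derivative $Q^b$ a.e.

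To extract a progressively measurable version of $Q^b$, I would use finite-difference approximations
\begin{equation*}
    Q_t^{b,n} := n \bigl( F_t - F_{(t - 1/n) \vee 0} \bigr),
\end{equation*}
each of which is progressively measurable as a composition of a progressively measurable process with a continuous time-shift. Since $Q_t^{b,n} \to Q_t^b$ for almost every $t$ by Lebesgue differentiation, the pointwise $\limsup$ in $n$ gives a progressively measurable process agreeing with $Q^b$ almost everywhere in $t$, and this is the required modification. For the reverse direction, I would apply the identical argument with $L$ replaced by $K$, $b$ replaced by $Q^b$, and $F$ replaced by $G_t := \int_0^t K(t,s) Q^b_s \, ds = \int_0^t b_s\, ds$, exploiting \eqref{eq':lemma:Existence_Q} to recover $b$ as $G'$.

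The main technical subtlety I anticipate is the joint measurability step for $F$ (and analogously $G$): the Volterra kernels $K$ and $L$ can have integrable singularities near the diagonal (depending on the Hurst parameter in the fractional Brownian case), so some care is needed to justify the Fubini argument and ensure the approximating sequence $Q_t^{b,n}$ is well-defined in $L^2$. Once this is handled via the $\fWIC$-integrability of $L(t,\cdot)$ combined with the hypothesis \eqref{eq:lemma:Existence_Q}, the remainder reduces to standard measure-theoretic facts about absolutely continuous functions and progressively measurable modifications.
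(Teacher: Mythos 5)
Your proposal is correct and follows essentially the same route as the paper: in each direction one shows the time-integral process ($F_t=\int_0^t L(t,s)b_s\,ds$, resp.\ $\int_0^t K(t,s)Q^b_s\,ds$) is progressively measurable and then differentiates, using \eqref{eq':lemma:Existence_Q} to identify the result in the reverse direction. Your finite-difference/Lebesgue-differentiation step merely makes explicit the passage from progressive measurability of the integral to that of its derivative, which the paper's proof asserts without detail.
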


    \begin{proof}
        Thanks to Proposition \ref{proposition:filtration1}, we know that the filtration $\bF^Z = (\cF_t^Z)_{t\in [0,T]}$ is the same as the filtration $\bF^{W} = (\cF_t^W)_{t\in [0,T]}$, so by choosing $\cF_0^Z = \cF_0$ we obtain $\bF^Z = \bF$. 

        First suppose that $b$ is progressively measurable. Then each $t\in [0,T]$, we have that the mapping $[0,T] \times \Omega \ni (t, \omega) \mapsto b_t(\omega)$ is $\mbox{Borel}([0,t]) \otimes \cF_t$ measurable so that
        \begin{equation*}
            [0,T] \times \Omega \ni (t, \omega) \mapsto \int_0^t L(t, s) b_s(\omega) ds 
            \quad \mbox{is} \quad 
            \mbox{Borel}([0,t]) \otimes \cF_t\quad \mbox{-measurable.}
        \end{equation*}
        Hence by Equation \eqref{Q}
        \begin{equation*}
            [0,T] \times \Omega \ni (t, \omega) \mapsto \int_0^t Q_s^b(\omega) ds 
            \quad \mbox{is} \quad 
            \mbox{Borel}([0,t]) \otimes \cF_t\quad \mbox{-measurable.}
        \end{equation*}
        and we conclude that $Q_t^b$ is $\mbox{Borel}([0,t]) \otimes \cF_t$-measurable. 

        On the other hand suppose that $Q_t^b$ is progressively measurable and define
        \begin{equation*}
            \tilde{b}:\Omega \times [0,T] \to \bR^d 
            \quad \mbox{by}\quad
            \tilde{b}(\omega, t) := \frac{d}{dt} \int_0^t K(t, s) Q_s^b ds. 
        \end{equation*}
        A similar argument yields that $\tilde{b}$ is also progressively measurable, and we conclude from Equation \eqref{eq':lemma:Existence_Q} that $\bP \times \mbox{Lebesgue}$ almost everywhere that $\tilde{b}(\omega, s) = b(\omega, s)$. 
    \end{proof}

	\section{Mimicking Theorems for fractional SDEs}
    \label{section:Mimicking}
	
	Following the ideas of \cite{Brunick2013Mimicking}, we prove a mimicking theorem for stochastic processes with additive securely locally non-deterministic Gaussian noises where the drift term is a stochastic process on the associated reproducing kernel Hilbert space.  
	
	In section \ref{subsection:Shreve}, we recall the results of \cite{Brunick2013Mimicking}. In Section \ref{subsection:FundamentWienerMim}, we introduce the fundamendal semimartingale associated to SDEs with additive Gaussian noise and exploit this process to prove a Mimicking theorem (see Theorem \ref{brownianFilteringThm}). In Section \ref{subsec:EntropyEstimates}, we provide an entropy estimates for stochastic differential equations driven by Gaussian processes and prove that the laws of such processes satisfy a Talagrand inequality. 
	
	\subsection{Pathwise Mimicking Theorem for Martingales}
    \label{subsection:Shreve}

	\begin{proposition}
		\label{kavitaFilteringLemma}
		Let $\big( \cC_T^d, \cB(\cC_T^d), \bF, \bP \big)$ be a probability space  supporting an $\bF$-Brownian motion $W$ taking values in $\bR^d$. Let $b:[0,T] \times \cC_T^d \to \bR^d$ be progressively measurable, let $X_0:\Omega \to \bR^d$ be $\cF_0$-measurable and suppose that 
		\begin{equation}
			\label{eq:kavitaFilteringLemma}
			\bE\Big[ \int_0^T \big| b_s \big|^2 ds \Big] < \infty, 
			\quad 
			\bE\Big[ \big| X_0 \big|^2 \Big] < \infty. 
		\end{equation}
		
		We define the stochastic process 
		\begin{equation}
			\label{eq:kavitaFilteringLemma-X}
			X_t = X_0 + \int_0^t b_s ds + W_t. 
		\end{equation}
		
		Let $\check{b}: [0,T] \times \cC_T^d \rightarrow \bR^d$ be the progressively measurable functions satisfying
		\begin{equation*}
			\check{b}(t, X[t]) = \bE\Big[ b_t \Big| X[t] \Big], \quad \mbox{ $\bP$-a.s., for a.e. } t \in [0,T].
		\end{equation*}
		Let $\cF^{X}=\big( \cF_{t}^{X} \big)_{t \geq 0}$ denote the filtration generated by $X$. 
		
		Then there exists an extension $\big( \check{\Omega}, \check{\cF}, (\check{\cF}_t)_{t\in [0,T]}, \check{\bP} \big)$ of the probability space $\big( \Omega, \cF, (\cF_t^{X})_{t\in [0,T]}, \bP \big)$ supporting a $\check{\cF}$-Martingale $\check{W}$ such that the stochastic differential equations
		\begin{equation}
			\check{X}_t = \int_0^t \check{b}\big( t, \check{X}[t] \big) ds + \check{W}_t
            \label{eq:filteredEqn}
		\end{equation}
		admits the weak solution $\big( (\check{\Omega}, \check{\cF}, (\check{\cF}_t)_{t\in [0,T]}, \check{\bP}), \check{W}, X \big)$ whose law is equal to that of $X_t$ defined in Equation \eqref{eq:kavitaFilteringLemma-X}. 
	\end{proposition}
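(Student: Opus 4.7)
The plan is to follow the classical innovations approach from nonlinear filtering: set
\[ \check{W}_t := X_t - X_0 - \int_0^t \check{b}\big(s, X[s]\big)\, ds \]
and verify via Lévy's characterization that $\check{W}$ is a $d$-dimensional Brownian motion with respect to $\bF^X$, after which equation \eqref{eq:filteredEqn} holds tautologically with $\check{X} = X$. Since $X_0$ is $\cF_0$-measurable and $X$ is adapted to the ambient filtration $\bF$ carrying $W$, we have the crucial inclusion $\cF_t^X \subseteq \cF_t$ for every $t \in [0,T]$, which is what will let us dispatch the Brownian contribution in the decomposition of $\check{W}$. The extension of the space is then either trivial or is used only to accommodate \emph{a posteriori} augmentation by $\bP$-null sets.

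The heart of the argument is the $\bF^X$-martingale property of $\check{W}$. Substituting \eqref{eq:kavitaFilteringLemma-X} gives, for $0 \le s \le t \le T$,
\[ \check{W}_t - \check{W}_s = \int_s^t \bigl(b_u - \check{b}(u, X[u])\bigr)\, du + (W_t - W_s). \]
For any $A \in \cF_s^X \subseteq \cF_s$, independence of $W_t - W_s$ from $\cF_s$ kills the Brownian contribution in $\bE[(\check{W}_t - \check{W}_s)\one_A]$. For the drift integral, \eqref{eq:kavitaFilteringLemma} combined with Cauchy--Schwarz justifies Fubini, and since $A \in \cF_s^X \subseteq \cF_u^X$ for all $u \ge s$, the tower property together with the defining relation $\bE[b_u \mid \cF_u^X] = \check{b}(u, X[u])$ forces $\bE[(b_u - \check{b}(u, X[u]))\one_A] = 0$ for a.e.\ $u$. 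Hence $\check{W}$ is a continuous $\bF^X$-martingale, and since the absolutely continuous drift contributes nothing to quadratic variation, $[\check{W}, \check{W}]_t = [X, X]_t = [W, W]_t = tI_d$. Lévy's theorem then upgrades $\check{W}$ to an $\bF^X$-Brownian motion, so taking $\check{\Omega} = \Omega$, $\check{\cF} = \cF$, $\check{\cF}_t = \cF_t^X$, and $\check{\bP} = \bP$ (completed by null sets if necessary) produces the desired weak solution; the law of $\check{X}$ agrees with that of $X$ by construction.

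The principal obstacle I anticipate is not the probabilistic calculation above but rather the \emph{existence} of a single progressively measurable function $\check{b} : [0,T] \times \cC_T^d \to \bR^d$ realising the family of conditional expectations $\bE[b_t \mid \cF_t^X]$ jointly in $(t, X[t])$. Abstract conditional expectation yields only equivalence classes for each fixed $t$, and one needs a measurable selection argument exploiting the Polish structure of $\cC_T^d$ together with a regular disintegration in order to glue these into a single progressive functional on pathspace. This construction is carried out in \cite{Brunick2013Mimicking} and is the content I would cite. Once such a $\check{b}$ is granted, its integrability is automatic from conditional Jensen, since $\bE[|\check{b}(s, X[s])|^2] \le \bE[|b_s|^2]$ lies in $L^1([0,T])$ by \eqref{eq:kavitaFilteringLemma}, and the remainder of the proof runs as sketched above.
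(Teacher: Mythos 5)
Your proof is correct, and it reaches the conclusion by a genuinely more direct route than the paper. The paper runs the argument through the martingale problem: it applies It\^o's formula to $\varphi(X)$ for $\varphi\in C_c^\infty(\bR^d)$, uses the same Fubini--tower-property step you use to replace $b_r$ by $\check{b}(r,X[r])$ inside the martingale property tested against bounded $\cF_s^X$-measurable random variables, and then invokes \cite{karatzasShreve}*{Proposition 5.4.6} to pass from the $\bF^X$-martingale problem to a weak solution on an extension of the space. You instead define the innovations process $\check{W}_t = X_t - X_0 - \int_0^t\check{b}(s,X[s])\,ds$ directly, verify the $\bF^X$-martingale property by the identical tower-property computation (using $\cF_s^X\subseteq\cF_s$ and the independence of $W_t-W_s$ from $\cF_s$), and conclude via L\'evy's characterization since the finite-variation correction does not change the quadratic variation. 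Both arguments share the same core filtering step; the difference is only in how one packages the conclusion. Your route buys a stronger output — $\check{W}$ is an actual $\bF^X$-Brownian motion on the \emph{original} space, so the ``extension'' is vacuous — and this works precisely because the martingale part here has identity diffusion coefficient. The paper's martingale-problem route is the one that survives when the diffusion coefficient is non-constant or degenerate (the setting of \cite{Brunick2013Mimicking}), where an extension genuinely is needed to manufacture the driving Brownian motion. You are also right to flag the existence of a single jointly progressively measurable version of $(t,x)\mapsto\bE[b_t\mid X[t]=x]$ as the genuinely delicate point; the paper sidesteps it in the same way, by taking $\check{b}$ as given in the hypothesis and deferring the measurable-selection construction to \cite{Brunick2013Mimicking}.
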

	
	\begin{proof}
		Let $C_{c}^{\infty}(\bR^d)$ denote the set of smooth functions on $\bR$ with compact support. Write $\nabla$ and $\nabla^{2}$ for the gradient and Hessian operators, respectively. By Itô's formula and Equation \eqref{eq:kavitaFilteringLemma}, for each $\varphi \in C_{c}^{\infty}(\bR)$ the process
		\begin{equation*}
			\varphi(X_t)-\int_{0}^{t} \Big( b_r \cdot \nabla \varphi( X_r ) + \tfrac{1}{2} \nabla^{2} \varphi(X_r) \Big) dr
		\end{equation*}
		is a $\cF$-martingale. In particular, if $t>s$, and if $Z$ is any bounded $\mathcal{F}_{s}$-measurable random variable then
		\begin{equation*}
			0 = \bE\left[ Z\left(\varphi(X_t)-\varphi(X_s)-\int_{s}^{t} \Big(b_r \cdot \nabla \varphi(X_r)+\tfrac{1}{2} \nabla^{2} \varphi(X_r) \Big) dr \right) \right].
		\end{equation*}
		Suppose that $Z$ is measurable with respect to $\cF_{s}^{X} \subset \cF_{s}$. Then we may use Fubini's theorem and the tower property of conditional expectations to obtain
		\begin{equation*}
			0 = \bE\left[ Z \left(\varphi(X_t) - \varphi(X_s) - \int_{s}^{t}\left( \widetilde{b} \big( r, X[r] \big) \cdot \nabla \varphi(X_r)+\tfrac{1}{2}  \nabla^{2} \varphi(X_r) \right) dr \right) \right].
		\end{equation*}
		This shows that the process
		\begin{equation*}
			\varphi(X_t) - \int_{0}^{t}\left(\widetilde{b}\big( r, X[r] \big) \cdot \nabla \varphi( X_r )+\tfrac{1}{2} \nabla^{2} \varphi(X_r) \right) d r
		\end{equation*}
		is a $\mathbb{F}^{X}$-martingale, for every $\varphi \in C_{c}^{\infty}\left(\mathbb{R}^{d}\right)$. We now follow the usual construction of weak solutions from solutions to martingale problems (using the arguments in \cite{karatzasShreve}*{Proposition 5.4.6 and Theorem 3.4.2}). By \cite{karatzasShreve}*{Proposition 5.4.6}, there exists an extended probability space $\big( \check{\Omega}, \check{\cF}, (\check{\cF}_t)_{t\in [0,T]}, \check{\bP} \big)$ such that the canonical process satisfies \eqref{eq:filteredEqn}.
	\end{proof}
	
	\subsection{The fundamental semimartingale process}
    \label{subsection:FundamentWienerMim}

    Firstly, we introduce the setting within which we will be working:
    \begin{assumption}
        \label{assumption:Mimicking}
        Let $\big( \Omega, \cF, \bF, \bP \big)$ be a complete probability carrying a $d$-dimensional Brownian motion and let $K:[0,T] \to L^2\big([0,T]; \lin(\bR^d, \bR^d) \big)$ be a Volterra kernel that satisfies Assumption \ref{assumption:VolterraK}. 
		
		Let $X_0:\Omega \to \bR^d$ be an $\cF_0$-measurable random variable and let $Z$ be the Gaussian Volterra process of the form Equation \eqref{eq:GaussVolterra}. Let $b:\Omega \times [0,T] \to \bR^d$ be progressively measurable and suppose that for every $t\in [0,T]$
        \begin{equation*}
            \bE\bigg[ \Big\| \int_0^\cdot b_s ds \Big\|_{\RKHS_t}^2 \bigg] < \infty. 
        \end{equation*}
    \end{assumption}
    
    Our goal is to transform a stochastic process driven by a Gaussian Volterra process of the form \eqref{eq:GaussVolterra} into a stochastic process driven by a Brownion motion is such a way that the transformed process becomes a semimartingale. 
    \begin{definition}
        Suppose that Assumption \ref{assumption:Mimicking} holds. For any stochastic process of the form
		\begin{equation}
            \label{eq:X}
			X_t = X_0 + \int_0^t b_s ds + Z_t,  
		\end{equation}
		we define
		\begin{align}
            \nonumber
			\newX_t :=& X_0 + \int_0^t L(t, s) dX_s
			= X_0 + \int_0^t L(t, s) b_s ds + \int_0^t L(t, s) dZ_s
            \\
            \label{eq:newX}
            =& X_0 + \int_0^t Q_s^b ds + W_t^*. 
		\end{align}
    \end{definition}
    The key insight of this paper is that stochastic processes of the form \eqref{eq:X} can be identified bijectively with processes of the form \eqref{eq:newX} and that the key properties for proving a \emph{Mimicking Theorem} (see Theorem \ref{brownianFilteringThm} below) also translate. 
    
	\begin{proposition}
        \label{proposition:Filtrations-X,newX}
		Suppose that Assumption \ref{assumption:Mimicking} holds. Let $X$ be a stochastic process of the form \eqref{eq:X} and let $\newX$ be defined as in Equation \eqref{eq:newX}. Then filtration $(\cF_t^X)_{t\in [0, T]}$ is equal to the filtration $(\cF_t^{\newX})_{t\in [0,T]}$ and $\newX$ is an $\bF$-adapted semimartingale. 
	\end{proposition}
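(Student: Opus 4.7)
The plan is to address the two claims separately: first the semimartingale property and $\bF$-adaptedness of $\newX$, then the filtration equality $\cF_t^X = \cF_t^{\newX}$.

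For the semimartingale claim, the identity \eqref{eq:newX} already displays $\newX_t = X_0 + \int_0^t Q_s^b\, ds + W_t^*$ in the canonical form ``initial value plus absolutely continuous drift plus Brownian motion''. Proposition \ref{proposition:RKHS=I^*} identifies $W^*$ as an $\bF$-Brownian motion, while Proposition \ref{proposition:ProgressiveMeasurability} combined with Assumption \ref{assumption:Mimicking} and identity \eqref{eq':lemma:Existence_Q} ensures $Q^b$ is $\bF$-progressively measurable with $\bE\bigl[\int_0^T |Q_s^b|^2\, ds\bigr] < \infty$. This immediately shows that $\newX$ is a continuous $\bF$-adapted semimartingale, with finite-variation drift $\int_0^{\cdot} Q_s^b\, ds$ and continuous-martingale part $W^*$.

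For the filtration equality, I use the Volterra duality between $K$ and $L$ to realise each of $X$ and $\newX$ as a measurable functional of the other. The inclusion $\cF_t^X \subseteq \cF_t^{\newX}$ is the more direct: since $\newX$ is a semimartingale by the previous step and $K(t,\cdot)$ is deterministic, the It\^o integral $\int_0^t K(t,s)\, d\newX_s$ is well defined and $\cF_t^{\newX}$-measurable. Substituting \eqref{eq:newX}, applying \eqref{eq':lemma:Existence_Q} to the drift contribution and the identification $W^* = W$ from Proposition \ref{proposition:filtration1} to the martingale contribution yields
\[
X_0 + \int_0^t K(t,s)\, d\newX_s = X_0 + \int_0^t K(t,s) Q_s^b\, ds + \int_0^t K(t,s)\, dW_s = X_0 + \int_0^t b_s\, ds + Z_t = X_t,
\]
so $X_t$ is $\cF_t^{\newX}$-measurable.

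For the reverse inclusion $\cF_t^{\newX} \subseteq \cF_t^X$, I must show $\newX_t$ is a measurable functional of the path $X[t]$. Splitting the defining integral along $dX = b\, ds + dZ$ gives $\newX_t = X_0 + \int_0^t L(t,s) b_s\, ds + \int_0^t L(t,s)\, dZ_s$, where the first summand equals $\int_0^t Q_s^b\, ds$ and the second equals $W_t^*$; combining these expressions with a pathwise integration by parts against the continuous path $X[\cdot]$ (using the regularity of $L(t,\cdot)$ on $(0,t)$ guaranteed by the explicit form of the Volterra kernel in Assumption \ref{assumption:VolterraK}) recasts $\newX_t$ as a Riemann--Stieltjes functional of $X[t]$, hence $\cF_t^X$-measurable. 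The main obstacle is precisely this reverse inclusion: the integral $\int_0^t L(t,s)\, dX_s$ does not fit into standard stochastic calculus, since $X$ inherits the non-semimartingale character of $Z$, and its meaning must be extracted from the explicit Volterra structure rather than from a martingale-based construction.
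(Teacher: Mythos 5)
Your decomposition of the proof is the same as the paper's: the semimartingale claim follows from \eqref{eq:newX} together with Propositions \ref{proposition:RKHS=I^*} and \ref{proposition:ProgressiveMeasurability}, and the filtration equality is obtained by reconstructing each process from the other via the kernels $K$ and $L$ and checking the reconstruction agrees almost surely (your computation $X_0+\int_0^t K(t,s)\,d\newX_s = X_t$ is exactly the paper's $X^*=X$ step). The semimartingale part and the inclusion $\cF_t^X\subseteq\cF_t^{\newX}$ are fine as you argue them.

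The gap is in your treatment of $\cF_t^{\newX}\subseteq\cF_t^X$. You propose a pathwise integration by parts of $\int_0^t L(t,s)\,dX_s$ against the continuous path $X[\cdot]$, ``using the regularity of $L(t,\cdot)$ on $(0,t)$ guaranteed by the explicit form of the Volterra kernel in Assumption \ref{assumption:VolterraK}.'' Assumption \ref{assumption:VolterraK} guarantees no such thing: it only asserts that $L(t,\cdot)$ exists as an element of the abstract Hilbert space $\fWIC$ with $\scJ^*[L(t,\cdot)]=\1_{[0,t]}I_d$, and the proposition is stated for this general class, not just for fractional Brownian motion. Even in the fBm case the kernel $L(t,\cdot)$ of \eqref{eq:fbm_Volterra} is singular at $s=0$ and (for $H>\tfrac12$) at $s=t$, so a Riemann--Stieltjes integration by parts produces divergent boundary terms that would need H\"older-continuity compensation; nothing in the hypotheses supports this. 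The correct mechanism---and the one the paper's ``arguing as before'' refers back to in Proposition \ref{proposition:filtration1}---is approximation in $\fWIC_t$: for a step function $h=\sum_i c_i\1_{[0,t_i]}$ the integral $\int_0^t h(s)\,dX_s=\sum_i c_i(X_{t_i}-X_0)$ is manifestly $\cF_t^X$-measurable, and for $h_n\to L(t,\cdot)$ in $\fWIC_t$ one has $\int_0^t h_n(s)\,dZ_s\to W_t^*$ in $L^2$ by the isometry $\scJ^*$ while $\int_0^t h_n(s)\,b_s\,ds\to\int_0^t L(t,s)\,b_s\,ds$ by the duality $\big|\int_0^t h(s)\,b_s\,ds\big|\le\|h\|_{\fWIC_t}\,\|\int_0^\cdot b_s\,ds\|_{\RKHS_t}$ from \eqref{eq':lemma:Existence_Q}; hence $\newX_t$ is a limit in probability of $\cF_t^X$-measurable variables and is $\cF_t^X$-measurable since $\cF_0$ contains the null sets. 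Replacing your integration-by-parts step by this approximation argument closes the gap.
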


	\begin{proof}
        We have already proved in Proposition \ref{proposition:filtration1} that the filtrations $\bF = \bF^Z$, and
        \begin{equation*}
            \int_0^t Q^b_s ds = \int_0^t L(t, s) b_s ds
            \quad \mbox{and}\quad
            W_t^* = \int_0^t L(t, s) dZ_s. 
        \end{equation*}
        Hence the function $t\mapsto \int_0^t L(t, s) b_s(\omega) ds$ is $\bP$-almost surely absolutely continuous. Further, by Proposition \ref{proposition:ProgressiveMeasurability} and the assumption that $b$ is progressively measurable, we conclude that the map
        \begin{equation*}
            (t, \omega) \mapsto \int_0^t L(t, s) b_s(\omega) ds
        \end{equation*}
        is c\`adl\`ag adapted process of locally bounded variation. Further, by Proposition \ref{proposition:RKHS=I^*} the process $W_t^*$ is a local martingale with respect to the filtration $\bF$ and we conclude that $\newX$ is an $\bF$-semimartingale. 

        Arguing as before, firstly we have that
        \begin{equation*}
            \newX_t = X_0 + \int_0^t L(t, s) dX_s 
            \quad \mbox{is measurable with respect to}\quad 
            \cF_t^X. 
        \end{equation*}
        Next, we define the random variable
        \begin{equation*}
            X_t^* := X_0 + \int_0^t K(t, s) d\newX_s 
            \quad \mbox{is measurable with respect to} \quad \cF_t^{\newX}. 
        \end{equation*}
        Therefore we have that $\cF_t^{X^*} \subseteq \cF_t^{\newX} \subseteq \cF_t^X$. 

        Proposition \ref{proposition:RKHS=I^*} implies that $\bP$-almost surely for every $t\in [0,T]$
        \begin{align*}
            \int_0^t b_s(\omega) ds = \int_0^t K(t, s) Q_s^b(\omega) ds 
            \iff
            &\int_0^t K(t, s) Q_s^b(\omega) ds = \int_0^t b_s(\omega) ds
        \end{align*}
        so that the stochastic process
        \begin{align*}
            X_t^* =& X_0 + \int_0^t K(t, s) Q_s^b ds + \int_0^t K(t, s) dW_s
            = X_0 + \int_0^t b_s ds + Z_t
        \end{align*}
        and for every $t\in [0,T]$
        \begin{equation*}
            \bE\Big[ \big| X_t - X_t^* \big|^2 \Big] = 0. 
        \end{equation*}
        Hence for every $t\in [0,T]$ we have that $\cF_t^X = \cF_t^{X^*}$ and we conclude. 
	\end{proof}

    The following Theorem is the key result of this work and implies Theorem \ref{theorem:fbm-Mimicking}:
	\begin{theorem}
		\label{brownianFilteringThm} 
        Suppose that Assumption \ref{assumption:Mimicking} holds. Let $X$ be a stochastic process of the form \eqref{eq:X} and let $\bF^X = (\cF_t^X)_{t \in [0,T]}$ be the filtration generated by $X$. 
		
		Then, there exists an extension 
		\begin{equation*}
			\big( \hat{\Omega}, \hat{\cF}, \hat{\bF}, \hat{\bP} \big)
			\quad \mbox{of}\quad
			\big( \Omega, \cF, (\cF_t^X)_{t\in [0,T]}, \bP \big)
		\end{equation*}
		carrying a Gaussian process with the same covariance as $Z$ and a weak solution to the SDE
		\begin{equation}
			\label{eq:brownianFilteringThm}
			\hat{X}_t = \hat{X}_0 + \int_0^t K(t, s) \tilde{Q}\big( s, \hat{X}[s] \big) ds + \hat{Z}_t.
		\end{equation}
        where
		\begin{equation}
			\label{eq:brownianFilteringThm-} 
			\tilde{Q}\big( t, X[t] \big) = \bE\Big[ Q^b_t \Big| X[t] \Big], 
		\end{equation}
		and $Q^b:[0,T] \times \Omega \to \bR^d$ is defined in \eqref{Q}. 
        
		Further, the law $\hat{\bP} \circ (\hat{X})^{-1}$ is equal to $\bP \circ (X)^{-1}$. 
	\end{theorem}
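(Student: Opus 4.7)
The strategy is to reduce this to the classical Brunick--Shreve mimicking theorem (recalled here as Proposition \ref{kavitaFilteringLemma}) by passing through the fundamental semimartingale $X^\dagger$ defined in \eqref{eq:newX}, then inverting the Volterra transform to return to the original Gaussian Volterra setting.

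Concretely, the plan is as follows. First, I would rewrite $X$ in its semimartingale form via the Volterra kernel $L$. By Proposition \ref{proposition:Filtrations-X,newX}, the process
\begin{equation*}
    X^\dagger_t = X_0 + \int_0^t Q^b_s\, ds + W^*_t
\end{equation*}
is an $\mathbf{F}$-semimartingale, generates the same filtration as $X$, and $W^*$ is an $\mathbf{F}$-Brownian motion. The integrability hypothesis $\mathbb{E}[\|\int_0^\cdot b_s\,ds\|_{\mathcal{H}_T}^2]<\infty$ translates, via \eqref{eq':lemma:Existence_Q}, into $\mathbb{E}[\int_0^T |Q^b_s|^2\,ds]<\infty$, which is exactly the $L^2$ hypothesis \eqref{eq:kavitaFilteringLemma} needed to apply the Brunick--Shreve result to $X^\dagger$.

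Second, I would apply Proposition \ref{kavitaFilteringLemma} to $X^\dagger$ with drift $Q^b$. This yields a progressively measurable functional $\tilde{Q}:[0,T]\times\mathcal{C}_T^d\to\mathbb{R}^d$ satisfying
\begin{equation*}
    \tilde{Q}\bigl(t, X^\dagger[t]\bigr) = \mathbb{E}\bigl[Q^b_t \,\big|\, X^\dagger[t]\bigr] \quad \bP\text{-a.s., a.e.\ }t,
\end{equation*}
and an extended probability space $(\check{\Omega},\check{\mathcal{F}},\check{\mathbf{F}},\check{\bP})$ carrying a Brownian motion $\hat{W}$ and a weak solution $\hat{X}^\dagger$ to
\begin{equation*}
    \hat{X}^\dagger_t = \hat{X}_0 + \int_0^t \tilde{Q}\bigl(s, \hat{X}^\dagger[s]\bigr)\,ds + \hat{W}_t
\end{equation*}
whose law agrees with that of $X^\dagger$. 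Since $\mathcal{F}^X_t = \mathcal{F}^{X^\dagger}_t$ by Proposition \ref{proposition:Filtrations-X,newX}, conditioning on $X[t]$ and on $X^\dagger[t]$ produce the same $\tilde{Q}$, so formula \eqref{eq:brownianFilteringThm-} is consistent.

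Third, I would invert the Volterra transform. Define
\begin{equation*}
    \hat{X}_t := \hat{X}_0 + \int_0^t K(t,s)\, d\hat{X}^\dagger_s
    = \hat{X}_0 + \int_0^t K(t,s)\,\tilde{Q}\bigl(s,\hat{X}^\dagger[s]\bigr)\,ds + \hat{Z}_t,
\end{equation*}
where $\hat{Z}_t := \int_0^t K(t,s)\,d\hat{W}_s$. The process $\hat{Z}$ is, by construction, a Gaussian Volterra process of the form \eqref{eq:GaussVolterra}, so by \eqref{eq:R-covariance} it has the same covariance as $Z$. Using Proposition \ref{proposition:Filtrations-X,newX} applied on the extended space, the filtrations of $\hat{X}$ and $\hat{X}^\dagger$ coincide and $\hat{X}[s]$ determines $\hat{X}^\dagger[s]$, so $\tilde{Q}(s,\hat{X}^\dagger[s])$ can be rewritten as a functional of $\hat{X}[s]$, giving the desired form \eqref{eq:brownianFilteringThm}. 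Equality of laws $\hat{\bP}\circ\hat{X}^{-1} = \bP\circ X^{-1}$ follows from the same bijective correspondence between processes of the form \eqref{eq:X} and those of the form \eqref{eq:newX}, combined with the law-equality between $\hat{X}^\dagger$ and $X^\dagger$ produced by Proposition \ref{kavitaFilteringLemma}.

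The main obstacle is bookkeeping the two-way translation between the conditioning on $X[t]$ and the conditioning on $X^\dagger[t]$: strictly speaking Brunick--Shreve delivers the conditional expectation with respect to the canonical process of the semimartingale, and the careful justification that this functional can be re-expressed as a measurable functional of $\hat{X}[t]$ relies crucially on the equality of filtrations in Proposition \ref{proposition:Filtrations-X,newX}, together with the measurability statement in Proposition \ref{proposition:ProgressiveMeasurability} ensuring $\tilde{Q}$ is progressively measurable as a functional of the path of $X$.
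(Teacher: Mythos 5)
Your proposal is correct and follows essentially the same route as the paper's own proof: transform to the fundamental semimartingale $X^\dagger$, apply Proposition \ref{kavitaFilteringLemma} to its drift $Q^b$, invert via the kernel $K$, and use the filtration equality from Proposition \ref{proposition:Filtrations-X,newX} to re-express the conditional drift as a functional of $X[t]$. Your explicit check that the hypothesis of Assumption \ref{assumption:Mimicking} delivers the $L^2$ condition \eqref{eq:kavitaFilteringLemma} via \eqref{eq':lemma:Existence_Q} is a detail the paper leaves implicit.
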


	\begin{proof} 
		Under Assumption \ref{assumption:Mimicking}, we define $\newX$ as in Equation \eqref{eq:newX} and by Proposition \ref{proposition:Filtrations-X,newX} $\newX$ is a $\bF^X$-semimartingale. 
		
		By Proposition \ref{kavitaFilteringLemma}, we can find a filtered probability space $\big( \hat{\Omega}, \hat{\cF}, (\hat{\cF}_t)_{t\in [0,T]}, \hat{\bP} \big)$ carrying a Brownian motion such that a weak solution to the SDE
		\begin{equation}
			\label{eq:brownianFilteringThm-pf} 
			\newX_t = X_0 + \int_0^t \bE\Big[ Q^b_s \Big| \newX[s] \Big] ds + W_t
		\end{equation}
		exists and has the same distribution as the stochastic process defined in Equation \eqref{eq:brownianFilteringThm}. 
		
		To conclude, we again apply Proposition \ref{proposition:Filtrations-X,newX} to observe that the stochastic process
		\begin{equation*}
			X_t = X_0 + \int_0^t K(t, s) d\newX_s = \int_0^t K(t, s) \bE\Big[ Q^b_s \Big| \newX[s] \Big] ds + Z_t. 
		\end{equation*}
        under $\hat{\bP}$ has the same distribution as $X$ under $\bP$. 
        
	    Thanks to Proposition \ref{proposition:Filtrations-X,newX}, we have that any measurable function of a path $\newX[t]$ can be written as another measurable function of $X[t]$. Thus for clear exposition, we denote the progressively measurable function $\check{b}:[0,T] \times \cC_T^d \to \bR^d$ defined by
		\begin{equation*}
			\check{Q}\big( t, \newX[t] \big) = \bE\Big[ Q_t^b \Big| \newX[t] \Big]
		\end{equation*}
		and (in order to define \eqref{eq:brownianFilteringThm-}) we define the progressively measurable function $\tilde{Q}: [0,T] \times \cC_T^d \to \bR^d$ by
		\begin{equation*}
			\tilde{Q}\big( t, X[t] \big) = \bE\Big[ Q_t^b\Big| X[t] \Big] = \check{Q}\Big( t, \newX[t] \Big). 
		\end{equation*}
		Putting this all together, we get the stochastic differential equation \eqref{eq:brownianFilteringThm}. As a solution to this is equivalent to a solution to Equation \eqref{eq:brownianFilteringThm-pf}, we conclude that a weak solution exists and has the same distribution as \eqref{eq:X}. 
	\end{proof}

    \subsection{Entropy and transport estimates}
    \label{subsec:EntropyEstimates}

    The additive Gaussian noises in the system of interacting equation \eqref{fBmParticleSystem} and the associated McKean Vlasov equation \eqref{fBmMV} ensure that the associated laws on pathspace have many favourable properties. In this section, we illustrate the properties that will be key to sharpening the rate of convergence for propagation of chaos. 

    In particular, a neat obersveration is that the ideas and techniques of this section are martingale in their nature, but the results apply to stochastic processes that are not semimartingales. 
    
    \subsubsection*{Entropy estimate}

    First, we establish some results for the distributions of SDEs with additive Gaussian noise:
    \begin{proposition}[Entropy Estimates]
        \label{fBmEntropy}
        Let $d \in \bN$ and let $i=1,2$. Let $(\cC_T^d, \RKHS_T, \gamma)$ be an abstract Wiener space carrying a Gaussian Volterra process with Volterra kernel $K:[0,T] \to L^2\big( [0,T]; \lin(\bR^d, \bR^d) \big)$ that satisfies Assumption \ref{assumption:VolterraK}.

        Let $b^i : [0,T] \times \cC_T^d \to \bR^d$ be progressively measurable and suppose that a weak solution to the SDE
        \begin{equation}
            dY^i_t = b^i\big( t, Y^i[t] \big) dt + dZ^i_t, \quad t \in [0, T] 
            \quad \mbox{with} \quad
            Y_0^i \sim P_0^i, \quad Z^i \sim \gamma
        \end{equation}
        exists and denote them by $P^i[t] \in \cP(\cC_t^d)$. Next, suppose that
        \begin{align}
            &P^i\bigg[ \Big\| \int_0^\cdot b_s^1 ds\Big\|_{\RKHS_T} < \infty, \quad \Big\| \int_0^\cdot b_s^2 ds \Big\|_{\RKHS_T} < \infty \bigg] =1, 
            \quad
            \label{eq:fBmEntropy-1}
            \bE^{P^i}\bigg[ \Big\| \int_0^\cdot b_s^1 - b_s^2 ds \Big\|_{\RKHS_T}^2 \bigg] < \infty. 
        \end{align}
        If $P_0^1<< P_0^2$, then $P^1<<P^2$ and for $Y[t] \in \cC_t^d$,  
        \begin{align}
            \nonumber
        	&\frac{dP^1}{dP^2} \big( Y[t] \big) 
            \\
            \label{ap:def:girsanov}
            &= 
            \frac{dP_0^1}{dP_0^2}(Y_0) \cdot 
            \exp\Bigg( \delta\bigg( \int_0^{\cdot\wedge t} K(\cdot, s) Q^{b^1-b^2}\big(s, Y \big) ds \bigg) - \tfrac{1}{2} \bigg\| \int_0^{\cdot} (b^1 - b^2)\big( s, Y[s] \big) ds \bigg\|_{\RKHS_t}^2 \Bigg)
        \end{align}
        where $\delta$ is the Malliavin divergence. 
        
        Further, suppose that $\bH\big[ P_0^1 \big| P_0^2 \big]< \infty$. Then
        \begin{equation*}
            \bH\Big[ P^1[t] \Big| P^2[t] \Big] = \bH\Big[ P_0^1 \Big| P_0^2 \Big] + \frac{1}{2} \bE^{P^1}\bigg[ \Big\| \int_0^{\cdot} b_s^1 - b_s^2 ds  \Big\|_{\RKHS_t}^2 \bigg]. 
        \end{equation*}
    \end{proposition}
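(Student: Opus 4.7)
The plan is to reduce to the classical Brownian/Girsanov setting via the fundamental semimartingale transformation of Section \ref{subsection:FundamentWienerMim}, and then translate the Brownian‐space Radon–Nikodym derivative back onto Wiener space using the isomorphisms $\scJ, \scK$ together with the identity \eqref{eq':lemma:Existence_Q}. Concretely, I would set
\begin{equation*}
Y^{\dagger, i}_t = Y_0^i + \int_0^t L(t,s)\, dY^i_s = Y_0^i + \int_0^t Q^{b^i}\bigl(s, Y^i[s]\bigr)\, ds + W^{*,i}_t,
\end{equation*}
so that under $P^i$ the process $W^{*,i}$ is a Brownian motion by Proposition \ref{proposition:RKHS=I^*}, and $(Y^i, Y^{\dagger, i})$ generate the same filtration by Proposition \ref{proposition:Filtrations-X,newX}. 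The hypothesis \eqref{eq:fBmEntropy-1} combined with \eqref{eq':lemma:Existence_Q} translates directly into the square‐integrability $\bE^{P^i}[\int_0^T |Q^{b^1} - Q^{b^2}|^2 ds] < \infty$, which is precisely the $L^2$ condition needed for the exponential martingale associated with the drift change $Q^{b^1} - Q^{b^2}$ to give a bona fide density.

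Next I would invoke the classical Girsanov theorem on the Brownian space carrying $W^{*,2}$: the law of the semimartingale $Y^{\dagger}$ under $P^1$ is absolutely continuous with respect to its law under $P^2$ (provided $P_0^1 \ll P_0^2$), with density
\begin{equation*}
\frac{dP^1_\dagger}{dP^2_\dagger}\bigl(Y^\dagger[t]\bigr) = \frac{dP_0^1}{dP_0^2}(Y_0) \cdot \exp\Bigl( \int_0^t \bigl(Q^{b^1} - Q^{b^2}\bigr)(s, Y[s])\, dW^{*,2}_s - \tfrac{1}{2}\int_0^t |Q^{b^1} - Q^{b^2}|^2\, ds \Bigr).
\end{equation*}
Since $Y$ and $Y^\dagger$ determine the same $\sigma$-algebras, this serves as $dP^1/dP^2$ on $\cC_t^d$. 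To reach the stated form \eqref{ap:def:girsanov}, I would identify the Itô integral with the Malliavin divergence: under the isometry $\scK: \cV_T \to \RKHS_T$, the element $\int_0^{\cdot\wedge t} K(\cdot, s) Q^{b^1 - b^2}(s, Y)\, ds$ lives in $\RKHS_t$ and its divergence on Wiener space equals $\int_0^t Q^{b^1-b^2}\, dW^{*,2}_s$. Similarly, \eqref{eq':lemma:Existence_Q} gives $\int_0^t |Q^{b^1-b^2}|^2\, ds = \|\int_0^\cdot (b^1 - b^2)(s, Y[s])\, ds\|^2_{\RKHS_t}$, completing the identification.

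For the entropy identity, I would take logarithms of \eqref{ap:def:girsanov} and integrate against $P^1$. Under $P^1$, Girsanov provides the Brownian motion $W^{*,1}_t = W^{*,2}_t + \int_0^t (Q^{b^1}-Q^{b^2})\, ds$, and rewriting the stochastic integral in terms of $W^{*,1}$ yields
\begin{equation*}
\bE^{P^1}\Bigl[ \int_0^t (Q^{b^1} - Q^{b^2})\, dW^{*,2}_s \Bigr] = \bE^{P^1}\Bigl[ \int_0^t |Q^{b^1}-Q^{b^2}|^2\, ds \Bigr],
\end{equation*}
since the $W^{*,1}$-integral has zero mean. Combining this with the contribution $\bH[P_0^1 | P_0^2]$ from the initial density and the $-\tfrac{1}{2}\|\cdot\|^2$ quadratic term produces the net factor $\tfrac{1}{2}\bE^{P^1}[\|\int_0^\cdot (b^1-b^2)\, ds\|^2_{\RKHS_t}]$, which is the claimed formula.

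The main obstacle is making rigorous the identification of the Brownian stochastic integral with the Malliavin divergence on Wiener space: one must ensure that the integrand $\int_0^{\cdot\wedge t} K(\cdot, s) Q^{b^1-b^2}(s, Y)\, ds$ actually defines an element of the domain of $\delta$ and that its divergence is adapted (so that it coincides with an Itô integral against $W^{*,2}$). This is handled by approximating $Q^{b^1-b^2}$ by progressively measurable simple processes, using Proposition \ref{proposition:ProgressiveMeasurability} to guarantee adaptedness of the approximants, and passing to the limit via the $L^2$ control from \eqref{eq:fBmEntropy-1} together with \eqref{eq':lemma:Existence_Q}; the Novikov/exponential‐integrability issue needed to turn the local martingale property into a true density is secured by the same square-integrability hypothesis (applied after a standard localisation via stopping times).
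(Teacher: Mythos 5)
Your overall strategy is the same as the paper's: pass to the fundamental semimartingale $Y^{\dagger,i}_t = Y_0^i + \int_0^t Q^{b^i}_s\,ds + W^{*,i}_t$, use the fact that $Y^i \mapsto Y^{\dagger,i}$ is a filtration-preserving measurable bijection (Propositions \ref{proposition:filtration1} and \ref{proposition:Filtrations-X,newX}), apply a Girsanov-type result on the Brownian side, and translate back using \eqref{eq':lemma:Existence_Q}. The paper's proof then transfers the entropy identity via the data-processing inequality applied in both directions (rather than by taking logarithms of the density as you do), and — crucially — obtains both the absolute continuity and the entropy formula in one stroke by citing L\'eonard's Girsanov theorem under a finite entropy condition (\cite{Leonard2012Girsanov}, Theorem 2.3), which is stated precisely under the square-integrability hypothesis \eqref{eq:fBmEntropy-1}.

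The one genuine gap in your argument is the final claim that ``the Novikov/exponential-integrability issue \dots is secured by the same square-integrability hypothesis.'' This is false as stated: $\bE^{P^i}\bigl[\int_0^T |Q^{b^1}_s - Q^{b^2}_s|^2\,ds\bigr] < \infty$ does not imply Novikov's condition $\bE\bigl[\exp\bigl(\tfrac12\int_0^T |Q^{b^1}_s-Q^{b^2}_s|^2\,ds\bigr)\bigr]<\infty$, and localisation by stopping times only yields that the stopped exponentials are martingales — it does not upgrade the exponential local martingale to a uniformly integrable martingale, which is what you need for the density to integrate to one. This is exactly the obstruction that the finite-entropy Girsanov framework of L\'eonard is designed to circumvent: because \emph{both} weak solutions are assumed to exist and the relative drift is square-integrable under $P^1$, one gets $P^1 \ll P^2$ and the density formula without any Novikov-type condition. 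Replacing your Novikov step with an appeal to that result (or an equivalent argument exploiting the existence of both weak solutions) closes the gap; the remainder of your proof, including the identification of the It\^o integral with the Malliavin divergence and the zero-mean argument under $P^1$ for the entropy identity, is sound.
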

    
    In particular, Proposition \ref{fBmEntropy} implies Proposition \ref{fBmEntropy*}:
    \begin{proof}
        We denote by $P^{\dagger,1}, P^{\dagger,2} \in \cP(\cC_T^d)$ the law of the stochastic process
        \begin{align*}
            Y_t^{\dagger,i} = Y_0^i + \int_0^t L(t, s) dY_s^i = Y_0^i + \int_0^t Q_s^{b^i}\big( Y^{i}[s] \big) ds + W_t^i, \quad t\in [0,T]. 
        \end{align*}
        Clearly, $W$ is a local martingale and the process $\int_0^t Q^b_s(Y^i) ds$ is c\`adl\`ag, locally bounded variation and adapted to the filtration $\cF_t^Y$. Thanks to Proposition \ref{proposition:Filtrations-X,newX}, the filtrations $\bF^Y$ and $\bF^{Y^{\dagger}}$ are equal and both are contained in the filtration $\bF$. 
        
        Thus the dynamics $Y^{\dagger, i}$ are a semimartingale and can be written as a measurable function of the dynamics $Y^{i}$. Similarly, 
        \begin{align*}
            Y_t^{i} = Y_0^i + \int_0^t K(t, s) dY_s^{\dagger, i} = Y_0^i + \int_0^t b^i\big(s, Y^{i} \big) ds + Z_t^i, \quad t\in [0,T],  
        \end{align*}
        so that the dynamics of $Y^i$ can be written as a measurable function of the dynamics of $Y^{\dagger, i}$. By the data processing inequality and the bijective nature of the measurable maps that transform $P^{i}$ to $P^{\dagger, i}$ and back, we conclude that
        \begin{align*}
            \bH\big[ P^{\dagger,1}[t] \big| P^{\dagger,2}[t] \big] \leq \bH\big[ P^1[t] \big| P^2[t]\big] 
            \quad \mbox{and}\quad
            \bH\big[ P^1[t] \big| P^2[t]\big] \leq  \bH\big[ P^{\dagger,1}[t] \big| P^{\dagger,2}[t] \big]
        \end{align*}
        so that 
        \begin{equation}
            \label{eq:fBmEntropy-Proof1}
            \bH\big[ P^{\dagger,1}[t] \big| P^{\dagger,2}[t] \big] = \bH\big[ P^1[t] \big| P^2[t]\big]. 
        \end{equation}
        
        As $P^{\dagger, i}$ are the laws of semimartingales and Equation \eqref{eq:fBmEntropy-1} holds, we apply \cite{Leonard2012Girsanov}*{Theorem 2.3} to get that
        \begin{equation}
            \label{eq:fBmEntropy-Proof2}
            \bH\big[ P^{\dagger,1}[t] \big| P^{\dagger,2}[t] \big] = \bH\big[ P_0^1 \big| P_0^2 \big] + \frac{1}{2} \bE\bigg[ \int_0^t \Big| Q^{b^1}\big(s, Y^{1} \big) - Q^{b^2}\big(s, Y^{1} \big) \Big|^2 ds \bigg]. 
        \end{equation}
        To conclude, we just combine Equation \eqref{eq:fBmEntropy-Proof1} with Equation \eqref{eq:fBmEntropy-Proof2} and apply Equation \eqref{eq':lemma:Existence_Q}. 
    \end{proof}
    
    \subsubsection*{Talagrand Inqualities}

    Our interest in Talagrand inequalities is derived from the fact that in Theorem \ref{qPropChaosfBM} we estimate the relative entropy between the interactive particle system and the associated McKean-Vlasov equation. A Talagrand inequality allows us to link this directly to the rate of convergence of the Wasserstein distance between the corresponding laws. 

    \begin{proposition}
        \label{proposition:TalagrandIQ}
        Let $(\cC_{0,T}^d, \RKHS_T, \gamma)$ be an abstract Wiener space carrying a Gaussian Volterra process with Volterra kernel $K:[0,T] \to L^2\big( [0,T]; \lin(\bR^d, \bR^d) \big)$ that satisfies Assumption \ref{assumption:VolterraK}.

        Suppose that \emph{either}:
        \begin{enumerate}[label=(A.\roman*)]
            \item 
            \label{enum:proposition:TalagrandIQ-1}
            The measure $P_0 \in \cT^1(\bR^d)$ and $b : [0,T] \times \cC_T^d \to \bR^d$ is progressively measurable and satisfies that there exists $M \in L^1\big( [0,T]; \bR \big)$ such that
            \begin{equation*}
                \big| b(t, X) - b(t, Y) \big| \leq M_t \cdot \big\| X - Y \big\|_{\infty, t}
                \quad \mbox{and}\quad
                \int_0^t \big| b(s, 0\big) \big| ds < \infty; 
            \end{equation*}
            \item 
            \label{enum:proposition:TalagrandIQ-2}
            The measure $P_0 \in \cP(\bR^d)$ and there exists $\epsilon>0$ such that
            \begin{equation}
                \label{eq:proposition:TalagrandIQ-init}
                \int_{\bR^d} \exp\Big( \epsilon \big| x \big|^2 \Big) P_0(dx) < \infty. 
            \end{equation}
            Further, let $b : [0,T] \times \cC_T^d \to \bR^d$ is progressively measurable and satisfies that there exists $M \in L^2\big( [0,T]; \bR \big)$ such that 
            \begin{equation}
                \label{eq:proposition:TalagrandIQ}
                \Big| Q^b\big( t, X[t] \big) \Big| \leq M_t \Big( 1 + \big\| X \big\|_{\infty, t} \Big);
            \end{equation}
        \end{enumerate}
        Then there exists a unique weak solution to the SDE
        \begin{equation}
            \label{eq:proposition:TalagrandIQ-exuq}
            dX_t = b\big(t, X[t] \big) dt + dZ_t, \quad t\in [0,T] \quad X_0 \sim P_0, \quad Z \sim \gamma,
        \end{equation}
        and for every $t\in [0,T]$ the weak solution satisfies that $P[t] \in \cT_{\infty, t}^1\big( \cC_t^d \big)$. 
    \end{proposition}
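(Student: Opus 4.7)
The overall plan is to establish weak existence and uniqueness separately under the two alternative hypotheses, then derive the Talagrand property from the Bobkov--G\"otze / Djellout--Guillin--Wu sufficient condition \eqref{eq:theorem:Djellout} by proving exponential square-integrability of $\|X\|_{\infty,t}$ under $P[t]$. Under \ref{enum:proposition:TalagrandIQ-1}, the sup-norm Lipschitz condition with weight $M \in L^1([0,T])$ and $\int_0^T |b(s,0)|ds < \infty$ allows a direct Picard iteration in $\cC_T^d$: setting $X^{(0)}_t = X_0 + Z_t$ and $X^{(n+1)}_t = X_0 + \int_0^t b(s, X^{(n)}[s]) ds + Z_t$, the bound $\|X^{(n+1)} - X^{(n)}\|_{\infty,t} \leq \int_0^t M_s \|X^{(n)} - X^{(n-1)}\|_{\infty,s} ds$ together with an $L^1$-Gronwall argument yields a unique strong (hence weak) solution for $\bP$-a.e.\ realisation of $Z$.

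Under \ref{enum:proposition:TalagrandIQ-2} we pass to the fundamental semimartingale $X^\dagger_t = X_0 + \int_0^t Q^b(s, X[s]) ds + W^*_t$ from Proposition \ref{proposition:Filtrations-X,newX}. On the reference Wiener space where $X = X_0 + Z$ has law $P_0 \otimes \gamma$, we apply Girsanov to $W^*$ with density
\begin{equation*}
    \scE_t = \exp\Big( \int_0^t Q^b(s, X[s]) dW^*_s - \tfrac{1}{2} \int_0^t \big| Q^b(s, X[s]) \big|^2 ds \Big).
\end{equation*}
The linear growth \eqref{eq:proposition:TalagrandIQ} gives $\int_0^T |Q^b|^2 ds \leq \|M\|_{L^2}^2(1 + \|X\|_{\infty,T})^2$, and Fernique's theorem applied to the Gaussian Volterra process $Z$, combined with the exponential moment \eqref{eq:proposition:TalagrandIQ-init} of $P_0$, produces a finite exponential moment on each piece of a sufficiently fine partition of $[0,T]$; this verifies Novikov's condition piecewise and makes $\scE$ a genuine martingale, delivering the weak solution. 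Weak uniqueness is then obtained by applying Proposition \ref{fBmEntropy} to any two candidate weak solutions.

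For the Talagrand inequality, by \eqref{eq:theorem:Djellout} it suffices to exhibit some $\delta > 0$ with $\bE^{P[t]}[\exp(\delta \|X\|_{\infty,t}^2)] < \infty$, since $\|x - y\|_{\infty,t}^2 \leq 2(\|x\|_{\infty,t}^2 + \|y\|_{\infty,t}^2)$ reduces the required two-variable integrability to this single-variable estimate. In case \ref{enum:proposition:TalagrandIQ-1} the bound $\|X\|_{\infty,t} \leq |X_0| + \int_0^t |b(s,0)| ds + \int_0^t M_s \|X\|_{\infty,s} ds + \|Z\|_{\infty,t}$ and Gronwall give $\|X\|_{\infty,t} \leq C_t(1 + |X_0| + \|Z\|_{\infty,t})$, and the desired exponential moment follows from Fernique for $Z$ together with the Gaussian tail implicit in $P_0 \in \cT^1(\bR^d)$. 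In case \ref{enum:proposition:TalagrandIQ-2} the same strategy is applied to the representation $X_t = X_0 + \int_0^t K(t,s) Q^b(s, X[s]) ds + Z_t$, where we iterate a Volterra--Gronwall estimate across a partition of $[0,T]$ on which the relevant operator norm of the convolution is small enough to be absorbed by the Gaussian moment of $Z$.

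The main obstacle is the exponential moment step under \ref{enum:proposition:TalagrandIQ-2}: because $Q^b$ is controlled by $\|X\|_{\infty,s}$ only after convolution with the possibly singular kernel $K$, a naive Gronwall argument yields only polynomial moments. The remedy, which simultaneously validates Novikov's condition for the Girsanov step above and the exponential moment for Djellout's criterion, is to partition $[0,T]$ into subintervals on which $\|M\|_{L^2}$ is small enough that the linear-growth term can be absorbed into the Gaussian exponential moment of $Z$, and then to propagate the bound across intervals by conditioning on the preceding segment and using the tower property.
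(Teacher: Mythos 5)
Your proposal is essentially sound and shares the paper's core strategy for the Talagrand part: verify Djellout's criterion \eqref{eq:theorem:Djellout} by producing a single exponential square-moment $\bE^{P[t]}[\exp(\delta\|X\|_{\infty,t}^2)]<\infty$, obtained from a pathwise Gr\"onwall bound plus Fernique's theorem for $Z$ and the Gaussian integrability of $P_0$. Two points of divergence are worth recording. First, the paper does not reprove weak existence and uniqueness at all --- it cites \cite{Hu2023Locally} and devotes the proof entirely to the transport inequality --- whereas you reconstruct existence via Picard iteration under \ref{enum:proposition:TalagrandIQ-1} and via Girsanov on the fundamental semimartingale with a piecewise Novikov check under \ref{enum:proposition:TalagrandIQ-2}; that reconstruction is coherent in outline (and your use of Proposition \ref{fBmEntropy} for uniqueness is a reasonable shortcut), but it is work the statement does not require of you here. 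Second, and more substantively, the ``main obstacle'' you identify under \ref{enum:proposition:TalagrandIQ-2} is not actually an obstacle, and your partitioning remedy, while workable, is unnecessary. The paper's trick is that the drift contribution to the sup-norm is controlled \emph{without ever touching the singular kernel pointwise}: by Equation \eqref{eq':lemma:Existence_Q} the path $\int_0^\cdot K(\cdot,s)Q^b_s\,ds$ lies in $\RKHS_t$ with norm $(\int_0^t|Q^b_s|^2ds)^{1/2}$, and the reproducing property bounds its sup-norm by a constant times that $L^2$-norm. Feeding \eqref{eq:proposition:TalagrandIQ} into this gives
\begin{equation*}
\|X\|_{\infty,t}^2 \leq 3\Big(|X_0|^2+\|Z\|_{\infty,t}^2 + 2\int_0^t |M_s|^2\big(1+\|X\|_{\infty,s}^2\big)\,ds\Big),
\end{equation*}
and a single Gr\"onwall application with the integrable weight $|M_s|^2$ yields a pathwise \emph{affine} bound in $|X_0|^2+\|Z\|_{\infty,t}^2$ with a deterministic multiplicative constant $\exp(c\int_0^t|M_s|^2ds)$. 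Since the Talagrand constant $\gamma$ may be arbitrary, one simply shrinks $\delta$ to fall below Fernique's threshold; there is no need to make $\|M\|_{L^2}$ small on subintervals or to propagate moments across a partition by conditioning. Your route buys nothing over this and costs a genuinely delicate (and only sketched) tower-property iteration; the paper's route is the one to prefer.
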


    \begin{proof}
        The proof of weak existence and uniqueness is established in \cite{Hu2023Locally} and we focus our attention on proving that a Talagrand inequality is satisfied. 
        
        Firstly, under \ref{enum:proposition:TalagrandIQ-1} we remark that for the product probability space $\Omega \times \tilde{\Omega}$ carrying two independent solutions to Equation \eqref{eq:proposition:TalagrandIQ-exuq}, we have that
        \begin{align*}
            \big\| X - \tilde{X} \big\|_{\infty, t} \leq \big| X_0 - \tilde{X}_0 \big| + \big\| Z - \tilde{Z} \big\|_{\infty, t} + \int_0^t M_s \cdot \big\| X - \tilde{X} \big\|_{\infty, s} ds
        \end{align*}
        Applying Gr\"onwall's inequality, we conclude that
        \begin{equation*}
            \big\| X - \tilde{X} \big\|_{\infty, t} \leq \bigg( \big| X_0 - \tilde{X}_0 \big| + \big\| Z - \tilde{Z} \big\|_{\infty, t} \bigg) \cdot \exp\Big( \int_0^t M_s ds \Big) 
        \end{equation*}
        so that
        \begin{equation*}
            \big\| X - \tilde{X} \big\|_{\infty, t}^2 \leq 2\bigg( \big| X_0 - \tilde{X}_0 \big|^2 + \big\| Z - \tilde{Z} \big\|_{\infty, t}^2 \bigg) \cdot \exp\Big( 2 \int_0^t |M_s| ds \Big) 
        \end{equation*}
        Therefore, 
        \begin{align*}
            \bE\times \tilde{\bE}\bigg[ \exp\bigg( \delta \big\| X - \tilde{X} \big\|_{\infty, t}^2 \bigg) \bigg] 
            \leq& 
            \tfrac{1}{2} \cdot \bE\times \tilde{\bE}\bigg[ \exp\bigg( \delta \cdot 4 \exp\Big( 2\int_0^t |M_s| ds \Big) \cdot \big| X_0 - \tilde{X}_0 \big|^2 \bigg) \bigg]
            \\
            &+ \tfrac{1}{2} \cdot \bE\times \tilde{\bE}\bigg[ \exp\bigg( \delta \cdot 4 \exp\Big( 2\int_0^t |M_s| ds\Big) \cdot \big\| Z - \tilde{Z} \big\|_{\infty, t}^2 \bigg) \bigg]
        \end{align*}
        and $P_0 \in \cT^1(\bR^d)$ and $Z$ is Gaussian allows us to conclude. 
        
        Secondly, under \ref{enum:proposition:TalagrandIQ-2} we have that
        \begin{align*}
            \big\| X - \tilde{X} \big\|_{\infty, T} \leq \big| X_0 - \tilde{X}_0 \big| + \big\| Z - \tilde{Z} \big\|_{\infty, T} + \bigg\| \int_0^{\cdot} b\big( s, X\big) - b\big(s, \tilde{X} \big) ds \bigg\|_{\RKHS_T}
        \end{align*}
        so that
        \begin{align*}
            \big\| X - \tilde{X} \big\|_{\infty, T}^2 \leq 3\bigg( \big| X_0 - \tilde{X}_0 \big|^2 + \big\| Z - \tilde{Z} \big\|_{\infty, T}^2 + \bigg\| \int_0^{\cdot} b\big( s, X\big) - b\big(s, \tilde{X} \big) ds \bigg\|_{\RKHS_T}^2 \bigg)
        \end{align*}
        and
        \begin{align*}
            \exp\Big( \delta \cdot \big\| X - \tilde{X} \big\|_{\infty, T}^2 \Big) 
            \leq& 
            \frac{\exp\Big( 9 \delta \cdot \big| X_0 - \tilde{X}_0 \big|^2 \Big)}{3}
            +
            \frac{\exp\Big( 9 \delta \cdot \big\| Z - \tilde{Z} \big\|_{\infty, T}^2 \Big)}{3}
            \\
            &+
            \frac{\exp\bigg( 9 \delta \cdot \bigg\| \int_0^\cdot b\big(s, X) - b\big(s, \tilde{X} \big) ds \bigg\|_{\RKHS_T}^2 \bigg)}{3}. 
        \end{align*}
        Equation \eqref{eq:proposition:TalagrandIQ-init} implies that $P_0 \in \cT^1(\bR^d)$ and $\gamma$ is Gaussian so we conclude via Fernique's Theroem that there exists a constant $C>0$ such that for every $\delta <C$, 
        \begin{equation*}
            \bE\times \tilde{\bE}\bigg[ \exp\Big( \delta \cdot \big| X_0 - \tilde{X}_0 \big|^2 \Big) \bigg]< \infty 
            \quad \mbox{and}\quad
            \bE\times \tilde{\bE}\bigg[ \exp\Big( \delta \cdot \big\| Z - \tilde{Z} \big\|_{\infty, T}^2 \Big) \bigg]< \infty. 
        \end{equation*}
        Thanks to Equation \eqref{eq:proposition:TalagrandIQ} and Equation \eqref{eq':lemma:Existence_Q}, we have that
        \begin{align*}
            \bigg\| \int_0^{\cdot} b\big(s, X\big) - b\big( s, \tilde{X} \big) ds \bigg\|_{\RKHS_T}^2 =& \int_0^T \Big| Q^b\big(s, X \big) - Q^b\big(s, \tilde{X} \big) \Big|^2 ds
            \\
            \leq& 2\int_0^T |M_t|^2 dt \cdot \Big( 2+ \big\| X \big\|_{\infty, T}^2 + \big\| \tilde{X} \big\|_{\infty, T}^2 \Big). 
        \end{align*}
        To conclude, note that
        \begin{align*}
            \big\| X \big\|_{\infty, t}^2 \leq& 3 \Big( \big| X_0 \big|^2 + \big\| Z \big\|_{\infty, t}^2 + \int_0^t \big| Q^b(s, X) \big|^2 ds \Big)
            \\
            \leq& 3 \bigg( \big| X_0 \big|^2 + \big\| Z \big\|_{\infty, t}^2 + 2 \int_0^t |M_s|^2 \cdot \Big( 1+ \big\| X \big\|_{\infty, s}^2 \Big) ds \bigg)
        \end{align*}
        so that by Gr\"onwall's inequality
        \begin{equation*}
            \big\| X \big\|_{\infty, t}^2 \leq 3\Big( \big| X_0 \big|^2 + \big\| Z \big\|_{\infty, t}^2 + 2\int_0^t |M_s|^2 ds \Big) \exp\Big( 2\int_0^t |M_s |^2 ds \Big). 
        \end{equation*}
        Therefore, for an appropriately small choice of $\delta$ we conclude that
        \begin{equation*}
            \bE\Big[ \exp\Big( \delta \cdot \big\| X \big\|_{\infty, t}^2 \Big) \Big]< \infty
        \end{equation*}
        and we conclude that for $\delta$ small enough
        \begin{equation*}
            \bE\Bigg[ \exp\bigg( \delta \cdot \bigg\| \int_0^{\cdot} b\big(s, X \big) - b\big(s, \tilde{X} \big) ds \bigg\|_{\RKHS_T}^2 \bigg) \Bigg] < \infty. 
        \end{equation*}
        Hence, for $\delta$ small enough
        \begin{equation*}
            \bE\times \tilde{\bE}\bigg[ \exp\Big( \delta \cdot \big\| X - \tilde{X} \big\|_{\infty, T}^2 \Big) \bigg] < \infty
        \end{equation*}
        and we conclude that $P \in \cT_{\infty}^1(\cC_T^d)$. 
    \end{proof}
	
	\section{Quantitative propagation of chaos}
    \label{section:QPoC}

    We now prove \emph{quantitative propagation of chaos} results for collections of interacting stochastic processes with additive Gaussian noises. In Section \ref{subsec:OriginalRes}, we highlight the previous techniques used to prove quantitative propagation of chaos for systems of interacting stochastic processes in \cite{lacker2022Hierarchies}. In Section \ref{subsec:QPoC}, we establish some necessary results and prove Theorem \ref{theorem:qPropChaos}. Finally, in Section \ref{subsection:QPoCExample} we provide an example to show that the rate of convergence we establish is optimal. 
    
	\subsection{Strategy for interacting equations}
    \label{subsec:OriginalRes}

    Let us now turn our attention to a collection of $n$ (path dependent) SDEs with addititive independent Brownian motions that are coupled together via a dependency on the empirical measure
	\begin{equation}
		\label{danParticleSystem}
		dX_t^{i, n} = \bigg( b_0\big( t, X^{i,n}[t] \big) + \tfrac{1}{n-1}\sum_{\substack{j=1 \\j \neq i}}^n b\big( t, X^{i,n}[t], X^{j,n}[t] \big) \bigg) dt + dW_t^{i,n}, \quad i = 1, ..., n.
	\end{equation}
	On the other hand, the associated dynamics of the McKean-Vlasov SDE that describes the limiting marginal dynamics as $n\to \infty$ is given by
	\begin{equation}
		\label{danMV}
		dX_t = \bigg( b_0\big( t, X[t] \big) + \Big\langle \mu[t], b\big( t, X[t], \cdot \big) \Big\rangle \bigg) dt + dW_t \quad \mu[t] = \bP \circ \big( X[t] \big)^{-1}
	\end{equation}

    \begin{definition}
        \label{definition:well-posed}
        Let $n\in \bN$ and let $\overline{b}:[0,T] \times (\cC_T^d)^{\times n} \to (\bR^d)^{\times n}$ be progressively measurable. We say that the stochastic differential equation
        \begin{equation*}
            dX_t = \overline{b}\big( t, X[t] \big) dt + dW_t
        \end{equation*}
        is \emph{well-posed} if for every $(t_0, z) \in [0, T) \times (\cC_T^d)^{\times n}$, there exists a unique in law solution to the stochastic differential equation
        \begin{equation*}
            dX_t = \overline{b}\big( t, X[t] \big) dt + dW_t, \quad t\in (t_0, T] 
            \quad \mbox{and}\quad
            X_t = z_t \quad t\in [0,t_0]. 
        \end{equation*}
    \end{definition}
    
	The main result of \cite{lacker2022Hierarchies} establishes an optimal rate of convergence for Equation \eqref{eq:ConvEmp-McV}:
	\begin{theorem}[\cite{lacker2022Hierarchies}*{Theorem 2.2}]
        Suppose that there exists a weak solution to \eqref{danParticleSystem} which we denote by $P^{(n)} \in \cP\big( (\cC_T^d)^{\times n} \big)$ and additionally suppose that $P^{(n)}$ is exchangeable. Suppose that there exists a weak solution to the McKean-Vlasov equation \eqref{danMV} with law $\mu \in \cP\big( C_T^d \big)$. 
  
        Further, suppose that
		\begin{enumerate}
			\item Suppose that for every $k\in \bN$ and $(t_0, z) \in [0,T] \times (\cC_T^{d})^{\times k}$, there exists a unique in law weak solution to the stochastic differential equation
            \begin{equation*}
                d\overline{\rx}_t^{k} = \overline{b}^{\otimes k}\big( t, \overline{\rx}^{k}[t] \big) dt + dW_t^{k} \quad t\in (t_0, T] 
                \quad \mbox{and} \quad 
                \overline{X}^k_t = z_t \quad \mbox{for} \quad t\in [0, t_0]
            \end{equation*}
            where $\overline{\rx}^k[t] = \big( \overline{X}^{i,k}[t] \big)_{i=1, ..., k}$
			\begin{align*}
				\overline{b}^{\otimes k}\big( t, \overline{\rx}^k[t] \big) = \bigotimes_{i=1}^k \bigg( b_0\big( t, \overline{X}^{i,k}[t] \big) + \Big\langle \mu[t], b\big( t, \overline{X}^{i, k}[t], \cdot \big) \Big\rangle \bigg).
			\end{align*}
			\item Square Integrability: 
			\begin{equation}
                \label{danSqInt}
				M := \sup_{t \in [0, T]} \int_{\cC_T^d \times \cC_T^d} \Big| b(t, x, y) - \big\langle \mu, b(t, x, \cdot) \big\rangle \Big|^2 P^{(2)}(dx,dy) < \infty
			\end{equation}
			\item There exists $\gamma>0$ such that for any $t\in (0, T)$, $x \in \cC_T^d$, $\nu \in \cP( \cC_T^d)$ such that $b(t, x[t], \cdot) \in L^1(\nu)$, 
			\begin{equation}
				\label{danTransport}
				\Big| \Big\langle \mu - \nu, b\big( t, x[t], \cdot \big) \Big\rangle \Big|^2 \leq \gamma \cdot \bH\big[ \nu | \mu],
			\end{equation}
			\item There exists $C_0>0$ such that
			\begin{equation}
				\bH\big[ P_0^{(n,k)} \big| \mu_0^{\otimes k} \big] \leq C_0 \frac{k^2}{n^2}.
			\end{equation}
		\end{enumerate}
        Then for any $n \geq 6\exp(\gamma T)$ and $k \in \{1, ..., n\}$, we have that
        \begin{equation}
            \bH \big[ P^{(n,k)}[T] \big| \mu^{\otimes k}[T] \big] \leq 2C\frac{k^2}{n^2} + C \exp \bigg( -2n(e^{-\gamma T - \frac{k}{n}})^2_+\bigg)
        \end{equation}
        where
        \begin{equation*}
            C:= 8\Big( C_0 + (1+\gamma)MT \Big) \exp\big( 6\gamma T \big)
        \end{equation*}
	\end{theorem}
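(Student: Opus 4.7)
The plan is to follow the hierarchical strategy of \cite{lacker2022Hierarchies}, which works directly in the Brownian setting here. The idea is to derive a recursive inequality linking $E^{(n,k)}(t) := \bH[P^{(n,k)}[t] \,|\, \mu^{\otimes k}[t]]$ for successive values of $k$, and then solve it to extract the sharp $k^2/n^2$ rate.

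First I would identify the $k$-marginal dynamics of \eqref{danParticleSystem}. Applying Proposition \ref{kavitaFilteringLemma} (the Brownian projection/mimicking result) to the first $k$ coordinates of the exchangeable system, one obtains that $P^{(n,k)}$ is the law of the weak solution to an SDE driven by a $k$-dimensional Brownian motion, with drift
\begin{equation*}
    \check{b}_i\big(t, \rx^{n,k}[t]\big)
    = b_0\big(t, X^{i,n}[t]\big) + \bE\Big[\tfrac{1}{n-1}\sum_{j\neq i} b\big(t, X^{i,n}[t], X^{j,n}[t]\big) \,\Big|\, \rx^{n,k}[t]\Big].
\end{equation*}
The tensorized McKean-Vlasov system with law $\mu^{\otimes k}$ has drift $b_0(t, X^i) + \langle \mu[t], b(t, X^i, \cdot)\rangle$ on the $i$-th coordinate. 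Both laws can now be compared by Girsanov on the common Brownian filtration.

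Next I would apply the chain rule for relative entropy,
\begin{equation*}
    E^{(n,k+1)}(t)
    = E^{(n,k)}(t) + \bE^{P^{(n,k)}}\!\!\left[\bH\big[P^{(n,k+1|k)}[t] \,\big|\, \mu[t]\big]\right],
\end{equation*}
together with the standard entropy formula for Brownian SDEs (a special case of Proposition \ref{fBmEntropy} with $H=1/2$) to rewrite the conditional entropy as an $L^2$-integral of the drift discrepancy. Splitting the empirical average in $\check{b}_i$ into the contribution of the $(k{+}1)$-th particle (which reveals new information and provides the key cancellation yielding the $k^2/n^2$ rate rather than $k/n$) and the remaining particles (whose conditional mean is controlled by the transport inequality \eqref{danTransport}), one arrives at a bound of the form
\begin{equation*}
    E^{(n,k)}(t) \;\le\; \bH\big[P_0^{(n,k)} \,\big|\, \mu_0^{\otimes k}\big] + \tfrac{C \gamma k^2}{n^2}\,t + \gamma \int_0^t \tfrac{k}{n-k}\, E^{(n,k+1)}(s)\, ds,
\end{equation*}
which is the Lacker hierarchy for this model, with $C$ depending on $M$ from \eqref{danSqInt}.

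Finally, iterating the recursion down from $k$ to $n$ and using $E^{(n,n)}(t) \le C_0 + (1+\gamma) M T \cdot n$ as the terminal input produces the stated leading term $2C k^2/n^2$. The second term $C \exp(-2n(e^{-\gamma T} - k/n)_+^2)$ arises when $k/n$ is not small relative to $e^{-\gamma T}$: in that regime the Grönwall iteration is terminated early and replaced by a Sanov/Pinsker-style large deviation estimate for the empirical distribution of $n$ iid draws from $\mu$. The main obstacle I anticipate is the combinatorial bookkeeping in the splitting step that produces the sharp quadratic rate — naively bounding the excess drift by $\gamma E^{(n,k+1)}$ gives only $k/n$, and one needs to exploit the mean-zero structure of $\tfrac{1}{n-1}\sum_{j\neq i}(b(t,X^i,X^j) - \langle \mu, b(t,X^i,\cdot)\rangle)$ under the conditional measure and the exchangeability of $P^{(n)}$ to square-gain the factor of $k/n$.
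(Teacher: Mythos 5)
First, note that the paper does not actually prove this statement: it is quoted as \cite{lacker2022Hierarchies}*{Theorem 2.2} and used as background, so the relevant comparison is with the paper's proof of its generalisation, Theorem \ref{theorem:qPropChaos}. Your overall architecture matches that proof: project onto the $k$-marginal via the mimicking/projection lemma, apply the Girsanov entropy identity, split the drift discrepancy into a within-block empirical error and a conditional-law error, control the latter by the transport inequality \eqref{danTransport} together with the chain rule for relative entropy, then Gr\"onwall and iterate down to level $n$ using the kernels $A_k^l$, $B_k^l$ and the global estimate $\bH^{(n,n)}_T \leq \bH_0^{(n)} + nM/2$ (cf.\ Proposition \ref{proposition:GlobelEnt}).

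There is, however, a genuine gap at precisely the step you flag as ``the main obstacle'': you do not supply the mechanism that upgrades the within-block error from $O(k^3/n^2)$ to $O(k^2/n^2)$, and the hint you give (exploiting a ``mean-zero structure under the conditional measure'') is not how the argument goes, since the particles are correlated under $P^{(n)}$ and the cross terms in $\big|\tfrac{1}{n-1}\sum_{j\neq i}\big(b(t,X^i,X^j)-\langle\mu,b(t,X^i,\cdot)\rangle\big)\big|^2$ do not vanish. The actual argument is a two-pass bootstrap. Pass one: bound the within-block term crudely by Cauchy--Schwarz and \eqref{danSqInt}, giving a per-level error of order $k(k-1)^2TM/(n-1)^2$; the hierarchy then yields $\bH^{(n,k)}_T=O(k^3/n^2)$, and in particular $\bH^{(n,3)}_T\leq \hat{C}/n^2$. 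Pass two: expand the square; by exchangeability the diagonal terms contribute $MTk(k-1)/(n-1)^2$, while each of the $O(k^3)$ off-diagonal terms is rewritten through the conditional law $P^{(n,3|2)}$ and bounded, via Cauchy--Schwarz, \eqref{danTransport} and the chain rule, by $\sqrt{M\gamma(\bH^{(n,3)}_s-\bH^{(n,2)}_s)}\leq\sqrt{M\gamma\hat{C}}/n$, so the off-diagonal contribution is $O(k^3/n^3)=O(k^2/n^2)$; only the re-run hierarchy with this improved input gives the stated rate. Separately, your displayed recursion is missing the $-E^{(n,k)}(s)$ term inside the integral (it should read $\gamma k\int_0^t\big(E^{(n,k+1)}(s)-E^{(n,k)}(s)\big)\,ds$ up to the harmless factor $(\tfrac{n-k}{n-1})^2\leq 1$): without that negative term there is no $e^{-\gamma kt}$ damping, the kernels $A_k^l$, $B_k^l$ do not arise, and both the convergence of the $(n-k)$-fold iteration and the tail term $\exp\big(-2n(e^{-\gamma T}-k/n)_+^2\big)$ are lost.
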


    \subsection{Quantitative propagation of chaos}
    \label{subsec:QPoC}

    We now return our attention to the main result of this paper, a quantitative propagation of chaos result for collections of interacting stochastic differential equations of the form
    \begin{equation}
		\label{eq:danParticleSystem_Z}
		dX_t^{i, n} = \bigg( b_0\big( t, X^{i,n}[t] \big) + \tfrac{1}{n-1}\sum_{\substack{j=1 \\j \neq i}}^n b\big( t, X^{i,n}[t], X^{j,n}[t] \big) \bigg) dt + dZ_t^{i,n}, \quad i = 1, ..., n.
	\end{equation}
	where $(Z^{i, n})$ are exchangeable Gaussian processes each of the form \eqref{eq:GaussVolterra}. As in the Brownian setting, we expect the limiting marginal dynamics as $n\to \infty $ to converge to the McKean-Vlasov SDE 
	\begin{equation}
		\label{eq:danMV_Z}
		dX_t = \bigg( b_0\big( t, X[t] \big) + \Big\langle \mu[t], b\big( t, X[t], \cdot \big) \Big\rangle \bigg) dt + dZ_t \quad \mu[t] = \bP \circ \big( X[t] \big)^{-1}. 
	\end{equation}
    We are proving a slightly more general result that Theorem \ref{qPropChaosfBM} where the driving Gaussian signal was taken to be a fractional Brownian motion. 

    Following on from Definition \ref{definition:well-posed}, we need a specific notion for well-posedness of our solution in order to prove uniform propagation of chaos:
    \begin{definition}
        \label{definition:well-posed_Z}
        Let $K:[0, T] \to L^2\big( [0,T]; \lin(\bR^d, \bR^d) \big)$ be a Volterra kernel that satisfies Assumption \ref{assumption:VolterraK} and let $Z$ be Gaussian process of the form \eqref{eq:GaussVolterra}. 
        
        Let $n\in \bN$ and let $\overline{b}:[0,T] \times \cC_T^n \to \bR^n$ be progressively measurable. We say that the stochastic differential equation
        \begin{equation*}
            dX_t = \overline{b}\big( t, X[t] \big) dt + dW_t
        \end{equation*}
        is \emph{well-posed} if for every $(t_0, z) \in [0, T) \times \cC_T^n$, there exists a unique in law solution to the stochastic differential equation
        \begin{equation*}
            dX_t = \overline{b}\big( t, X[t] \big) dt + dW_t, \quad t\in (t_0, T] 
            \quad \mbox{and}\quad
            X_t = z_t \quad t\in [0,t_0]. 
        \end{equation*}
    \end{definition}

    Our goal is to establish a rate of convergence following the techniques pioneered in \cite{lacker2022Hierarchies}. 
    \begin{assumption}
        \label{assumption:QPoC}
        Suppose that there exists a unique weak solution to Equation \eqref{eq:danParticleSystem_Z} whose law $P^{(n)} \in \cP\big( (\cC_T^d)^{\times n} \big)$ is exchangeable, and suppose that there exists a weak solution to the McKean-Vlasov equation \eqref{eq:danMV_Z} with law $\mu \in \cP(\cC_T^d)$. 

        Further, suppose
        \begin{enumerate}
            \item The Volterra kernel $K:[0,T] \to L^2\big( [0,T]; \lin(\bR^d, \bR^d) \big)$ satisfies Assumption \ref{assumption:VolterraK} and the collection of Gaussian Volterra processes $(Z^i)_{i=1, ..., n}$ defined as in Equation \eqref{eq:GaussVolterra} are exchangeable. 
            \item For every $k\in \bN$, there exists a well defined solution (in the sense of Definition \ref{definition:well-posed_Z}) to the stochastic differential equation
            \begin{equation*}
                d\overline{\rx}_t^{k} = \overline{b}^{\otimes k}\big( t, \overline{\rx}^k[t] \big) dt + d\rZ_t^{k} 
            \end{equation*}
            where $\overline{\rx}_t^k = (\overline{X}_t^{i, k})_{i=1, .., k}$, $\rZ_t^{k} = (Z_t^{i,k})_{i=1, ..., k}$ and
			\begin{align*}
				\overline{b}^{\otimes k} \big( t, \overline{\rx}^k[t] \big) = \bigotimes_{i=1}^k \bigg( b_0\big( t, \overline{X}^{i,k}[t] \big) + \Big\langle \mu[t], b\big( t, \overline{X}^{i, k}[t], \cdot \big) \Big\rangle \bigg).
			\end{align*}    
			\item Square integrability:
			\begin{equation}
				\label{eq:fBmSqInt}
				\sup_{n\in \bN} \int_{(\cC_T^d)^{\times 2}} \bigg( \int_0^T \Big| Q^b\big(s, x, y \big) - \Big\langle \mu, Q^b\big(s, x, \cdot \big) \Big\rangle \Big|^2 ds \bigg) P^{(n, 2)}(dx, dy) = M < \infty.
			\end{equation}
			\item Transport inequality. Let $\mu$ be the law of the solution of \eqref{fBmMV}. Then there exists a $\gamma >0$ such that for all $t \in [0,T]$, $x \in \cC_T^d$, and $\nu \in  \cP(\cC_T^d)$ such that $Q_t^b\big( x, \cdot \big) \in L^1(d\nu)$, we have
			\begin{equation}
				\label{eq:fBmTransportInequality}
				\bigg| \Big\langle \nu[t] - \mu[t], Q^b\big( t, x, \cdot \big) \Big\rangle \bigg|^2 
                \leq 
                \gamma\cdot \bH\big[ \nu[t] \big| \mu[t] \big].
			\end{equation}
			\item There exists $C_0>0$ such that
			\begin{equation}
				\bH\big[ P_0^{(n,k)} \big| \mu_0^{\otimes k} \big] \leq C_0 \frac{k^2}{n^2}.
			\end{equation}
		\end{enumerate}
    \end{assumption}

    We implement our Mimicking Theorem (see Theorem \ref{brownianFilteringThm}) and Entropy estimates (see Proposition \ref{fBmEntropy}) to prove a quantitative propagation of chaos result:
    \begin{theorem}
		\label{theorem:qPropChaos}
	    Let $K:[0, T] \to L^2\big( [0,T]; \lin(\bR^d, \bR^d) \big)$ be a Volterra kernel, and let $b_0: [0,T] \times \cC_T^d \to \bR^d$ and $b:[0,T] \times \cC_T^d \times \cC_T^d \to \bR^d$ be progressively measurable. Suppose that Assumption \ref{assumption:QPoC} is satisfied. 
        
		Then, there exists $\bar{M}$ depending on $\gamma$, $M$, and $C_0$ such that for large enough $n \in \bN$ and any $k \in \{1, \ldots n\}$, we have
		\begin{equation}
			\bH\big[ P^{(n, k)}[t] \big| \mu^{\otimes k}[t] \big] \leq \bar{M}\bigg(\frac{k^2}{n^2} + \exp\bigg( - 2n(e^{-\gamma T} - \frac{k}{n})^2_+ \bigg) \bigg).
		\end{equation}
	\end{theorem}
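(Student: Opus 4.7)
The plan is to reduce the theorem, via the fundamental semimartingale representation, to a setting where the techniques of \cite{lacker2022Hierarchies} can be applied, and then to translate the resulting bounds back through the Volterra bijection. First, for each realisation of the particle system \eqref{eq:danParticleSystem_Z} and for the McKean-Vlasov solution \eqref{eq:danMV_Z}, I would introduce the transformed processes
\begin{align*}
    X_t^{\dagger,i,n} &= X_0^{i,n} + \int_0^t Q^{b_0}\big(s, X^{i,n}[s]\big) ds + \tfrac{1}{n-1}\sum_{j\neq i} \int_0^t Q^{b}\big(s, X^{i,n}[s], X^{j,n}[s]\big) ds + W_t^{i,n}, \\
    X_t^{\dagger} &= X_0 + \int_0^t Q^{b_0}\big(s, X[s]\big) ds + \int_0^t \Big\langle \mu[s], Q^{b}\big(s, X[s], \cdot\big)\Big\rangle ds + W_t,
\end{align*}
via convolution with the Volterra kernel $L$. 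By Proposition \ref{proposition:Filtrations-X,newX}, the filtrations of $X^{i,n}$ and $X^{\dagger,i,n}$ (respectively $X$ and $X^{\dagger}$) coincide, so the measurable bijection $X \leftrightarrow X^{\dagger}$ together with the data processing inequality gives
\begin{equation*}
    \bH\big[ P^{(n,k)}[t] \big| \mu^{\otimes k}[t] \big] = \bH\big[ P^{\dagger,(n,k)}[t] \big| \mu^{\dagger,\otimes k}[t] \big],
\end{equation*}
as in the proof of Proposition \ref{fBmEntropy}. Consequently, it suffices to establish the bound for the transformed (now semimartingale) system.

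Next, I would apply the Mimicking Theorem (Theorem \ref{brownianFilteringThm}) to identify the marginal law of the $k$-subsystem of $X^{\dagger,(n)}$ as the law of a weak solution of a stochastic differential equation driven by Brownian motion whose drift is given by the conditional expectation
\begin{equation*}
    \bar{b}^{n,k}_i\big(t, x^k[t]\big) := Q^{b_0}\big(t, x^{i}[t]\big) + \tfrac{1}{n-1}\sum_{j=1,j\neq i}^k Q^{b}\big(t, x^{i}[t], x^{j}[t]\big) + \tfrac{n-k}{n-1}\, \bE^{P^{(n)}}\!\Big[ Q^b\big(t, X^{i,n}[t], X^{k+1,n}[t]\big) \,\Big|\, X^{(n,k)}[t]=x^k[t] \Big],
\end{equation*}
while the McKean-Vlasov product $\mu^{\dagger,\otimes k}$ solves the analogous SDE with drift given by the product structure. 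This is exactly the configuration in which the entropy estimate of Proposition \ref{fBmEntropy} computes $\bH[P^{\dagger,(n,k)}[t] \mid \mu^{\dagger,\otimes k}[t]]$ as one half of the expected squared $L^2$-norm of the drift difference. Writing out this drift difference produces two contributions: a \emph{fluctuation} term coming from replacing the conditional expectation by the true mean-field average, and a \emph{propagation} term $Q^b(s,x^{i}[s],x^{j}[s]) - \langle \mu[s], Q^b(s, x^{i}[s], \cdot)\rangle$. The square integrability assumption \eqref{eq:fBmSqInt} controls the latter pointwise in $n$, and the transport inequality \eqref{eq:fBmTransportInequality} (applied to the conditional law of $X^{k+1,n}$ given $X^{(n,k)}$) bounds the former by $\gamma \cdot \bH[P^{(n,k+1)}[t] \mid \mu^{\otimes(k+1)}[t]]$, after a Jensen/chain-rule argument exactly as in \cite{lacker2022Hierarchies}*{Sec.~3}.

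Putting this together, setting $H_k(t) := \bH[P^{(n,k)}[t] \mid \mu^{\otimes k}[t]]$, I would obtain a BBGKY-type differential inequality of the form
\begin{equation*}
    \tfrac{d}{dt} H_k(t) \leq C_1 \tfrac{k(k-1)}{(n-1)^2} M + C_2 \gamma \tfrac{k(n-k)}{(n-1)^2} H_{k+1}(t) - C_3 \tfrac{k(n-k)}{(n-1)^2} (\cdots),
\end{equation*}
closed by the initial condition $H_k(0) \leq C_0 k^2/n^2$. Iterating this hierarchy up to $k=n$ (using that $H_n(t)$ is bounded via a direct Girsanov comparison of the full particle system to the $n$-fold product, which gives an $O(1)$ bound), and optimising the resulting telescoping sum against the discrete Gronwall-type inequality as in \cite{lacker2022Hierarchies}, produces the target bound $\bar M ( k^2/n^2 + \exp(-2n(e^{-\gamma T} - k/n)_+^2))$.

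The main obstacle, I expect, will be the careful verification that the mimicking/conditioning step goes through at the level of the \emph{path-dependent} drifts $Q^b$, since these are non-anticipative functionals of the whole history of the path rather than pointwise functions. Concretely, one must check that conditioning the drift $Q^b(s, X^{i,n}[s], X^{j,n}[s])$ on $X^{(n,k)}[s]$ yields a progressively measurable version to which Proposition \ref{fBmEntropy} applies, and that the exchangeability of $P^{(n)}$ survives the Volterra transformation (which is clear since $L$ acts diagonally). Once these measure-theoretic points are settled, the rest is a direct, if careful, adaptation of the Lacker hierarchy.
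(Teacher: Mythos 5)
Your overall strategy is the paper's: convolve with $L$ to pass to the fundamental semimartingale, invoke the filtration equality of Proposition \ref{proposition:Filtrations-X,newX} to identify the relative entropies before and after the transform, apply Theorem \ref{brownianFilteringThm} to write the $k$-marginal as an SDE with the conditional drift $\tfrac{n-k}{n-1}\langle P^{(n,k+1|k)}_{\rx^{(n,k)}}, Q^b(s,X^{i,n},\cdot)\rangle$, compute the entropy via Proposition \ref{fBmEntropy}, and close the hierarchy with the transport inequality, the entropy chain rule, Gr\"onwall, and the iteration estimates of Lemma \ref{Lemma:Lacker-ODE}. All of that matches.

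However, there is a genuine gap in how you obtain the source term $C_1\,\tfrac{k(k-1)}{(n-1)^2}M$ in your differential inequality. The justification you offer — exchangeability plus the square integrability condition \eqref{eq:fBmSqInt} applied to the term $\bE\big[\,\big|\tfrac{1}{n-1}\sum_{j\neq i}^{k}Q^b(s,X^{i,n},X^{j,n})-\tfrac{k-1}{n-1}\langle\mu,Q^b(s,X^{i,n},\cdot)\rangle\big|^2\big]$ — only yields $\tfrac{k(k-1)^2}{(n-1)^2}TM$, since the square of a sum of $k-1$ centred terms is bounded crudely by $(k-1)^2$ times the individual second moment. A single pass through the hierarchy with this source term gives $O(k^3/n^2)$, not the claimed $O(k^2/n^2)$. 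The paper's proof is genuinely two-pass: it first runs the iteration with the crude $k^3/n^2$ source term to obtain $\bH_T^{(n,3)}\leq \hat C/n^2$, and then \emph{returns} to the source term, expands the square, and bounds each of the $O(k^3)$ off-diagonal cross terms by conditioning on $(X^{1,n},X^{2,n})$, applying Cauchy--Schwarz, the transport inequality, and the chain rule to get $\sqrt{M\gamma(\bH_s^{(n,3)}-\bH_s^{(n,2)})}\leq \sqrt{M\gamma\hat C}/n$. Only this bootstrap reduces the source term to $M_*k^2/n^2$ and, after a second Gr\"onwall iteration, delivers the sharp rate. Since the entire point of the theorem is the exponent $2$ on $k/n$, this omission is not cosmetic. (A minor further point: your claimed $O(1)$ bound on $\bH_T^{(n,n)}$ is not what the global estimate of Proposition \ref{proposition:GlobelEnt} gives — it gives $\bH_0^{(n)}+nM/2=O(n)$ — though this is harmless because the factor $A_k^{n-1}(T)\leq \exp(-2n(e^{-\gamma T}-k/n)_+^2)$ absorbs polynomial growth in $n$.)
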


    By convolving the dynamics of Equation \eqref{eq:danParticleSystem_Z} and Equation \eqref{eq:danMV_Z} with the Volterra kernel $L:[0,T] \to \fWIC$ (derived from Assumption \ref{assumption:VolterraK}), we obtain for $i=1, ..., n$ the dynamics
    \begin{align}
        \nonumber
        &(\newX_t)^{i,n} = X_0^i + \int_0^t L(t, s) \cdot \bigg( b_0\big( s, X^{i,n} \big) + \tfrac{1}{n-1} \sum_{\substack{j=1 \\ j\neq i}}^n b\big( s, X^{i,n}, X^{j,n} \big) \bigg) ds + \int_0^t L(t, s) dZ_s^{i,n}
        \\
        \label{eq:qPropChaosfBM-1}
        &= X_0^i + \int_0^t Q^{b_0}\Big(s,  X^{i,n} \Big) ds + \int_0^t \tfrac{1}{n-1} \sum_{\substack{j=1\\j\neq i}}^n Q^{b}\Big(s, X^{i,n},  X^{j,n} \Big) ds + W_t^{i,n}
    \end{align}
    where $Q^{b_0}$ and $Q^b$ are defined as in Equation \eqref{Q}. 
    
    Further, 
    \begin{align}
        \nonumber
        \newX_t =& X_0 + \int_0^t L(t, s) \cdot \bigg( b_0\big( s, X \big) + \Big\langle \mu[s], b\big( s, X, \cdot \big) \Big\rangle \bigg) ds + W_t
        \\
        \label{eq:qPropChaosfBM-2}
        =& X_0 + \int_0^t Q^{b_0}\big( s, X \big) ds + \int_0^t \Big\langle \mu[s], Q^{b}\big( s, X, \cdot \big) \Big\rangle ds + W_t. 
    \end{align}

    We want to establish a global estimate for the Kullback-Lieber diverence between the measures $P^{(n, k)}$ and $\mu^{\otimes k}$. In order to compress notation, we denote
    \begin{equation}
        \label{eq:Ent-Definition}
        \bH_t^{(n)}:= \bH\big[ P^{(n,n)}[t] \big| \mu^{\otimes n}[t] \big]
        \quad \mbox{and}\quad 
        \bH_t^{(n, k)}:= \bH\big[ P^{(n,k)}[t] \big| \mu^{\otimes k}[t] \big]. 
    \end{equation}

    \begin{proposition}
        \label{proposition:GlobelEnt}
        Under Assumption \ref{assumption:QPoC} the relative entropy between the law of the $n$-interacting system of equations \eqref{eq:danParticleSystem_Z} and the $n$-tensor of the law of the McKean-Vlasov equation \eqref{eq:danMV_Z} satisfies the global estimate
        \begin{equation}
            \label{eq:proposition:GlobelEnt}
			\bH_T^{(n)} \leq \bH_0^{(n)} + \frac{n M}{2}
		\end{equation}
        where $M$ is the constant from \eqref{eq:fBmSqInt}. 
    \end{proposition}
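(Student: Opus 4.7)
The plan is to apply the entropy equality of Proposition \ref{fBmEntropy} to the pair $(P^{(n)}, \mu^{\otimes n})$, viewed as laws on the product Wiener space $(\cC_T^d)^{\times n}$ driven by $n$ independent copies of the Volterra Gaussian process $Z$. Under $P^{(n)}$ the drift of the $i$-th component is
\begin{equation*}
    b^1_i(t, \rx) = b_0\bigl(t, X^{i}[t]\bigr) + \tfrac{1}{n-1}\sum_{j\neq i} b\bigl(t, X^{i}[t], X^{j}[t]\bigr),
\end{equation*}
while under $\mu^{\otimes n}$ it is $b^2_i(t, \rx) = b_0(t, X^{i}[t]) + \langle \mu[t], b(t, X^{i}[t], \cdot)\rangle$. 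The $b_0$ contribution cancels in the difference, so the square integrability assumption \eqref{eq:fBmSqInt} will supply the finite-second-moment condition \eqref{eq:fBmEntropy-1} needed to invoke Proposition \ref{fBmEntropy}.

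Because the $nd$-dimensional driving noise is the product of $n$ independent copies of $Z$, its Cameron--Martin space splits as the orthogonal sum of the individual RKHSes and therefore
\begin{equation*}
    \left\|\int_0^\cdot (b^1_s - b^2_s)\,ds \right\|_{\RKHS_T^{(n)}}^2 = \sum_{i=1}^n \int_0^T \bigl|Q^{b^1_i - b^2_i}(s)\bigr|^2\,ds
\end{equation*}
where the last equality uses \eqref{eq':lemma:Existence_Q}. Linearity of the transform $b\mapsto Q^b$ (evident from \eqref{Q}) then yields
\begin{equation*}
    Q^{b^1_i-b^2_i}(s) = \tfrac{1}{n-1}\sum_{j\neq i} Q^b\bigl(s, X^i, X^j\bigr) - \bigl\langle \mu, Q^b(s, X^i, \cdot)\bigr\rangle.
\end{equation*}

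For the final step, I would apply the elementary Cauchy--Schwarz/Jensen bound
\begin{equation*}
    \bigl|Q^{b^1_i - b^2_i}(s)\bigr|^2 \leq \tfrac{1}{n-1}\sum_{j\neq i} \Bigl|Q^b\bigl(s, X^i, X^j\bigr) - \langle \mu, Q^b(s, X^i, \cdot)\rangle\Bigr|^2,
\end{equation*}
sum in $i$, integrate in $s$, and take expectation under $P^{(n)}$. Exchangeability of $P^{(n)}$ and the hypothesis \eqref{eq:fBmSqInt} bound each of the $n(n-1)$ pair contributions by $M$, giving a grand total of at most $nM$; substituting into Proposition \ref{fBmEntropy} produces \eqref{eq:proposition:GlobelEnt}. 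The main subtlety to verify is the orthogonal decomposition of the product RKHS and the corresponding componentwise form of $Q^{b^1 - b^2}$, which both follow from the tensor-product structure of the driving noise in \eqref{eq:danParticleSystem_Z} and of the reference measure $\mu^{\otimes n}$; once these are in hand the Cauchy--Schwarz step is sharp in $n$, ensuring the estimate grows linearly rather than quadratically.
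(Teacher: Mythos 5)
Your proposal is correct and follows essentially the same route as the paper: apply Proposition \ref{fBmEntropy} to the $n$-particle system, use the componentwise form of $Q^{b^1-b^2}$ (with Lemma \ref{lem:qprop:fubini} justifying $Q^{\langle\mu,b\rangle}=\langle\mu,Q^b\rangle$), then Jensen's inequality together with exchangeability and the square-integrability hypothesis \eqref{eq:fBmSqInt} to obtain the bound $nM/2$. The only difference is cosmetic ordering of the exchangeability and Jensen steps, and your explicit remark about the orthogonal splitting of the product Cameron--Martin space, which the paper leaves implicit.
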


    \begin{proof}
        By applying Proposition \ref{fBmEntropy} followed by using the exchangeability of the system, we get
		\begin{align*}
			\bH_T^{(n)} &= \bH_0^{(n)} + \frac{1}{2} \bE \bigg[ \sum_{i = 1}^n \int_0^T \Big| \tfrac{1}{n-1} \sum_{\substack{j=1 \\ j\neq i}}^n Q^{b} \big(s, X^{i, n}, X^{j, n} \big) - \big\langle \mu, Q^b(s, X^{i, n}, \cdot) \big\rangle  \Big|^2 ds \bigg]
            \\
			&= \bH_0^{(n)} + \frac{n}{2} \bE\bigg[ \int_0^T \Big| \tfrac{1}{n-1} \sum_{\substack{j=1 \\ j\neq i}}^n Q^{b} \big(s, X^{1, n}, X^{j, n} \big) - \big\langle \mu, Q^b(s, X^{1, n}, \cdot) \big\rangle \Big|^2 ds \bigg] 
            \\
			&\leq \bH_0^{(n)} + \frac{n}{2} \int_0^T \bE\bigg[ \Big| Q^{b} \big(s, X^{1, n}, X^{n, n} \big) - \big\langle \mu, Q^b(s,X^{1, n}, \cdot) \big\rangle \Big|^2 \bigg] ds. 
		\end{align*}
    Finally, apply \eqref{eq:fBmSqInt} to conclude with Equation \eqref{eq:proposition:GlobelEnt}. 
    \end{proof}
    
    \subsubsection*{Volterra transformations of measure dependent functions}

    This first Lemma allows us to change the order of integration between between convolution with the Volterra kernel and integration by some measure on pathspace:
    \begin{lemma}
    	\label{lem:qprop:fubini}
        Let $\mu \in \cP_2( \cC_T^d)$. Let $K:[0,T] \to L^2\big( [0,T]; \lin(\bR^d, \bR^d) \big)$ be a Volterra kernel that satisfies \ref{assumption:VolterraK}, let $\RKHS_T$ be the associated reproducing kernel Hilbert space and let $b: [0, T] \times \cC_T^d  \to \bR^d$ be a progressively measurable process such that
        \begin{equation*}
        \bigg\| \int_0^\cdot b_s ds \bigg\|_{\RKHS_T} < \infty \quad \mu\mbox{-almost surely. }
        \end{equation*}
        Then for every $t \in (0, T]$, we have
        \begin{equation*}
            Q^{\langle \mu, b\rangle}(t, X) = \frac{d}{dt} \int_0^t L(t, s) \langle \mu, b(s,\cdot) \rangle ds = \Big\langle \mu, \frac{d}{dt} \int_0^t L(t, s) b(s, \cdot) ds \Big\rangle.
        \end{equation*}
    \end{lemma}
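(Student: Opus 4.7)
The plan is to reduce the claimed identity to two applications of Fubini's theorem together with the fundamental characterisation of $Q^b$ recorded in \eqref{eq':lemma:Existence_Q}. Note first that $\langle \mu, b(s, \cdot)\rangle$ does not depend on $X$, so the identity is really a claim about two deterministic functions of $t$ (with a silent dependence on $\mu$) agreeing almost everywhere; the first equality in the display is just the definition \eqref{Q} of $Q^{\langle\mu,b\rangle}$, so only the second equality requires work.

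The first step is to rewrite the inner Volterra integral as a time integral of $Q^b$. Under the hypothesis $\big\|\int_0^{\cdot} b_s\,ds\big\|_{\RKHS_T} < \infty$ $\mu$-a.s., Equation \eqref{eq':lemma:Existence_Q} applied pathwise gives, for $\mu$-a.e.\ $X \in \cC_T^d$,
\begin{equation*}
\int_0^t L(t,s)\, b(s, X)\,ds \;=\; \int_0^t Q^b\big(u, X\big)\,du, \qquad t \in [0,T],
\end{equation*}
with $Q^b(\cdot, X) \in L^2([0,T]; \bR^d)$. The second step is to apply Fubini's theorem twice: first on $(s, X) \mapsto L(t,s) b(s, X)$ over $[0,t] \times \cC_T^d$ under $ds \otimes \mu$ to obtain
\begin{equation*}
\int_0^t L(t,s)\,\langle \mu, b(s, \cdot)\rangle\,ds \;=\; \Big\langle \mu, \int_0^t L(t,s)\, b(s, \cdot)\,ds \Big\rangle,
\end{equation*}
and then on $(u, X) \mapsto Q^b(u, X)$ over $[0,t] \times \cC_T^d$ to obtain
\begin{equation*}
\Big\langle \mu, \int_0^t Q^b(u, \cdot)\,du\Big\rangle \;=\; \int_0^t \big\langle \mu, Q^b(u, \cdot)\big\rangle\,du.
\end{equation*}
Chaining these identities through Step~1 shows that $t \mapsto \int_0^t L(t,s)\,\langle\mu, b(s,\cdot)\rangle\,ds$ coincides with $t \mapsto \int_0^t \langle\mu, Q^b(u,\cdot)\rangle\,du$ on $[0,T]$. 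The final step is Lebesgue differentiation: the right-hand primitive is the primitive of a locally integrable function, so its derivative at a.e.\ $t$ is precisely $\langle \mu, Q^b(t, \cdot)\rangle$, which matches $Q^{\langle \mu, b\rangle}(t, \cdot)$ by the definition \eqref{Q} of $Q^{\langle \mu, b\rangle}$.

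The main obstacle is justifying Fubini, since the standing hypothesis provides only a pathwise bound $\int_0^T |Q^b(s, X)|^2\,ds < \infty$ $\mu$-a.s. rather than joint $(s, X)$-integrability of $L(t, s) b(s, X)$. The strategy to handle this is to argue by truncation: replace $b$ by $b^{(N)}(s, X) := b(s, X)\,\mathbf{1}_{A_N}(X)$ where $A_N := \big\{X : \int_0^T |Q^b(s, X)|^2 ds \leq N\big\}$, for which Cauchy--Schwarz gives $\int_{A_N}\int_0^t |L(t,s) b(s,X)|\,ds\,\mu(dX) \leq \mu(A_N) \cdot t^{1/2} N^{1/2}\, \|L(t, \cdot)\|_{L^2}$, so that Fubini applies to each $b^{(N)}$; one then uses $A_N \uparrow \cC_T^d$ ($\mu$-a.s.) and dominated convergence in conjunction with the Fubini identity for $Q^{b^{(N)}}$ (which coincides with $Q^b \mathbf{1}_{A_N}$) to pass to the limit. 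This is technical but entirely routine, since all bounds inherit from the a.s.\ bound on $\|Q^b(\cdot, X)\|_{L^2}$ via Cauchy--Schwarz.
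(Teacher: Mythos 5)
Your overall route is the same as the paper's: the substance of the lemma is an exchange of the Volterra convolution with integration against $\mu$, justified by Fubini, followed by an identification of derivatives. The only structural difference is in the last step: the paper performs the differentiation weakly, pairing against a test function $\phi \in C_0^\infty$ and integrating by parts twice, whereas you work directly with the primitives via \eqref{eq':lemma:Existence_Q} and invoke Lebesgue differentiation at the end. Both are valid ways to finish once the interchange is established.

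The problem is in your justification of the \emph{first} Fubini. The set $A_N = \{X : \int_0^T |Q^b(s,X)|^2\,ds \le N\}$ controls the $L^2$-norm of $Q^b(\cdot,X)$, and by Cauchy--Schwarz this gives $\int_0^t |Q^b(u,X)|\,du \le t^{1/2}N^{1/2}$ on $A_N$ --- exactly what your second Fubini needs. But the inequality you claim for the first Fubini, namely $\int_0^t |L(t,s)\,b(s,X)|\,ds \le t^{1/2}N^{1/2}\|L(t,\cdot)\|_{L^2}$ on $A_N$, does not follow: a bound on $\|Q^b(\cdot,X)\|_{L^2}$ bounds the integral $\int_0^t L(t,s)b(s,X)\,ds$ without the absolute value inside (since it equals $\int_0^t Q^b(u,X)\,du$), but says nothing about $\int_0^t |L(t,s)b(s,X)|\,ds$, which is what Tonelli requires; your bound would be correct if $A_N$ were cut out by the condition $\int_0^T |b(s,X)|^2\,ds \le N$, but that is a different and generally incomparable condition (you have conflated $b$ with $Q^b$). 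The fix is routine --- truncate additionally on $\{X : \int_0^t |L(t,s)b(s,X)|\,ds \le N\}$, which still exhausts $\cC_T^d$ up to a $\mu$-null set because the pathwise absolute integrability of $s \mapsto L(t,s)b(s,X)$ is implicit in Definition \ref{definition:Q} --- so this is a localized slip rather than a failure of the method. For what it is worth, the paper's own proof simply asserts the joint $d\mu \times ds$-integrability of $L(t,s)b_s$ from ``the assumptions'' without further argument, so neither write-up fully closes this point.
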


    \begin{proof}
    By Assumption \ref{assumption:VolterraK}, we have $L(t, \cdot) \in \fWIC$ and is therefore continuous Therefore, we have
        \begin{equation*}
            \int_0^t L(t, s) b_s ds \in L^1(\Omega \times 
            [0, T]; d\mu \times dt),
        \end{equation*}
        and
        \begin{equation*}
            L(t, s) b_s \in L^1(\Omega \times 
            [0, T]; d\mu \times ds).
        \end{equation*}        
        Let $\phi \in C_0^\infty((0, T); \mathbb{R}^d)$ be a smooth compactly supported function vanishing at $0$ and $t$. Then, by \eqref{eq:definition:Fund-SemiMart} and the Fubini-Tonelli theorem, we have \begin{equation*}
            \int_0^T 
            \big\langle \mu, Q_t^b \big\rangle \cdot \phi_t dt = \bigg \langle \mu, \int_0^T Q^b_t \cdot \phi_t dt \bigg \rangle = \bigg \langle \mu, \int_0^T \frac{d}{dt} \bigg(\int_0^t L(t, s) b_s ds\bigg) \cdot \phi_t dt \bigg \rangle.
        \end{equation*}
        Integrating by parts, we have
        \begin{equation*}
            \bigg \langle \mu, \int_0^T \frac{d}{dt} \bigg(\int_0^t L(t, s) b_s ds\bigg) \cdot \phi_t dt \bigg \rangle =  - \bigg \langle \mu, \int_0^T  \bigg(\int_0^t L(t, s) b_s ds\bigg) \cdot \phi'_t dt \bigg \rangle.
        \end{equation*}
        By the assumptions we have
        \begin{equation*}
            - \bigg \langle \mu, \int_0^T  \bigg(\int_0^t L(t, s) b_s ds\bigg) \cdot \phi'_t dt \bigg \rangle = - \int_0^T \bigg( \int_0^t L(t, s) \langle \mu, b_s \rangle ds \bigg) \cdot \phi'_t dt.
        \end{equation*}
        Integrating by parts again we have \begin{equation*}
            - \int_0^T \bigg( \int_0^t L(t, s) \langle \mu, b_s \rangle ds \bigg) \cdot \phi'_t dt = \int_0^T Q^{\langle\mu, b \rangle }_t \phi_t dt,
        \end{equation*}
        and the claim follows.
    \end{proof}
    
	\begin{proof}[Proof of Theorem \ref{theorem:qPropChaos}]
        Thanks to Theorem \ref{brownianFilteringThm} applied to Equation \eqref{eq:qPropChaosfBM-1}, we can find an extended probability space
        \begin{equation*}
            \big( \hat{\Omega}, \hat{\cF}, (\hat{\cF_t})_{t\in [0,T]}, \hat{\bP} \big)
            \quad \mbox{of}\quad
            \big( \Omega, \cF, \bF, \bP \big)
        \end{equation*}
        carrying a family of Gaussian processes $(W^{i,n})_{i=1, ..., k}$ such that $\rx_t^{\dagger,(n,k)} = (X_t^{\dagger,i,n})_{i=1, ..., k}$
        \begin{align*}
			(\newX_t)^{i, n} =& X^{i, n}_0 + \int_0^t Q^{b_0}\Big(s, X^{i,n} \Big) ds + \int_0^t \tfrac{1}{n-1} \sum_{\substack{j=1 \\ j\neq i}}^k Q^b\Big(s, X^{i,n}, X^{j,n} \Big) ds 
            \\
            &+ \int_0^t \tfrac{1}{n-1} \sum_{j=k+1}^n \bE\bigg[ Q^b\Big(s, X^{i,n}, X^{j,n} \Big) \bigg| \rx^{(n,k)} \bigg] ds 
            + W^{i, n}_t, \quad i = 1, ..., k. 
		\end{align*}
        Notice that for $j=k+1, ..., n$
        \begin{align*}
            \bE\bigg[ Q^b\Big(s, X^{i,n}, X^{j,n} \Big) \bigg| \rx^{(n,k)}[s] \bigg]
            =
            \Big\langle P^{(n,k+1|k)}_{\rx^{(n,k)}[s]}, Q^b\Big( s, X^{i}, \cdot \Big) \Big\rangle
        \end{align*}
        so that for $i \leq k$, 
		\begin{align*}
			X_t^{\dagger,i,n} =& X_0^i + \int_0^t Q^{b_0}\Big( s, X^{i,n} \Big) ds
            + \int_0^t \tfrac{1}{n-1} \sum_{\substack{j=1 \\ j\neq i }}^k Q^b_s\Big( X^{i,n}, X^{j,n} \Big) ds 
			\\
			&+ \tfrac{n - k}{n-1} \int_0^t \Big\langle P^{(n,k+1|k)}_{\rx^{(n,k)}[s]}, Q^b\Big(s, X^{i,n}, \cdot \big) \Big\rangle ds + W_t^{i,n}.
		\end{align*}
          
		By applying Proposition \ref{fBmEntropy} we get that,
		\begin{align}
            \nonumber
			\bH_t^{(n,k)} =& \bH\Big[ P_0^{(n,k)} \Big| \mu_0^{(n, k)} \Big] 
            \\
            \label{eq:qPropChaosfBM-4}
			&+ \frac{1}{2} \sum_{i = 1}^k \int_0^t
			\bE\Bigg[ \bigg| 
            \tfrac{1}{n-1} \sum_{\substack{j=1 \\ j\neq i}}^k Q^{b}\Big(s, X^{i,n}, X^{j,n} \Big) 
            - \tfrac{k-1}{n-1} \cdot 
            \Big\langle \mu[s], Q^{b}\Big(s, X^{i,n}, \cdot \Big) \Big\rangle \bigg|^2 \Bigg] ds
            \\
            \label{eq:qPropChaosfBM-5}
            &+ \frac{1}{2} \sum_{i=1}^k \int_0^t \Big( \tfrac{n-k}{n-1}\Big)^2\cdot \bE\Bigg[ \bigg| \Big\langle P^{(n,k+1|k)}_{\rx^{(n,k)}[s]} - \mu[s], Q^b\big(s, X^{i,n}, \cdot \big) \Big\rangle
            \bigg|^2 \Bigg] ds. 
		\end{align}
  
        Firstly, by the exchangeability of the collection of random variables $(X^{j,n}[t])_{j=1, ..., k}$ and the square integrability condition \eqref{eq:fBmSqInt},
		\begin{align}
            \nonumber
            \eqref{eq:qPropChaosfBM-4} 
            =& \tfrac{k(k-1)^2}{2(n-1)^2} \cdot \int_0^t \bE \Bigg[ \bigg| Q^b\Big(s, X^{1,n}, X^{n,n} \Big) 
            - 
            \Big\langle \mu[s], Q^b\big(s, X^{1,n}, \cdot \big) \Big\rangle \bigg|^2 \Bigg] ds
            \\
            \label{I1Estimate}
            \leq& \tfrac{k(k-1)^2}{(n-1)^2} \cdot TM. 
		\end{align}
		
        On the other hand,
		\begin{align*}
			\eqref{eq:qPropChaosfBM-5} = \tfrac{k(n-k)^2}{2(n-1)^2} \cdot \int_0^t \bE \Bigg[ \bigg| \Big\langle P^{(n,k + 1| k)}_{\rx^{(n,k)}[s]} - \mu[s], Q^b\big( s, X^{1,k}, \cdot \big) \Big\rangle \bigg|^2 \Bigg] ds.
		\end{align*}
		Apply the transport inequality \eqref{eq:fBmTransportInequality}, we get
		\begin{align*}
			\bE \Bigg[ \bigg| \Big\langle P^{(n,k + 1| k)}_{\rx^{(k,n)}[s]} - \mu[s], Q^b\big( s, X^{1,k}, \cdot \big) \Big\rangle \bigg|^2 \Bigg] 
            \leq 
            \gamma\cdot \bE \Big[ \bH\big[ P^{(n,k + 1| k)}_{\rx^{(k,n)}[s]} \big| \mu[s] \big] \Big]
		\end{align*}
		Applying the chain rule for relative entropy 
        \begin{equation*}
            \bE\Big[ \bH\big[ P^{(n,k + 1| k)}_{\rx^{(n,k)}[s]} \big| \mu[s] \big] \Big] 
            = 
            \bH\Big[ P_s^{(n,k + 1)} \Big| \mu[s] \Big] - \bH\Big[ P_s^{(n,k)} \Big| \mu[s] \Big]
        \end{equation*}
        yields that
		\begin{equation}
            \label{diffInequality}
			\bH_t^{(n,k)} \leq \bH\Big[ P_0^{(n,k)}\Big| \mu_0^{(k)} \Big] + \tfrac{k(k-1)^2}{(n-1)^2}\cdot TM + k \gamma \int_0^t \Big( \bH_s^{(n,k+1)} - \bH_s^{(n,k)} \Big) ds.
		\end{equation}
        We apply Gronwall's inequality to  \eqref{diffInequality} to get
        \begin{equation}
            \label{intInequality}
            \bH_t^{(n,k)} \leq e^{ - \gamma kt} \cdot \bH_0^{(n,k)} + \int_0^t e^{- \gamma k\cdot (t - s)} \cdot \bigg( \tfrac{k(k-1)^2}{(n - 1)^2}TM + (\gamma k) \cdot \bH_s^{(n,k+1)} \bigg) ds.
        \end{equation}
        Iterate \eqref{intInequality} $(n - k)$ times to obtain
        \begin{equation}
            \bH_T^{(n,k)} \leq \sum_{l = k}^{n-1} \bigg[ B_k^l(T) \bH_0^{(n,l)} + \tfrac{(l - 1)^2 TM}{\gamma(n-1)^2} \cdot A_k^l(T) \bigg] + A_k^{n - 1}(T) \bH_T^{n,n},
        \end{equation}
        where (following \cite{lacker2022Hierarchies}*{Eq (4.19)}) we define
        \begin{equation}
            \label{eq:Lacker-functions}
            \begin{aligned}
                A_k^l(t_k) :=& \Big( \prod_{j=k}^l (\gamma j) \Big) \cdot \int_0^{t_k} \int_0^{t_{k-1}} ... \int_0^{t_l} \exp\bigg( -\sum_{j=k}^l \gamma_j (t_j - t_{j+1}) \bigg) dt_{l+1} ... dt_{k+2} dt_{k+1}
                \\
                B_k^l(t_k) :=& \Big( \prod_{j=k}^{l-1} (\gamma j) \Big) \cdot \int_0^{t_k} \int_{0}^{t_{k+1}} ... \int_0^{t_{l-1}} \exp\bigg( -\gamma l t_l - \sum_{j=k}^{l-1} \gamma j\big( t_j - t_{j+1} \big) \bigg) dt_l ... dt_{k+2} dt_{k+1}
            \end{aligned}
        \end{equation}
        for $t_k>0$ and $n\geq l \geq k \geq 1$, with $B_k^k(t):=e^{-\gamma k t}$. 
        By Lemma \ref{Lemma:Lacker-ODE}, we can estimate $A_k^l$ and $B_k^l$ to get the inequality
        \begin{equation}
            \bH_T^{(n,k)} \leq \frac{e^{3\gamma T}}{n^2} \cdot \Big( 2C_0k^2 + 5M T k^3 \Big) + \Big( C_0 + MnT \Big) \cdot \exp\bigg( -2n \Big( e^{-\gamma T} - \tfrac{k}{n} \Big)_+^2 \bigg). 
        \end{equation}
        This is $O(k^3/n^2)$, so we improved the estimate by exploiting exchangeability. Notice that we can find a constant $\hat{C}$ such that
        \begin{equation}
            \label{H3Estimate}
            \bH^{(n,3)}_T \leq \frac{\hat{C}}{n^2}.
        \end{equation}
        
        Next, we can improve the estimate \eqref{I1Estimate} by expanding the square. By exchangeability, 
        \begin{align*}
            \eqref{eq:qPropChaosfBM-4} =& \tfrac{k(k-1)}{(n - 1)^2} \cdot \int_0^t \bE\Bigg[ \bigg| \sum_{j = 2}^k Q^b\big(s, X^{1,n}, X^{j,n} \big) - \Big\langle \mu, Q^b\big(s, X^{1,n}, \cdot \big) \Big\rangle \bigg|^2 \Bigg] ds
            \\
            =& \tfrac{k(k-1)(k-2)}{(n - 1)^2} \bE\Bigg[ \int_0^t \bigg( Q^b \big(s, X^{1,n}, X^{2,n} \big) - \Big\langle \mu, Q^b\big(s, X^{1,n}, \cdot \big) \Big\rangle \bigg)
            \\
            &\hspace{60pt} \cdot \bigg( Q^b\big(s, X^{1,n}, X^{3,n} \big) - \Big\langle \mu[s], Q^b\big(s, X^{1,n}, \cdot \big) \Big\rangle \bigg) ds \Bigg] 
            + MT \frac{k(k-1)}{(n-1)^2}.
        \end{align*}
        
        Using the transport inequality from Equation \eqref{eq:fBmTransportInequality}, we get that for any $t\in [0,T]$
        \begin{align*}
            \bE\Bigg[& \bigg( Q^b\big(s, X^{1,n}, X^{2,n} \big) - \Big\langle \mu[s], Q^b\big(s, X^{1,n}, \cdot \big) \Big\rangle \bigg) 
            \cdot\bigg( Q^b\big(s, X^{1,n}, X^{3,n} \big) - \Big\langle \mu[s], Q^b\big(s, X^{1,n}, \cdot \big) \Big\rangle \bigg) \Bigg],
            \\
            =& \bE\Bigg[ \bigg( Q^b\big(s, X^{1,n}, X^{2,n} \big) - \Big\langle \mu[s], Q^b\big(s, X^{1,n}, \cdot \big) \Big\rangle \bigg)
            \cdot \bigg( \Big\langle P^{(n,3|2)}_{(X^{1,n}, X^{2,n})[s]} - \mu[s], Q^b\big(s, X^{1,n}[s], \cdot \big) \Big\rangle \bigg) \Bigg], 
            \\
            \leq& M^{\frac{1}{2}} \cdot \bE\Bigg[ \bigg| \Big\langle P^{(n,3|2)}_{(X^{1,n}, X^{2,n})[s]} - \mu[s], Q^b\big(s, X^{1,n}, \cdot \big) \Big\rangle \bigg|^2 \Bigg]^{\frac{1}{2}}
            \\
            \leq& M^{\frac{1}{2}} \cdot \gamma^{\frac{1}{2}} \cdot \bE\bigg[ \bH\Big[ P^{(n,3|2)}_{(X^{1,n}, X^{2,n})[s]} \Big| \mu[s] \Big] \bigg]^{\frac{1}{2}} 
            \leq 
            M^{\frac{1}{2}} \cdot \gamma^{\frac{1}{2}} \cdot \Big( \bH_s^{(n,3)} - \bH_s^{(n,2)} \Big)^{\frac{1}{2}} 
            \leq \frac{\sqrt{M\gamma \hat{C}}}{n},
        \end{align*}
        where the last inequality is due to \eqref{H3Estimate}. Therefore, for some constant $M^* \in (0, \infty)$, 
        \begin{align*}
            \eqref{eq:qPropChaosfBM-4} \leq MT\frac{k(k-1)}{(n-1)^2} + \sqrt{M\gamma \hat{C}}\frac{k(k-1)(k-2)}{n(n-1)^2} \leq M_* \cdot \frac{k^2}{n^2}.
        \end{align*}
        Thus we improve the original differential inequality \eqref{diffInequality} to 
        \begin{equation}
            \label{diffInequality2}
            \bH_t^{(n,k)} \leq \bH_0^{(n, k)} + \frac{k^2}{n^2}M_* + k \int_0^t \Big( \bH_s^{(n, k+1)} - \bH_s^{(n,k)} \Big) ds.
        \end{equation}
        Just as before, we apply Gr\"onwall's inequality to get
        \begin{equation}
            \bH_t^{(n, k)} \leq e^{-\gamma k t} \bH_0^{(n, k)} + \int_0^t e^{- \gamma k (t - s)}\bigg( \frac{k^2}{n^2}M_* + \gamma k \bH_s^{(n, k+1)}\bigg) ds.
            \label{eq:betterGronwall}
        \end{equation}
        Iterating \eqref{eq:betterGronwall} $(n - k)$-times yields
        \begin{equation}
            \bH_T^{(n, k)} \leq \sum_{l = k}^{n - 1} \bigg[ B_k^l(T) \frac{C_0 l^2}{n^2} + M_* \frac{l}{\gamma n^2}A_k^l(T)\bigg] + A_k^{(n - 1)}(T) \bH^{(n, n)}_T
        \end{equation}
        Applying our global entropy estimate \eqref{eq:proposition:GlobelEnt} and Lemma \ref{Lemma:Lacker-ODE}, we find a constant $\bar{M} \in (0, \infty)$ such that
        \begin{align*}
            \bH_T^{(n,k)} \leq \bar{M} \bigg( \frac{k^2}{n^2} + \exp\bigg( - 2n\bigg( e^{- \gamma T} - \frac{k}{n}\bigg)_+^2 \bigg).
        \end{align*}
	\end{proof}

    \begin{remark}
        We emphasise that Theorem \ref{qPropChaosfBM} holds as a Corollary of Theorem \ref{theorem:qPropChaos} since it was proved in \cite{Hu2023Locally} that a fractional Brownian motion is a Gaussian Volterra process that satisfies Assumption \ref{assumption:VolterraK}. 
    \end{remark}
	
    \subsection{Examples}
    \label{subsection:QPoCExample}
    In this final section, we include some examples to demonstrate the power of Theorem \ref{theorem:qPropChaos}. 
    
    \subsubsection*{Lower bounds on rate of convergence}
    Next, we compute the explicit rate of convergence for a couple of examples to illustrate these results:
    \begin{example}[Fractional Ornstein-Uhlenbeck process]
        \label{example:fOU-prcess}
        Let $H > \frac{1}{2}$. For each $n\in \bN$, let $(Z^i)_{i=1, ..., n}$ be a collection of $n$-independent fractional Brownian motions, each with Hurst parameter $H$. Fix $a, b \in \mathbb{R}$ such that $-a \neq b$. Consider the system of $n$ interacting equations 
        \begin{equation}
            \label{eq:example:fOU-prcess}
            dX_t^{i,n} = -\bigg(aX_t^{i,n} + \tfrac{b}{n} \sum_{j=1 }^n X_t^{j,n} \bigg) dt + dZ_t^{i,n}, 
            \quad
            X^{i,n}_0 = 0
            \quad i=1, ...,n.
        \end{equation}
        Notice that we have modified the equation slightly from \eqref{eq:danParticleSystem_Z}. The difference is negligible and clarifies the exposition. Equation \eqref{eq:example:fOU-prcess} describes the dynamics of a fractional Ornstein-Uhlenbeck process whose solution can be expressed in vector form $\rx^n = (X^{i,n})_{i=1, ..., n}$ and $\rZ^n = (Z^{i, n})_{i=1, ..., n}$
        \begin{equation*}
            d\rx_t^n = - A_n \cdot \rx_t^n dt + d\rZ_t^n.
        \end{equation*}
        where
        \begin{equation*}
            A_n = a I_n + \tfrac{b}{n}J_n.
        \end{equation*} 
        Let $P^{(n, k)} \in \cP \big( (\cC_T^d)^{\times k} \big)$ denote the law of the collection of stochastic processes $\rx^{(n, k)} = (X^{i, n})_{i=1, ..., k}$ where each $X^{i, n}$ is determined by Equation \eqref{eq:example:fOU-prcess} and $\mu$ denote the law of the McKean-Vlasov limit. The goal of this example is to show that for any $t \in (0,T]$
        \begin{align*}
            \liminf_{n \rightarrow \infty} \frac{n^2}{k^2}\bW^{(2)}\big[ P^{(n, k)}_t, \mu^{\otimes k}_t \big]^2 > 0.
        \end{align*}
        The SDE \eqref{eq:example:fOU-prcess} has explicit solution
        \begin{align*}
            \rx^n_t = \int_0^t e^{-(t-s)A_n} d\rZ^n_s.
        \end{align*}
        It is well known (see for example \cite{cheridito2003fou}), that the covariance of $P^{(n, k)}_t$ is given by  \begin{equation*}
            \Sigma_t^n := \mathbb{E}[\rx^n_t \otimes \rx^n_t] = C_H\int_0^t \int_0^t e^{-(t-u)A_n}e^{-(t-v)A_n}|u - v|^{2H-2} dudv.
        \end{equation*}
        By the change of variable $u \mapsto t - u$ and $v \mapsto t - v$, we have 
        \begin{equation*}
            \Sigma_t^n = C_H \int_0^t \int_0^t e^{-A_n(u + v)} |u - v|^{2H-2} dudv.
        \end{equation*}
        The matrix exponential of the Toeplitz matrix $A_n$ is
        \begin{equation*}
                e^{rA_n} = e^{ra} \bigg( I_n + \tfrac{1}{n}\Big( e^{rb} - 1 \Big) J_n \bigg).
        \end{equation*}
        Define the functions \begin{align*}            \xi(t) &= C_H \int_0^t \int_0^t e^{-a(u + v)}|u - v|^{2H-2} du dv, \\
        \eta(t) &= C_H \int_0^t \int_0^t e^{-(a+b)(u+v)}|u - v|^{2H-2}.
        \end{align*}
        We can then express the covariance $\Sigma_t^n$ as \begin{equation*}
            \Sigma_t^n = \xi(t) I_n + \frac{1}{n}(\eta(t) - \xi(t)) J_n.
        \end{equation*}
        Using classical propagation of chaos, we see that the covariance of $\mu_t^{\otimes k}$ is given by $\bar{\Sigma}_t = \xi(t) I_k. $
        To compute the Wasserstein distance, we use (3.2) of \cite{lacker2022Hierarchies} to see
        \begin{align*}
            \frac{n^2}{k^2}\bW^{(2)}\big[ P^{(n, k)}_t, \mu^{\otimes k}_t \big]^2 &= \frac{n^2}{k^2}\Bigg[\bigg(\xi(t) + \frac{k}{n} \big(\eta(t) - \xi(t)\bigg)^{\frac{1}{2}} - \xi(t)^{\frac{1}{2}} \Bigg] ^2 \\
            &=  \frac{n^2}{k^2}\xi(t) \Bigg[\bigg(1 + \frac{k}{n} \frac{\eta(t) - \xi(t))}{\xi(t)}\bigg)^{\frac{1}{2}} - 1 \Big] ^2.
        \end{align*}
        For any $t > 0$, we have $c_{a, b}(t) := \tfrac{\eta(t) - \xi(t)}{\xi(t)} \neq 0 $ since $-a \neq b$. Therefore, 
    \begin{align*}
            \frac{n^2}{k^2}\bW^{(2)}\big[ P^{(n, k)}_t, \mu^{\otimes k}_t \big]^2 
            =  \frac{n^2}{k^2}\xi(t) \Bigg[\bigg(1 + \frac{k}{n} c_{a, b}(t)\bigg)^{\frac{1}{2}} - 1 \Bigg] ^2 = \xi(t) c_{a, b}(t)^2 \frac{\big[\big(1 + \tfrac{k}{n}c_{a, b}(t)\big)^{\frac{1}{2}} - 1\big]^2}{\big(\tfrac{k}{n}c_{a, b}(t)\big)^2}.
        \end{align*}
    As noticed in \cite{lacker2022Hierarchies}, $\frac{[(1 + x)^{\frac{1}{2}} - 1]^2}{x^2} \rightarrow\frac{1}{4}$ as $x \rightarrow 0$. Since $\xi(t) c_{a, b}(t)^2 > 0$, we conclude \begin{equation*}
        \lim_{n \rightarrow \infty}  \frac{n^2}{k^2}\bW^{(2)}\big[ P^{(n, k)}_t, \mu^{\otimes k}_t \big]^2 = \frac{\xi(t) c_{a, b}(t)^2}{4} > 0.
    \end{equation*}
    \end{example}

    \section{Dynamics of marginals of locally interaction equations}
    \label{section:LocalEq}
    
    Next, we turn our attention away from mean-field dynamics and instead focus on collections of locally interacting equations. 
    
    Let $V$ be a set and let $E \subset \big\{ \{u, v\}: u, v \in V\big\}$. Then we say that $(V, E)$ is a graph. For any $u \in V$, let $N_u:=\big\{v \in V: \{u, v\} \in E \big\}$. A graph is referred to as a \emph{tree} if any two vertices are connected by exactly one path. A tree $(V, E)$ paired with a vertex $\oSlash \in V$ is said to be a rooted tree. A tree $(V, E)$ is said to be $\kappa$-regular if every vertex has exactly $\kappa$-neighbours. Given the $\kappa$-regular rooted forest $(V, E, \oSlash)$ and vertex $u \in V$, we denote $\pi[u] \in V$ to be the parent vertex of $u$ while $C_u$ is the children of $u$. Hence, whenever $u \in V \backslash\{\oSlash\}$, we have that $N_u = C_u \cup \{ \pi[u] \}$.  
    
    Let $(V, E)$ be a tree and let $(\cM, d)$ be a metric space. For some $L:V \to \cM$, we say that $(V, E, L)$ is a marked graph with marks taking values in the metric space $(\cM, d)$. Let $\cG_{\ast}(\cM)$ be the collection of all rooted trees with marks taking values in a metric space $(\cM, d)$. We endow $\cG_{\ast}(\cM)$ with the topology of local convergence. When $(\cM, d)$ is a Polish space, we conclude that $\cG_{\ast}(\cM)$ is also a Polish space. 
    
    For any rooted tree $(V, E, \oSlash)$, we denote $\cG_{\ast}^{(V, E)}(\cM)$ to be the collection of all marked rooted trees $(\tilde{V}, \tilde{E}, \tilde{\oSlash}, \tilde{L})$ such that the rooted tree $(\tilde{V}, \tilde{E}, \tilde{\oSlash})$ is isomorphic to $(V, E, \oSlash)$. 
    
    Let $(V, E, \oSlash)$ be a $\kappa$-regular rooted tree and consider the system of locally interacting SDEs of the form
    \begin{equation}
    	\label{eq:Interacting_Equation-E} 
    	\left.
    	\begin{aligned}
	    	&dX_t^u = b\Big( t, X^u[t], \mu^u\big( X[t] \big) \Big) dt + dZ_t^u
    		\quad
    		X_0^u \sim \lambda_0
    		\\
    		&\mu^u(X[t]) = \tfrac{1}{\kappa} \sum_{v\in N_u} \delta_{X^u[t]} 
    	\end{aligned}
    	\right\}
    	\quad
    	u \in V
    \end{equation}
    where the collection $(Z^u)_{u\in V}$ are independent and identically distributed Gaussian Volterra processes. 
    
    Our goal (which is achieved in Theorem \ref{theorem:LocalEquation} below) is to find a (distribution dependent) stochastic differential equation which has the same probability distribution as the collection of stochastic processes $(X^u[t])_{u\in B_1(\oSlash)}$ where $B_1(\oSlash) = \{\oSlash\} \cup N_{\oSlash}$ is ball of radius 1 centred at the root $\oSlash$. We refer to this equation as the \emph{local equation}. The local equation is itself a conditional McKean-Vlasov equation. Further, under stronger assumptions we are also able to show that there is a unique weak solution to the local equation with Theorem \ref{theorem:Local-Uniq}. 

    First, we need to extend the concept of progressive measurability to more general filtered spaces:
    \begin{definition}
        \label{def:ProgressiveMeas}
        Let $f:[0,T] \times \cC_T^d \times \cP_2( \cC_T^d) \to \bR^d$ be a function. We say that $f$ is \emph{progressively measurable} if for every $t\in [0,T]$, the map
        \begin{equation*}
            [0, t] \times \cC_T^d \times \cP_2(\cC_T^d) \ni (s, X, \mu) \mapsto f\big(s, X, \mu \big)
        \end{equation*}
        is $\cB([0,t]) \otimes \cB(\cC_t^d) \otimes \cB\big( \cP_2(\cC_t^d) \big)$-measurable. 
    \end{definition}

    Next, we introduce some assumptions under which we start proving weak existence and uniqueness:
    \begin{assumption}
    	\label{assumption:local}
        Let $(V, E, \oSlash)$ be a rooted tree and let $M \in L^2\big([0,T]; \bR \big)$. Suppose that:
	    \begin{enumerate}
            \item $K: [0,T] \to L^2\big( [0,T]; \lin(\bR^d, \bR^d) \big)$ is a Volterra kernel that satisfies Assumption \ref{assumption:VolterraK};
		    \item $b:[0,T] \times \cC_T^d \times \cP_2( \cC_T^d) \to \bR^d$ is progressively measurable (in the sense of Definition \ref{def:ProgressiveMeas}) and suppose for every $(X, \mu) \in \cC_T^d \times \cP_1(\cC_T^d)$ that
            \begin{align*}
                &\Big| Q^{b}\big( t, X[t], \mu[t] \big) \Big| 
                \leq 
                M_t \cdot \Big( 1 + \| X^u \|_{\infty, t} + \bW_{\infty, t}^{(1)}\big[ \mu[t], \delta_{0[t]} \big] \Big) < \infty;
            \end{align*}
    	\end{enumerate}
    \end{assumption}
    
    \begin{theorem}
    	\label{theorem:Exist+Uniq-local}
    	Let $\kappa\geq 2$ and let $(V, E, \oSlash)$ be the $\kappa$-regular rooted tree. Let $\lambda_0\in \cP_2(\bR^d)$, let $K:[0,T] \to L^2\big( [0,T]; \lin(\bR^d, \bR^d) \big)$ and let $b:[0,T] \times \cC_T^d \times \cP_2( \cC_T^d) \to \bR^d$. 
     
        Suppose that Assumption \ref{assumption:local} is satisfied. Then there exists a unique
        \begin{equation*}
            \Big( (\Omega, \cF, \bP), \big(V, E, \oSlash, (X_0^u, Z^u, X^u)_{u\in V} \big) \Big)
        \end{equation*}
        such that
        \begin{align*}
            &(\Omega, \cF, \bP) 
            \quad \mbox{is a probability space and}
            \\
            &\omega \mapsto \big(V, E, (X_0^u, Z^u, X^u)_{u\in V} \big) \in \cG_{\ast}^{(V, E)}\big( \bR^d \times \cC_{0,T}^d \times \cC_T^d \big)
        \end{align*}
        and
        \begin{enumerate}
            \item For every finite $U \subseteq V$, the collection of random variables $(X_0^u)_{u \in U}$ are independent and identically distributed with distribution $\lambda_0$. Similarly, for every finite $U \subseteq V$, the collection of Gaussian Volterra processes $(Z^u)_{u\in U}$ as defined as in Equation \eqref{eq:GaussVolterra} are independent;
            \item The expectation
            \begin{equation*}
                \sup_{u \in V} \bE\Big[ \big\| X^u \big\|_{\infty}^2 \Big] < \infty;
            \end{equation*}	
            \item The random variable
            \begin{equation*}
                \omega \mapsto \bigg( V, E, \oSlash, \Big( \int_0^T \Big| b\big( t, X^{u}[t], \mu^u\big( X[t]\big) \big) \Big| dt \Big)_{u \in V} \bigg) \in \cG_{\ast}^{(V, E)}(\bR^d)
                \quad
                \bP\mbox{-almost surely; }
            \end{equation*}	
            \item The random variables
            \begin{align*}
                &\big( V, E, \oSlash, (X_{\cdot}^u)_{u\in V} \big) \quad \mbox{and}
                \\
                &\bigg( V, E, \oSlash, \Big( X_0^u + \int_0^\cdot b\big( t, X^u[t], \mu^u(X[t]) \big) dt + Z_{\cdot} \Big)_{u \in V} \bigg)\quad
                \mbox{are $\bP$-almost surely equal.}
            \end{align*}
        \end{enumerate}
        We say that
        \begin{equation*}
            \Big( (\Omega, \cF, \bP), \big( V, E, (X_0^u, Z^u, X^u)_{u\in V} \big) \Big) 
            \quad \mbox{is a weak solution to Equation \eqref{eq:Interacting_Equation-E}; }
        \end{equation*}
    \end{theorem}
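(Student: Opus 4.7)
The plan is to reduce the system \eqref{eq:Interacting_Equation-E} to one driven by independent Brownian motions via the Volterra transformation of Section~\ref{subsec:Filtering}, construct a weak solution by truncating to finite balls in the tree, and establish uniqueness via a Girsanov-type argument combined with Kolmogorov extension.

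First I would apply Propositions~\ref{proposition:Filtrations-X,newX} and~\ref{proposition:RKHS=I^*} vertex-by-vertex. For each $u\in V$, define $X^{\dagger,u}_t := X_0^u + \int_0^t L(t,s)\, dX^u_s$ and $W^u_t := \int_0^t L(t,s)\, dZ^u_s$. Then $(W^u)_{u\in V}$ is a family of independent Brownian motions, and (as a weak-solution problem) the system \eqref{eq:Interacting_Equation-E} is equivalent to the semimartingale system
\[
X^{\dagger,u}_t = X_0^u + \int_0^t Q^b\big(s, X^u[s], \mu^u(X[s])\big)\, ds + W^u_t, \qquad u\in V,
\]
where $X^u_t = X_0^u + \int_0^t K(t,s)\, dX^{\dagger,u}_s$. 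Under Assumption~\ref{assumption:local}, the drift $Q^b$ grows linearly in $\|X^u\|_{\infty,t}$ and in $\bW^{(1)}_{\infty,t}[\mu^u, \delta_{0[t]}]$, the latter being a linear function of the neighbours' supremum norms.

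For existence I would use a finite-ball approximation. For each $R\in\bN$, let $V_R := B_R(\oSlash)$, and consider the truncated system on $V_R$ in which the empirical measure $\mu^u$ for $u\in V_R$ is built from $N_u\cap V_R$, supplemented by the free Gaussian paths $X_0^v + Z^v_t$ for neighbours $v \notin V_R$. This is a coupled finite-dimensional SDE on $(\cC_T^d)^{|V_R|}$ whose drift inherits the linear growth hypothesis of \ref{enum:proposition:TalagrandIQ-2}; a vector-valued version of Proposition~\ref{proposition:TalagrandIQ} then yields a unique weak solution $(X^{u,R})_{u\in V_R}$ lying in $\cT^1_{\infty,T}$. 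A Gronwall estimate on $\bE[\|X^{u,R}\|_{\infty,t}^2]$ propagated along the tree yields $\sup_R \bE[\|X^{u,R}\|_{\infty,T}^2] < \infty$ for each fixed $u$, and tightness of the finite-dimensional marginals in $\cG_\ast^{(V,E)}(\cC_T^d)$. Extract a subsequential limit $(X^u)_{u\in V}$ and identify it as a weak solution via the martingale problem applied to bounded functionals of finitely many vertices.

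For uniqueness I would use a Girsanov-type argument. On a reference space $(\Omega, \cF, \bQ)$ carrying i.i.d.\ $(X_0^u)_{u\in V} \sim \lambda_0^{\otimes V}$ and independent Brownian motions $(W^u)_{u\in V}$, the free process $\bar{X}^u_t = X_0^u + \int_0^t K(t,s)\, dW^u_s$ has product law. Proposition~\ref{fBmEntropy} (applied vertex-wise, with the Novikov integrability secured by the same exponential estimate used in the proof of Proposition~\ref{proposition:TalagrandIQ}\ref{enum:proposition:TalagrandIQ-2}) shows that any weak solution $\bP$ is, when restricted to an appropriate saturated finite subset of vertices, absolutely continuous with respect to $\bQ$ with Girsanov density
\[
\frac{d\bP}{d\bQ}\bigg|_{U} = \exp\bigg(\sum_{u \in U} \delta\Big(\int_0^\cdot K(\cdot,s)\, Q^b\big(s, X^u[s], \mu^u(X[s])\big)\, ds\Big) - \tfrac{1}{2}\sum_{u\in U}\Big\|\int_0^\cdot b(s, X^u, \mu^u)\, ds\Big\|_{\RKHS_T}^2\bigg),
\]
a measurable functional of the canonical process uniquely determined by $b$ and $K$. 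This forces finite-dimensional marginals of $\bP$ to agree with those of the constructed solution on every finite $U$, and Kolmogorov extension yields uniqueness of the joint law.

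The main obstacle will be the passage from the truncated systems to the infinite-tree limit: because the empirical measure $\mu^u$ couples each vertex to its neighbours, the boundary convention at $\partial V_R$ propagates inwards, and one must show this influence decays with $\mathrm{dist}(u, \partial V_R)$ uniformly in $R$. The natural strategy is to iterate the linear growth bound of Assumption~\ref{assumption:local} along geodesics in the tree, producing an explicit decay controlled by $M_t$ and the graph distance; this uniform decay is also what is needed to make the Girsanov densities consistent across increasing saturated subsets, thereby underpinning both the identification of the limit as a weak solution and the uniqueness statement.
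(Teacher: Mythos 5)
The paper does not prove Theorem \ref{theorem:Exist+Uniq-local} at all: it is imported wholesale from the companion paper \cite{Hu2023Locally} (``For a proof of Theorem \ref{theorem:Exist+Uniq-local}, we refer the reader to \cite{Hu2023Locally}''), so there is no in-paper argument to compare yours against. Judged on its own terms, your proposal has two genuine gaps.

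The more serious one is in the uniqueness step. On the infinite tree the Girsanov density against the free product measure $\bQ$ has a divergent exponent (the relative entropy is a sum of strictly positive per-vertex contributions over infinitely many vertices), so there is no global density, and your fallback --- restricting to a finite ``saturated'' $U$ --- does not work as stated: for any finite $U\subset V$ the drift $Q^b(s,X^u[s],\mu^u(X[s]))$ of a vertex $u$ near $\partial U$ depends on neighbours \emph{outside} $U$, so the displayed exponential is not a measurable functional of the $U$-coordinates, and the actual $U$-marginal density of $\bP$ is the conditional expectation of a (nonexistent) global density given $\cF^{(X^u)_{u\in U}}$, not the formula you wrote. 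Hence the conclusion ``this forces the finite-dimensional marginals to agree'' does not follow. Since Assumption \ref{assumption:local} imposes only linear growth on $Q^b$ and no Lipschitz continuity in $(x,\mu)$, you also cannot substitute a Picard/pathwise contraction argument for uniqueness without strengthening the hypotheses; this is precisely why the infinite-volume well-posedness is delicate and is handled separately in \cite{Hu2023Locally}.

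The existence step has a related problem. Your identification of the subsequential limit of the truncated systems ``via the martingale problem applied to bounded functionals of finitely many vertices'' requires passing to the limit inside the drift, i.e.\ some continuity of $(x,\mu)\mapsto Q^b(t,x,\mu)$, which Assumption \ref{assumption:local} does not provide (only progressive measurability and a growth bound). Moreover, the obstacle you correctly flag --- that the boundary convention at $\partial V_R$ propagates inward --- is not resolved by ``iterating the linear growth bound along geodesics'': that yields uniform moment bounds (tightness), but not that two truncations with different boundary data converge to the same limit; decay of boundary influence is a contraction statement and again needs a Lipschitz-type hypothesis absent here. The first paragraph of your proposal (vertex-wise application of Propositions \ref{proposition:RKHS=I^*} and \ref{proposition:Filtrations-X,newX} to reduce to a Brownian semimartingale system) is sound and consistent with the paper's general strategy, but the construction and uniqueness on the infinite tree are where the real work lies, and as written both halves are incomplete.
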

    For a proof of Theorem \ref{theorem:Exist+Uniq-local}, we refer the reader to \cite{Hu2023Locally}. 

    \subsection{Symmetry condition}

    The first step to describing the marginal distributions for some local subsets of collection of locally interacting SDEs marking a $\kappa$-regular tree is understanding how the random marks associated to different vertices are correlated to one another and what their probability distributions are. 
    
    \subsubsection*{Locally interacting processes as Markov Random Fields}

    \begin{theorem}
		\label{theorem:MRF}
		Let $\kappa\geq 2$ and let $(V, E, \oSlash)$ be the $\kappa$-regular rooted tree. 

        Let $\lambda_0 \in \cP_2(\bR^d)$, let $K:[0,T] \to L^2\big( [0,T]; \lin(\bR^d, \bR^d) \big)$ and let $b:[0,T] \times \cC_T^d \times \cP_2(\cC_T^d) \to \bR^d$. Suppose that Assumption \ref{assumption:local} is satisfied. 

        Let $\big( V, E, \oSlash, (X^u)_{u \in V} \big)$ be the unique in law solution to the stochastic differential equation \eqref{eq:Interacting_Equation-E}. Then for any $t\in [0,T]$, the collection of random variables $\big( V, E, \oSlash, (X^v[t])_{v\in V} \big)$ which take their value of $\cG_{\ast}^{(V, E)}(\cC_t^d)$ is a 2-MRF. 
	\end{theorem}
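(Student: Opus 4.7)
The plan is to reduce to the semimartingale setting by applying the fundamental Volterra kernel transformation of Proposition~\ref{proposition:RKHS=I^*} at each vertex, and then to exhibit a star-graph factorization of the Radon--Nikodym density that implies the 2-MRF property via a Hammersley--Clifford-style argument.

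As a first step, for each vertex $u \in V$ define the fundamental semimartingale $X^{\dagger, u}_t := X^u_0 + \int_0^t L(t, s) dX^u_s = X^u_0 + \int_0^t Q^{b(\cdot, X^u, \mu^u(X))}_s\, ds + W^u_t$, where $W^u_t := \int_0^t L(t, s) dZ^u_s$. Since the $Z^u$ are independent across $u$ and Proposition~\ref{proposition:filtration1} shows that the Volterra transform preserves the single-vertex filtration, the $W^u$ form a family of independent Brownian motions and $\sigma(X^v[t]: v \in V) = \sigma(X^{\dagger, v}[t]: v \in V)$. It thus suffices to prove the 2-MRF property for $(X^{\dagger, v}[t])_{v \in V}$. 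The crucial local structure is preserved: $Q^{b(\cdot, X^u, \mu^u(X))}_s$ depends measurably only on the paths $(X^w[s])_{w \in \{u\} \cup N_u}$, and equivalently, via the inverse local transformation $X^w = X^w_0 + \int K(\cdot, \tau) dX^{\dagger, w}_\tau$, only on $(X^{\dagger, w}[s])_{w \in \{u\} \cup N_u}$.

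The next step is to fix a finite subtree $U \subseteq V$ containing the sets on which conditional independence is to be checked, and to compute the joint density of $(X^{\dagger, u}[t])_{u \in U}$ under $\bP$ with respect to a reference product measure of independent Brownian increments and initial conditions on $U$. Proposition~\ref{fBmEntropy} applied vertex-wise (equivalently, classical Girsanov, since each $X^{\dagger, u}$ is an $\bF$-semimartingale) shows that the density factorizes as $\prod_{u \in U} \psi_u(X^{\dagger, \{u\} \cup N_u}[t])$, where each factor $\psi_u$ is a measurable functional of the data on the star $\{u\} \cup N_u$. A standard Hammersley--Clifford argument then yields the 2-MRF property: for any pair of sets $A, B \subseteq V$ whose 2-boundaries separate them, the conditional law of $X_A$ given the 2-boundary data does not involve variables in $B$, because no factor $\psi_u$ couples a vertex strictly inside $A$ with one outside the 2-neighborhood of $A$.

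The main obstacle will be passing from the finite-subtree truncation to the full $\kappa$-regular tree. This is handled by a projective-limit argument exploiting weak uniqueness from Theorem~\ref{theorem:Exist+Uniq-local}: the 2-MRF property holds on each finite-subtree approximation, and by consistency of the finite-dimensional marginals it transfers to the infinite tree. A subsidiary technical point is the handling of boundary neighbors lying outside the finite truncation, which is addressed by conditioning on the boundary paths throughout the projective limit, using the square-integrability bound on $Q^b$ supplied by Assumption~\ref{assumption:local} to control the stochastic integrals that appear in the Girsanov density.
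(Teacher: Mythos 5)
You should note first that the paper does not actually prove Theorem \ref{theorem:MRF}: it states the result and defers the proof entirely to the companion work \cite{Hu2023Locally} (``A proof of the 2-Markov Random Field property \dots can be found in \cite{Hu2023Locally}''). So there is no in-paper proof to compare against. That said, your strategy --- transform each vertex process to its fundamental semimartingale via $L$, factorize the Girsanov density of a finite system over the stars $\{u\}\cup N_u$, and invoke a Hammersley--Clifford-type argument to get the 2-MRF property --- is exactly the route suggested by the paper's introduction and by the Brownian antecedent \cite{lacker2020Locally}, so the skeleton is right.

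There are, however, two genuine gaps. The smaller one: the Girsanov density contains the terms $\int_0^t Q^{b^u}_s\, dW^u_s$, and you assert without argument that each such term is a measurable functional of the star paths $(X^{\dagger,w}[t])_{w\in\{u\}\cup N_u}$. A stochastic integral is only defined up to null sets of the measure under which it is constructed, and exhibiting a single jointly measurable version that serves simultaneously as a clique functional for the factorization is precisely the technical content of the corresponding step in \cite{lacker2020Locally}; it cannot be waved through. The larger gap is the finite-to-infinite passage. When you truncate to a finite subtree $U$, the marginal law of $(X^{\dagger,u})_{u\in U}$ under the infinite system does \emph{not} have a density over the reference measure that factorizes over stars contained in $U$: boundary vertices of $U$ have drifts depending on neighbours outside $U$, so the $U$-marginal density involves a conditional expectation of the exterior configuration, which destroys the product structure. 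Your proposed fix --- ``conditioning on the boundary paths throughout the projective limit'' --- is circular, since the conditional law of the exterior given the boundary is exactly the object whose structure you are trying to establish. Conditional independence is also not preserved under weak or projective limits in general, so ``consistency of the finite-dimensional marginals'' does not transfer the 2-MRF property to the infinite tree. This finite-to-infinite step is the principal difficulty in \cite{lacker2020Locally} (and presumably in \cite{Hu2023Locally}), where it is handled by a quantitative approximation of the infinite system by autonomous finite systems together with a limiting argument for the conditional laws; your proposal needs that argument, not just an appeal to consistency.
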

    A proof of the 2-Markov Random Field property for a general class of locally interacting processes can be found in \cite{Hu2023Locally}
    
    \subsubsection*{Symmetry of the conditional law}
    
    The first thing to note is that the law of any weak solution to Equation \eqref{eq:Interacting_Equation-E} satisfies a certain symmetry property derived from the inherent symmetry of the $\kappa$-regular graph. 
    \begin{proposition}
        \label{prop:SymmetryLaw}
        Let $\kappa\geq 2$ and let $(V,E,\oSlash)$ be the $\kappa$-regular rooted tree and let $(\overline{V}, \overline{E}, \overline{\oSlash})$ be the $B_1(\oSlash)$-subgraph of $(V, E, \oSlash)$. 
        
        Let $\lambda_0 \in \cP_2(\bR^d)$, let $K:[0,T] \to L^2\big( [0,T]; \lin(\bR^d, \bR^d) \big)$ and let $b:[0,T] \times \cC_T^d \times \cP_2(\cC_T^d) \to \bR^d$. Suppose that Assumption \ref{assumption:local} is satisfied and let 
        \begin{equation*}
            \Big( (\Omega, \cF, \bP), (V, E, (X_0^u, Z^u, X^u)_{u\in V} \Big)
        \end{equation*}
        be the unique weak solution to Equation \eqref{eq:Interacting_Equation-E}. 
        
        Let $t \in [0,T]$. Then for every $u \in V$ the conditional law of the collection of random variables $\big( X^v[t] \big)_{v \in B_1(u)}$ conditioned on $\big( X^u[t], X^{\pi[u]}[t] \big)$ is independent of the choice of $u$. 
	
    	More specifically, for every $t\in [0, T]$ there exists a measurable map
        \begin{equation*}
            \Lambda_t: \cC_t^d \times \cC_t^d \to \cP_2\Big( \cG_\ast^{(\overline{V}, \overline{E})}(\cC_t^d) \Big)
        \end{equation*}
        such that for every Borel set $\cA \in \cB\big( \cG_\ast^{(\overline{V}, \overline{E})}(\cC_t^d) \big)$ we have that
    	\begin{equation*}
    		\Lambda_t\big( X^u[t], X^{\pi[u]}[t] \big)\big[ \cA \big] = \bP\Big[ \big( X^{v}[t] \big)_{v\in B_1(u)} \in \cA \Big| X^u[t], X^{\pi[u]}[t] \Big]
    		\quad\bP\mbox{-almost surely.}
    	\end{equation*}
    \end{proposition}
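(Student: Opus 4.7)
The plan is to exploit the fact that the coefficients in \eqref{eq:Interacting_Equation-E} depend only on the \emph{unrooted} local neighborhood structure, combined with the transitive action of $\mathrm{Aut}(V,E)$ on directed edges of a $\kappa$-regular tree, and then invoke the weak uniqueness established in Theorem \ref{theorem:Exist+Uniq-local}. Concretely, for any vertex $u \neq \oSlash$ there is a graph automorphism $\phi:V\to V$ of the unrooted $\kappa$-regular tree (forgetting the distinguished root) satisfying $\phi(\oSlash)=u$ and $\phi(c)=\pi[u]$ for some pre-chosen child $c \in C_{\oSlash}$. Since $\mathrm{Aut}(V,E)$ is transitive on ordered pairs of adjacent vertices in a $\kappa$-regular tree, such a $\phi$ always exists, and it sends $B_1(\oSlash)$ bijectively to $B_1(u)$.

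Given $\phi$, define the pushed-forward system $\tilde X^v := X^{\phi^{-1}(v)}$, $\tilde Z^v := Z^{\phi^{-1}(v)}$ and $\tilde X_0^v := X_0^{\phi^{-1}(v)}$. Because $\phi$ is a graph automorphism we have $\phi^{-1}(N_v) = N_{\phi^{-1}(v)}$, and hence
\begin{equation*}
\mu^v\bigl(\tilde X[s]\bigr) = \tfrac{1}{\kappa}\sum_{w\in N_v}\delta_{X^{\phi^{-1}(w)}[s]} = \tfrac{1}{\kappa}\sum_{w'\in N_{\phi^{-1}(v)}}\delta_{X^{w'}[s]} = \mu^{\phi^{-1}(v)}\bigl(X[s]\bigr).
\end{equation*}
Substituting this into the SDE satisfied by $X^{\phi^{-1}(v)}$ shows that $(\tilde X^v)_{v\in V}$ is another weak solution to \eqref{eq:Interacting_Equation-E}, with the same initial distribution $\lambda_0^{\otimes V}$ and the same law of driving noises (both properties follow from the i.i.d.\ structure of $(X_0^u)_{u\in V}$ and $(Z^u)_{u\in V}$). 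By the weak uniqueness in Theorem \ref{theorem:Exist+Uniq-local}, $(\tilde X^v)_{v\in V}$ and $(X^v)_{v\in V}$ have the same law on $\cG_\ast^{(V,E)}(\cC_T^d)$; equivalently, $(X^{\phi^{-1}(v)})_{v\in V}\stackrel{d}{=}(X^v)_{v\in V}$. Restricting to $B_1(u)\cup\{u,\pi[u]\} = B_1(u)$ and using $\phi(B_1(\oSlash))=B_1(u)$, $\phi(\oSlash)=u$, $\phi(c)=\pi[u]$, we obtain
\begin{equation*}
\Big(\bigl(X^{v}[t]\bigr)_{v\in B_1(u)},\,X^u[t],\,X^{\pi[u]}[t]\Big) \stackrel{d}{=} \Big(\bigl(X^{v'}[t]\bigr)_{v'\in B_1(\oSlash)},\,X^{\oSlash}[t],\,X^{c}[t]\Big),
\end{equation*}
once both tuples are viewed as elements of $\cG_\ast^{(\overline V,\overline E)}(\cC_t^d) \times \cC_t^d \times \cC_t^d$ under the canonical identification of $B_1(u)$ and $B_1(\oSlash)$ as rooted stars.

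From equality of joint laws one obtains equality of (regular) conditional laws: by standard disintegration on the Polish space $\cC_t^d\times\cC_t^d$, there is a measurable kernel $\Lambda_t$ realising the conditional law of $(X^{v}[t])_{v\in B_1(\oSlash)}$ given $(X^{\oSlash}[t],X^{c}[t])$, and the displayed distributional identity above forces this same kernel to realise the conditional law of $(X^{v}[t])_{v\in B_1(u)}$ given $(X^u[t],X^{\pi[u]}[t])$ for every $u$. The main technical care needed concerns the target space: the $\kappa$ neighbours of $u$ other than $\pi[u]$ are unordered, so the identification of $B_1(u)$ with $(\overline V,\overline E,\overline \oSlash)$ is only canonical up to the subgroup of $\mathrm{Aut}(\overline V,\overline E,\overline \oSlash)$ fixing the edge corresponding to $(u,\pi[u])$; working on the isomorphism-class space $\cG_\ast^{(\overline V,\overline E)}(\cC_t^d)$ rather than on an ordered tuple exactly quotients out this ambiguity, and the i.i.d.\ structure of the noises ensures the resulting measure does not depend on the choice of identification. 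This is the only delicate step; the rest is a direct chase of definitions.
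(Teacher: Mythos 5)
Your proposal is correct and follows essentially the same route as the paper: push the system forward along a tree automorphism carrying the directed edge $(\oSlash,c)$ to $(u,\pi[u])$, invoke the weak uniqueness of Theorem \ref{theorem:Exist+Uniq-local} to identify the laws, and then disintegrate to transfer the equality to conditional kernels. Your extra remark about the unordered-neighbour ambiguity being quotiented out by working in $\cG_\ast^{(\overline V,\overline E)}(\cC_t^d)$ is a point the paper leaves implicit, but it does not change the argument.
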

    Proposition \ref{prop:SymmetryLaw} is a generalisation of \cite{lacker2020marginal}*{Proposition 3.17} which proves a similar result for Galton-Watson random graphs marked with SDEs driven by Brownian motion. We prove this result for completeness:
    \begin{proof}
    	For $(V, E, \oSlash)$, let $\phi:V \to V$ be a graph isomorphism ($(u, v) \in E$ if and only if $(\phi[u], \phi[v]) \in E$) that satisfies $\phi[\oSlash] = u$. Then
        \begin{equation*}
            (\tilde{V}, \tilde{E}, \tilde{\oSlash}):= \Big( \big\{ \phi[u]: u \in V \big\}, \big\{ \{\phi[u], \phi[v]\}: \{u, v\} \in E \big\}, u \Big) \in \cG_{\ast}
        \end{equation*}
        is equivalent to the $\kappa$-regular tree and the system of locally interacting equations
        \begin{equation*}
            \left.
            \begin{aligned}
                &d\tilde{X}_t^{\tilde{u}} = b\Big( t, \tilde{X}^{\tilde{u}}[t], \mu^{\tilde{u}}\big( \tilde{X}[t] \big) \Big) dt + d\tilde{Z}^{\tilde{u}} \quad \tilde{X}_0^{\tilde{u}} \sim \lambda_0
                \\
                &\mu^{\tilde{u}}\big( \tilde{X}[t] \big) = \tfrac{1}{\kappa} \sum_{\tilde{v} \in N_{\tilde{u}}} \delta_{X^{\tilde{u}}[t]} 
            \end{aligned}
            \right\}
            \quad \tilde{u} \in \tilde{V}
        \end{equation*}
        satisfies Assumption \ref{assumption:local} and has a weak solution 
        \begin{align*}
            &\Big( (\tilde{\Omega}, \tilde{\cF}, \tilde{\bP}), \big( \tilde{V}, \tilde{E}, \tilde{\oSlash}, (\tilde{X}_0^{\tilde{u}}, \tilde{Z}^{\tilde{u}}, \tilde{X}^{\tilde{u}} )_{\tilde{u} \in \tilde{V}} \big) \Big)
            \quad\mbox{defined by}
            \\
            &(\tilde{\Omega}, \tilde{\cF}, \tilde{\bP}) = (\Omega, \cF, \bP)
            \qquad\mbox{and}\qquad
            \forall u \in V, \quad \tilde{X}_0^{\phi[u]} = X_0^u, \quad \tilde{Z}^{\phi[u]} = Z^u, \quad \tilde{X}^{\tilde{u}} = X^u. 
        \end{align*}
        By Theorem \ref{theorem:Exist+Uniq-local}, this weak solution is unique and we conclude that for every $\cA \in \cB\big( \cG_{\ast}^{B_1(\oSlash)}(\cC_T^d) \big)$
        \begin{equation}
            \label{eq:prop:SymmetryLaw:pf1}
            \bP\Big[ (X^v)_{v \in B_1(\oSlash)} \in \cA \Big] = \tilde{\bP}\Big[ (\tilde{X}^{\tilde{v}})_{\tilde{v} \in B_1(\tilde{\oSlash})} \in \cA \Big] = \bP\Big[ (X^v)_{v \in B_1(u)} \in \cA \Big]. 
        \end{equation}

        Next, let $u \in V$ and $t\in [0,T]$. We define the measurable function $\Lambda_t^u: \cC_t^d \times \cC_t^d \to \cP_2\big( \cG_{\ast}^{B_1(u)}(\cC_t^d) \big)$ by
        \begin{equation*}
            \Lambda_t^u\big( X^u[t], X^{\pi[u]}[t] \big)\big[ \cA \big] = \bP\Big[ \big( X^v[t] \big)_{v \in B_1(u)} \in \cA\Big| X^u[t], X^{\pi[u]}[t] \Big]
        \end{equation*}
        Under the isomorphism $\phi$, there exists $v' \in B_1(\oSlash)$ such that $\phi[v'] = \pi[u]$. Let $g: \cC_T^d \times \cC_T^d \to \bR$ and $h: \cG_{\ast}^{B_1(\oSlash)}(\cC_T^d) \to \bR$ be bounded and measurable functions. Then by Equation \eqref{eq:prop:SymmetryLaw:pf1}
        \begin{align*}
            \bE\bigg[ g\big(X^u[t]&, X^{\pi[u]}[t] \big) \cdot \Big\langle \Lambda_t^u\big( X^u, X^{\pi[u]}\big), h \Big\rangle \bigg]
            \\
            =&
            \bE\bigg[ g\big(X^u[t], X^{\pi[u]}[t] \big) \cdot \bE\Big[ h\big( (X^v[t])_{v\in v \in B_1(u)} \big) \Big| X^u[t], X^{\oSlash}[t] \Big] \bigg] 
            \\
            =& \bE\Big[ g\big(X^u, X^{\pi[u]} \big) \cdot h\big( (X^v)_{v \in B_1(u)} \big) \Big]
            =\bE\Big[ g\big(X^\oSlash, X^{v'} \big) \cdot h\big( (X^v)_{v \in B_1(\oSlash)} \big) \Big]
            \\
            =&
            \bE\bigg[ g\big(X^\oSlash[t], X^{v'}[t] \big) \cdot \bE\Big[ h\big( (X^v[t])_{v\in B_1(\oSlash)} \big) \Big| X^\oSlash[t], X^{v}[t] \Big] \bigg] 
            \\
            =& \bE\bigg[ g\big(X^\oSlash[t], X^{v'}[t] \big) \cdot \Big\langle \Lambda_t^\oSlash\big( X^\oSlash, X^{v'}\big), h \Big\rangle \bigg]. 
        \end{align*}
        Further, by a similar argument that concludes Equation \eqref{eq:prop:SymmetryLaw:pf1}, the measures
        \begin{equation*}
            \bP\circ \Big( X^u, X^{\pi[u]} \Big)^{-1} = \bP \circ \Big( X^{\oSlash}, X^{v'} \Big)^{-1} \in \cP_2\big( \cC_T^d \times \cC_T^d \big)
        \end{equation*}
        so we conclude that outside of a $\bP\circ (X^u[t], X^{\pi[u]}[t])^{-1}$-null set the two measures
        \begin{equation*}
            \cC_t^d \times \cC_t^d \ni \big(x[t], y[t] \big) \mapsto \Lambda_t^{\oSlash}(x, y) = \Lambda_t^{u}(x, y) \in \cP_2\big( \cG_{\ast}^{B_1(\oSlash)}(\cC_t^d) \big)
        \end{equation*}
        for every $t\in [0,T]$. 
    \end{proof}
    
    \subsection{Derivation of the marginal}
    
    Before we introduce and derive the local equation for the (finite) neighbourhood $B_1(\oSlash)$, consider the following functional:
    \begin{definition}
        Let $(V, E, \oSlash)$ be the $\kappa$-regular rooted graph, let $(\overline{V}, \overline{E}, \overline{\oSlash})$ be the $B_1(\oSlash)$-subgraph of $(V, E, \oSlash)$ and let $u \in N_\oSlash$. Let $\cL \in \cG_{\ast}^{(V, E)}(\cC_T^d)$ and let $x, y \in \cC_T^d$. 

        Given some function $f: \cG_\ast^{(\overline{V}, \overline{E})}(\cC_T^d) \to \bR^d$ such that
        \begin{equation*}
            \Big| f\Big( \overline{V}, \overline{E}, \big( X^v \big)_{v\in \overline{V}} \Big) \Big| \leq C \sum_{v \in \overline{V}} \big\| X^v \big\|_{\infty, T}^2, 
        \end{equation*}
        we denote
        \begin{align*}
            &\Gamma[f]: \cP_2\Big( \cG_\ast^{(\overline{V}, \overline{E})}(\cC_T^d) \Big) \to \bR^d
            \\
            &\Gamma[f](\cL) = \int_{\cG_{\ast}^{(V, E)}(\cC_T^d)} f\Big( V, E, (X^{v})_{v\in V} \Big) d\cL\Big( (X^v)_{v\in V} \Big)
        \end{align*}
    \end{definition}

    \begin{lemma}
        \label{lemma:Local-progmeas}
        Let $(V, E, \oSlash)$ be the $\kappa$-regular rooted graph, let $(\overline{V}, \overline{E}, \overline{\oSlash})$ be the $B_1(\oSlash)$-subgraph of $(V, E, \oSlash)$ and let $u \in N_\oSlash$. 
        
        Let $B:[0,T] \times \cC_T^d \times \cP_2(\cC_T^d) \to \bR^d$ be progressively measurable and let $\cL \in \cG_{\ast}^{(\overline{V}, \overline{E})}(\cC_T^d)$. Suppose that for every $t\in [0,T]$
        \begin{equation*}
            \int_{\cG_{\ast}^{(\overline{V}, \overline{E})}(\cC_T^d)} \Big| B\Big( t, X^\oSlash[t], \tfrac{1}{\kappa}\sum_{v\in N_{\oSlash} } \delta_{X^v[t]} \Big) \Big| d\cL\Big( \big( X^v \big)_{v\in \overline{V} }\Big)< \infty. 
        \end{equation*}
        We define $f: [0,T] \times \cG_\ast^{(\overline{V}, \overline{E})}(\cC_T^d) \to \bR^d$ by
        \begin{equation}
            \label{eq:lemma:Local-progmeas-f}
            f\Big( t, \big(\overline{V}, \overline{E}, \overline{\oSlash}, (X^v)_{v\in \overline{V}} \big) \Big) = B\Big( t, X^{\oSlash}[t], \tfrac{1}{\kappa}\sum_{v\in N_{\oSlash}} \delta_{X^{v}[t]} \Big) 
        \end{equation}
        Next, given $u\in N_\oSlash$ and $(x, y) \in (\cC_T^d)^{\times 2}$, we define
        \begin{align*}
            &\gamma^u: [0, T] \times \cC_T^d \times \cC_T^d \to \bR^d
            \quad\mbox{by}\quad
            \gamma^u\big( t, x, y \big)
            := \Gamma\Big[ f(t, \cdot) \Big]\Big[ \cL\big[ \cdot\big| (X^\oSlash,X^{u}) = (x, y) \big] \Big]. 
        \end{align*}
        Then the function $\gamma^u: [0,T] \times \cC_T^d \times \cC_T^d \to \bR^d$ is progressively measurable. 
    \end{lemma}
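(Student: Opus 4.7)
The plan is to combine three technical ingredients in sequence: joint measurability of the integrand $f$, the existence of a Borel measurable disintegration of $\cL$ along the coordinates $(X^{\oSlash}, X^u)$, and a monotone class argument transferring measurability through integration against a measurably parameterised family of probability measures.

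First, I would verify that $f$ as defined in \eqref{eq:lemma:Local-progmeas-f} is progressively measurable on $[0,T] \times \cG_{\ast}^{(\overline{V}, \overline{E})}(\cC_T^d)$ (with the obvious analogue of Definition \ref{def:ProgressiveMeas}). The map
\begin{equation*}
	\big(t, (X^v)_{v\in\overline{V}}\big) \mapsto \Big(t, X^{\oSlash}[t], \tfrac{1}{\kappa}\sum_{v\in N_{\oSlash}} \delta_{X^v[t]}\Big)
\end{equation*}
from $[0,T] \times (\cC_T^d)^{\overline{V}}$ into $[0,T] \times \cC_T^d \times \cP_2(\cC_T^d)$ is jointly continuous, and for each fixed $t \in [0,T]$ the restriction $f(s,\cdot)|_{s \leq t}$ factors through the path-truncation $(X^v)_{v} \mapsto (X^v[t])_{v\in\overline{V}}$, so progressive measurability of $B$ is inherited by $f$.

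Second, because $\overline{V} = B_1(\oSlash)$ is finite, the product space $(\cC_T^d)^{\overline{V}}$ is Polish. The standard disintegration theorem for Borel probability measures on Polish spaces therefore produces a Borel measurable family of regular conditional probabilities
\begin{equation*}
	\cC_T^d \times \cC_T^d \ni (x, y) \mapsto \cL\big[\,\cdot\,\big|\,(X^{\oSlash}, X^u) = (x, y)\big] \in \cP\big((\cC_T^d)^{\overline{V}}\big),
\end{equation*}
uniquely determined up to $\cL\circ (X^{\oSlash}, X^u)^{-1}$-null sets. This step is the technical heart of the argument, and I expect it to be the main obstacle: we need a genuinely Borel-measurable kernel everywhere, not merely an almost-everywhere-defined version, so that the composition with $f$ is unambiguously measurable in $(t, x, y)$.

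Third, I would conclude joint measurability of $\gamma^u$ by a standard monotone class / approximation argument. One first verifies measurability of $(t, x, y) \mapsto \int f(t, \cdot)\, d\cL[\,\cdot\,|\,(X^{\oSlash}, X^u) = (x,y)]$ for bounded continuous $f$ (using continuity of $\mu \mapsto \langle \mu, g\rangle$ under weak convergence in $\cP_2$), extends to bounded Borel $f$ via the monotone class theorem, and finally passes to the integrable $f$ of the lemma hypothesis using dominated convergence and the integrability bound supplied in the statement. Progressive measurability of $\gamma^u$ then follows from the progressive structure established in the first step, since for $s \leq t$ the integrand $f(s, \cdot)$ factors through the configuration restricted to $[0,t]$, and this factorisation is preserved under integration against the disintegration kernel restricted (via the canonical projection) to the appropriate sub-$\sigma$-algebra.
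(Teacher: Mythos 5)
Your proposal is correct and follows essentially the same route as the paper's proof: the paper likewise invokes the disintegration theorem to obtain a measurable kernel $(x,y)\mapsto\cL[\,\cdot\,|\,(X^{\oSlash},X^u)=(x,y)]$, notes that $\Gamma[f(t,\cdot)]$ is a measurable operation, and concludes by composing with the progressively measurable $f$. You simply supply the standard details (Polishness of the finite product space, the monotone class argument behind measurability of the parameterised integral) that the paper leaves implicit.
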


    \begin{proof}
        Courtesy of the \emph{disintegration theorem}, see \cite{Dellacherie1988Probabilities}, we know that there is a measurable mapping
        \begin{equation*}
            (\cC_T^d)^{\times 2} \ni\big( x, y \big) \mapsto \cL\big[ \cdot \big| (X^{\oSlash}, X^{u})=(x,y) \big] \in \cP_2\Big( \cG_{\ast}^{(\overline{V}, \overline{E})}(\cC_T^d) \Big)
        \end{equation*}
        Next, for every choice of $g$, the operation $\Gamma[g]$ is measurable mapping. Finally, note that the function $f$ as defined in Equation \eqref{eq:lemma:Local-progmeas-f} is progressively measurable and the composition of a measurable function and a progressively measurable function is measurable. 
    \end{proof}

    Next, let $b:[0, T] \times \cC_T^d \times \cP_2(\cC_T^d) \to \bR^d$ be progressively measurable and consider the conditional McKean-Vlasov dynamics
    \begin{equation}
        \label{eq:localEquation}
        \left.
        \begin{aligned}
            &X_t^\oSlash = X_0^{\oSlash} + \int_0^t b\Big( s, X^{\oSlash}[s], \tfrac{1}{\kappa} \sum_{v \in N_\oSlash} \delta_{X^v[s]} \Big) ds + Z_t^{\oSlash} 
            \\
            u\in N_{\oSlash},\quad &X_t^u = X_0^u + \int_0^t K(t, s) \Gamma\big[ Q^b(s, \cdot ) \big]\big[ \mu_{X^{u}, X^{\oSlash}} \big] ds + Z_t^{u}
            \\
            &\mu_{x,y} = \bP\Big[ \cdot \Big| (X^\oSlash, X^u) = (x, y) \Big] \circ \Big( \overline{V}, \overline{E}, \overline{\oSlash}, (X^u)_{u \in \overline{V}} \Big)^{-1}
        \end{aligned}
        \right\}
    \end{equation}
    
    This first result (which relies on the Mimicking Theorem) implies the existence result for Theorem \ref{theorem:localEquation-fbm}:
    \begin{theorem}
    	\label{theorem:LocalEquation}
        Let $(V, E, \oSlash)$ be the $\kappa$-regular rooted graph and let $(\overline{V}, \overline{E}, \overline{\oSlash})$ be the $B_1(\oSlash)$-subgraph of $(V, E, \oSlash)$. 
        
        Let $\lambda_0 \in \cP_2(\bR^d)$, let $K: [0,T] \to L^2\big([0,T]; \lin(\bR^d, \bR^d) \big)$ and let $b:[0,T] \times \cC_T^d \times \cP_2\big( \cC_T^d \big) \to \bR^d$. Suppose that Assumption \ref{assumption:local} is satisfied. Then there exists
        \begin{equation*}
            \Big( (\Omega, \cF, \bP), \big( \overline{V}, \overline{E}, \overline{\oSlash}, (X_0^u, Z^u, X^u)_{u\in \overline{V}} \big) \Big)
        \end{equation*}
        such that
        \begin{align*}
    		&(\Omega, \cF, \bP) \quad \mbox{is a probability space and}
    		\\
    		&\omega \mapsto \big( \overline{V}, \overline{E}, \overline{\oSlash}, (X_0^u, Z^u, X^u)_{u\in \overline{V}} \big) \in \cG_{\ast}^{(\overline{V}, \overline{E})}( \bR^d \times \cC_{0,T}^d \times \cC_T^d),
        \end{align*}
        and
        \begin{enumerate}
		\item The collection of random variables $(X_0^v)_{v\in \overline{V}}$ are independent and identically distributed each with distribution $\lambda_0$. Similarly, the collection of Gaussian Volterra processes $(Z^u)_{u\in \overline{V}}$ as defined in Equation \eqref{eq:GaussVolterra} are independent;
		\item The expectation
		\begin{equation*}
			\sup_{u\in \overline{V}} \bE\Big[ \big\| X^u \big\|_{\infty, T}^2 \Big] < \infty;
		\end{equation*}
		\item The random variable
		\begin{equation*}
			\omega \mapsto \int_0^T \Big| Q^b\big( t, X^{\oSlash}[t], \tfrac{1}{\kappa} \sum_{v\in N_{\oSlash}} \delta_{X^v[t]} \Big) \Big|^2 dt < \infty
			\quad\bP\mbox{-almost surely;}
		\end{equation*}
            \item For every $u \in N_{\oSlash}$, we define
            \begin{align*}
                &\mu_{x, y}^u:= \bP\Big[ \cdot \Big| (X^{\oSlash}, X^u) = (x, y) \Big] \circ \Big( V, E, \oSlash, (X^v)_{v\in V} \Big)^{-1} \in \cP\big( \cG_{\ast}^{(\overline{V}, \overline{E})}(\cC_T^d)\big), 
                \\
                &\gamma^u\big(t, x, y\big):= \Gamma\big[Q^b(t, \cdot) \big]\big[ \mu_{x, y}^u \big]. 
            \end{align*}
            For every $u \in N_{\oSlash}$, the random variable
            \begin{equation*}
                \omega \mapsto \int_0^T \Big| \gamma^u\big( t, X^u[t], X^{\oSlash}[t] \big) \Big|^2 dt < \infty \quad \bP\mbox{-almost surely;}
            \end{equation*}
            \item The random variable
            \begin{equation*}
                X_\cdot^\oSlash = X_0^\oSlash + \int_0^\cdot b\Big( t, X^{\oSlash}[t], \tfrac{1}{\kappa}\sum_{v\in N_\oSlash} \delta_{X^v[t]} \Big) dt + Z^\oSlash_{\cdot}
            \end{equation*}
            and for every $u \in N_{\oSlash}$ the random variable
            \begin{equation*}
                X_\cdot^u = X_0^u + \int_0^\cdot K(\cdot, t) \gamma^u\Big( t, X^{u}[t], X^\oSlash[t] \Big) dt + Z^u_{\cdot};
            \end{equation*}
        \end{enumerate}
        We say that 
        \begin{equation*}
            \Big( (\Omega, \cF, \bP), \big( \overline{V}, \overline{E}, \overline{\oSlash}, (X_u^u, Z^u, X^u)_{u\in \overline{V}} \big) \Big)
            \quad
            \mbox{is a weak solution to Equation \eqref{eq:localEquation}}
        \end{equation*}
        Secondly, let
        \begin{equation*}
            \Big( (\Omega, \cF, \bP), \big( V, E, \oSlash, (X_0^u, Z^u, X^u)_{u\in V} \big) \Big)
        \end{equation*}
        be a weak solution to Equation \eqref{eq:Interacting_Equation-U}. Then
        \begin{equation*}
            \Big( (\Omega, \cF, \bP), \big( \overline{V}, \overline{E}, \overline{\oSlash}, (X_0^u, Z^u, X^u)_{u\in \overline{V}} \big) \Big)
        \end{equation*}
        is a weak solution to Equation \eqref{eq:localEquation}. 
    \end{theorem}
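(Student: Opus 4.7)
My strategy is to exploit Theorem \ref{theorem:Exist+Uniq-local} on the full $\kappa$-regular tree $(V,E,\oSlash)$ together with the Mimicking Theorem \ref{brownianFilteringThm} to show that the restriction to $\overline{V}=B_1(\oSlash)$ of a weak solution to \eqref{eq:Interacting_Equation-E} solves the local equation \eqref{eq:localEquation}. This single argument simultaneously delivers both the existence claim and the ``Secondly'' assertion of the theorem. I would therefore start by invoking Theorem \ref{theorem:Exist+Uniq-local} to obtain a weak solution $\big((\Omega,\cF,\bP),(V,E,\oSlash,(X_0^u,Z^u,X^u)_{u\in V})\big)$ on the entire tree, from which the moment bound $\sup_{u\in V}\bE[\|X^u\|_{\infty,T}^2]<\infty$ and the integrability of $Q^b$ along each vertex are inherited via Assumption \ref{assumption:local}.

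The $\oSlash$-equation of \eqref{eq:localEquation} is immediate: because $\mu^{\oSlash}(X[t])$ is a function of $(X^v)_{v\in N_\oSlash}\subset\overline{V}$, the original SDE for $X^\oSlash$ is already of the stated form. For each $u\in N_\oSlash$ the drift $b(t,X^u[t],\mu^u(X[t]))$ depends additionally on $(X^w)_{w\in C_u}\subset V\setminus\overline{V}$, so I would apply the Mimicking Theorem \ref{brownianFilteringThm} to the joint process $\mathbf{Y}:=(X^v)_{v\in\overline{V}}$, which solves $\mathbf{Y}_t=\mathbf{Y}_0+\int_0^t\mathbf{b}_s\,ds+\mathbf{Z}_t$ for the vector drift $\mathbf{b}_s=(b(s,X^v[s],\mu^v(X[s])))_{v\in\overline{V}}$ and driver $\mathbf{Z}=(Z^v)_{v\in\overline{V}}$. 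The theorem produces, on an extension of the probability space, a weak solution with drift $K(t,\cdot)\tilde Q(s,\mathbf{Y}[s])$ where $\tilde Q=\bE[Q^{\mathbf{b}}_s\mid \mathbf{Y}[s]]$. For the $\oSlash$-component this conditional expectation equals $Q^b(s,X^\oSlash,\mu^\oSlash(X))$ since $\mu^\oSlash$ is $\mathbf{Y}$-measurable, and inverting the Volterra convolution via Proposition \ref{proposition:Filtrations-X,newX} recovers the $\oSlash$-equation of \eqref{eq:localEquation}. For each $u\in N_\oSlash$ the task is then to identify the component
\begin{equation*}
\bE\big[Q^b\big(s,X^u,\mu^u(X)\big)\bigm|\mathbf{Y}[s]\big]
\end{equation*}
with $\gamma^u(s,X^u[s],X^\oSlash[s])=\Gamma[Q^b(s,\cdot)][\mu^u_{X^u,X^\oSlash}]$.

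This identification is the key technical step and the main obstacle. Since $Q^b(s,X^u,\mu^u(X))$ depends on $(X^w)_{w\in B_1(u)}=\{X^u,X^\oSlash\}\cup(X^w)_{w\in C_u}$, what I need is that, conditionally on $(X^u,X^\oSlash)$, the children $(X^w)_{w\in C_u}$ are independent of the siblings $(X^{u'})_{u'\in N_\oSlash\setminus\{u\}}$. Removing $\{\oSlash,u\}$ from the tree disconnects the subtree rooted below $u$ from the rest of $V$, and I would extract this conditional independence from the 2-MRF property of Theorem \ref{theorem:MRF} applied with separator $S=\{u,\oSlash\}$; the tower property then collapses the conditional expectation to the one defining $\gamma^u$, and the universal form of that conditional law is guaranteed by Proposition \ref{prop:SymmetryLaw}. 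Progressive measurability of $\gamma^u$ is supplied by Lemma \ref{lemma:Local-progmeas}, the $L^2$-integrability of $\gamma^u(t,X^u,X^\oSlash)$ follows by Jensen from the growth bound on $Q^b$ in Assumption \ref{assumption:local}, and all remaining conditions (initial laws, independence of the $(Z^v)_{v\in\overline{V}}$, and the a.s.\ identities) transfer from the full-tree solution without further work. The delicate point that may force a more detailed argument is that Theorem \ref{theorem:MRF} is phrased as a 2-MRF rather than a classical Markov property, so the separation argument might need to be supplemented by a direct disintegration on the subtree below $u$ via Proposition \ref{prop:SymmetryLaw}.
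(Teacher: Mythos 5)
Your proposal follows essentially the same route as the paper's proof: apply the Mimicking Theorem \ref{brownianFilteringThm} to the vector process $(X^v)_{v\in\overline{V}}$ extracted from the full-tree solution of Theorem \ref{theorem:Exist+Uniq-local}, observe that the conditional expectation for the root component is trivial, collapse the conditional expectation for each $u\in N_\oSlash$ onto $(X^u,X^{\oSlash})$ via the 2-MRF factorisation of Theorem \ref{theorem:MRF}, and identify the resulting conditional law with $\mu_{x,y}$ via Proposition \ref{prop:SymmetryLaw}. The "delicate point" you flag is handled in the paper exactly as you anticipate, by writing the conditional law of $(X^v[t])_{v\in B_2(\oSlash)}$ given $(X^u[t])_{u\in\overline{V}}$ as a product over $u\in N_\oSlash$ of the conditional laws of the children given $(X^u[t],X^{\oSlash}[t])$.
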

    
    \begin{remark}
    	We emphasise here that we could generalise the assumptions on the rooted tree $(V, E, \oSlash)$ to those of \cite{lacker2020marginal} where the graph structure can be random. We choose not to do this as it simplifies the prerequisite material for this paper. Including this would have allowed for collections of interacting processes indexed by a Galton-Watson tree which corresponds to the case when the spacial environment within which the processes interact is random. These arise naturally in applications since the Galton-Watson tree is the appropriate scaling limit of many configuration models which are commonly used models in random graph theory. 
    \end{remark}
    
    \begin{proof}[Proof of Theorem \ref{theorem:LocalEquation}]
    	Firstly, let $\big( (\Omega, \cF, \bP), \big(X_0^u, Z^u, X^u)_{u\in V} \big)$ be a weak solution to Equation \eqref{eq:Interacting_Equation-E} and consider the distribution of the sub-collection of random variables $(X^u)_{u \in \overline{V}}$
    	\begin{equation}
    		\label{eq:theorem:LocalEquation-p1}
    		\left.
    		\begin{aligned}
    			X_t^{\oSlash} =& X_0^{\oSlash} + \int_0^t b\Big( s, X^{\oSlash}[s], \mu^{\oSlash}\big( X[t]\big) \Big) ds + Z_t^{\oSlash}
    			\\
    			u \in N_{\oSlash},\quad X_t^u =& X_0^{u} + \int_0^t b\Big( s, X^{u}[s],\mu^u\big( X[t] \big) \Big) ds + Z_t^u
    		\end{aligned}
    		\quad
    		\right\}
    	\end{equation}
    	Due to the fact that for each $u \in N_{\oSlash}$, the dynamics of $X^u$ are dependent on $N_u$ (which is not contained in the set $B_1(\oSlash)$), this is not a stochastic differential equation but only a stochastic process. For each $u \in N_{\oSlash}$, let us denote $\tilde{Q}_u: [0,T] \times \cG_{\ast}^{(\overline{V}, \overline{E})}(\cC_T^d) \to \bR^d$ to be the progressively measurable functions
    	\begin{equation*}
    		\tilde{Q}_u\Big( t, \big( X^{u}[t] \big)_{u\in \overline{V}} \Big) = \bE\bigg[ Q^b\Big( t, X^{u}[t], \mu^u\big( X[t] \big) \Big) \bigg| \big( X^u[t] \big)_{u\in \overline{V}} \bigg]. 
    	\end{equation*}
    	We apply Theorem \ref{brownianFilteringThm} to conclude that there exists an extension
    	\begin{equation*}
    		\big( \hat{\Omega}, \hat{\cF}, \hat{\bF}, \hat{\bP} \big)
    		\quad \mbox{of}\quad
    		\big( \Omega, \cF, \bF^{(\overline{V}, \overline{E}, \overline{\oSlash})}, \bP \big)
    	\end{equation*}
    	(where 
    	\begin{equation*}
    		\bF^{(\overline{V}, \overline{E}, \overline{\oSlash})} = \Big( \cF_t^{(X^{u})_{u\in \overline{V}}} \Big)_{t\in [0,T]}
    	\end{equation*}
    	is the filtration generated by $\big(\overline{V}, \overline{E}, \overline{\oSlash}, (X^u)_{u \in \overline{V}} \big)$) carrying a Gaussian process with the same covariance as the collection of Gaussian processes $(Z^u)_{u \in \overline{V}}$ such that a weak solution to the stochastic differential equation
    	\begin{equation}
    		\left.
    		\begin{aligned}
    			X_t^{\oSlash} =& X_0^{\oSlash} + \int_0^t b\Big( s, X^{\oSlash}[s], \mu^{\oSlash}\big( X[t]\big) \Big) ds + Z_t^{\oSlash}
    			\\
    			u \in N_{\oSlash},\quad X_t^u =& X_0^{u} + \int_0^t K(t, s) \tilde{Q}_u\Big( s, \big( X^u[t] \big)_{u\in \overline{V}} \Big) ds + Z_t^u
    		\end{aligned}
    		\quad
    		\right\}
    	\end{equation}
    	exists and is equal in distribution to the law of the stochastic process described in Equation \eqref{eq:theorem:LocalEquation-p1}. We remark that the drift term for $X^{\oSlash}$ has not been changed because
    	\begin{equation*}
    		\bE\bigg[ Q^b\Big(t, X^{\oSlash}[t], \mu^{\oSlash}\big( X[t] \big) \Big) \bigg| \big( X^u[t] \big)_{u\in \overline{V}} \bigg] = Q^b\Big(t, X^{\oSlash}[t], \mu^{\oSlash}\big( X[t] \big) \Big). 
    	\end{equation*}
    	Next, we remark that thanks to Theorem \ref{theorem:MRF} the conditional law
    	\begin{align*}
    		\cL\Big( \big( X^v[t] \big)_{v \in B_2(\oSlash)} \Big| \big( X^u[t] \big)_{u \in \overline{V}} \Big) 
    		= \prod_{u\in \overline{V}} \delta_{X^u[t]} \times \prod_{u \in N_{\oSlash}} \cL\Big( \big( X^v[t] \big)_{v \in C_u} \Big| X^u[t], X^{\oSlash}[t] \Big). 
    	\end{align*}
    	Therefore, for every $u \in N_{\oSlash}$ we can write
    	\begin{align}
    		\nonumber
    		&\tilde{Q}_u\Big( t, \big( X^{u}[t] \big)_{u\in \overline{V}} \Big) 
    		=
    		\bE\bigg[ Q^b\Big( t, X^{u}[t], \mu^u\big( X[t] \big) \Big) \bigg| X^{u}[t], X^{\oSlash}[t] \bigg]
    		\\
    		\label{eq:theorem:LocalEquation-p2}
    		&= \int_{\cG_{\ast}^{B_1(u)}(\cC_T^d)} Q^b\Big( t, X^u[t], \tfrac{1}{\kappa} \delta_{X^{\oSlash}[t]} + \tfrac{1}{\kappa}\sum_{v\in C_u} \delta_{x[t]} \Big) d\cL\Big( ( X^v)_{v\in B_1(u)}\big| X^u, X^{\oSlash} \Big)\Big( ( x^v )_{v\in B_1(u)} \Big). 
    	\end{align}
    	Next, thanks to Proposition \ref{prop:SymmetryLaw}, we can conclude that for $(x, y) \in \cC_T^d \times \cC_T^d$, the condition law
    	\begin{equation*}
    		\cL\Big( ( X^v )_{v \in B_1(u)} \Big| (X^u, X^{\oSlash}) = (x, y) \Big) = \cL\Big( \big( X^v\big)_{v \in \overline{V}} \Big| (X^{\oSlash}, X^u) = (x, y) \Big). 
    	\end{equation*}
    	Therefore, we can rewrite Equation \eqref{eq:theorem:LocalEquation-p2} as
    	\begin{align*}
    		\tilde{Q}_u\Big( t, \big( X^v[t]\big)_{v\in \overline{V}} \Big) =& \tilde{\Gamma}_u\Big( t, X^{u}[t], X^{\oSlash}[t] \Big)
    	\end{align*}
    	where
    	\begin{align*}
    		\tilde{\Gamma}_u\Big( t, x, y \Big) =& \hat{\bE}\bigg[ Q^b\Big( t, \hat{X}^{\oSlash}[t], \tfrac{1}{\kappa}\sum_{v\in N_{\oSlash}} \delta_{\hat{X}^v[t]} \Big) \bigg| \big( \hat{X}^{\oSlash}[t], \hat{X}^u[t] \big) = \big( x[t], y[t] \big) \bigg]
    		\\
    		=& \int_{\cG_{\ast}^{(\overline{V}, \overline{E})}(\cC_t^d)} Q^b\Big( t, z^{\oSlash}[t], \tfrac{1}{\kappa} \sum_{v\in N_{\oSlash}} \delta_{z^v[t]} \Big) d\cL\Big( ( X^v )_{v\in \overline{V}} \Big| (X^{\oSlash},X^u) = (x, y) \Big) \Big( (z^v)_{v\in \overline{V}} \Big)
    		\\
    		=& \Gamma\big[ Q^b(t, \cdot) \big]\big[ \mu_{x, y} \big]. 
    	\end{align*}
    	Therefore, we conclude from Theorem \ref{theorem:Exist+Uniq-local} that we can take the subgraph associated to the unit ball $B_1(\oSlash)$ of any weak solution to Equation \ref{eq:Interacting_Equation-E} and obtain that for Lebesgue almost every $t\in [0,T]$, the random variables
    	\begin{align*}
    		\int_0^t& K(t, s) \Gamma\Big[ Q^b(s, \cdot) \Big] \Big[ \hat{\bP}\big[ \cdot \big| (\hat{X}^{\oSlash}, \hat{X}^u) = (\hat{X}^u, \hat{X}^{\oSlash}) \big] \circ \big( (\hat{X}^v)_{v \in \overline{V}} \big) \Big] ds
    		\\
    		&=\int_0^t K(t, s) \tilde{Q}_u\Big( t, \big( \hat{X}^v[t]\big)_{v \in \overline{V}} \Big) ds
    		\quad \mbox{are $\hat{\bP}$-almost surely equal. }
    	\end{align*}
    	As such we conclude that
    	\begin{equation*}
    		\Big( (\Omega, \cF, \bP), \big( \overline{V}, \overline{E}, \overline{\oSlash}, (X_0^v, Z^v, X^v)_{v \in \overline{V}} \big) \Big)
    	\end{equation*}
    	is a weak solution to \eqref{eq:localEquation}. As, by Theorem \ref{theorem:Exist+Uniq-local}, a weak solution to Equation \ref{eq:Interacting_Equation-E} necessarily exists, we conclude that a weak solution to Equation \eqref{eq:localEquation} must also necessarily exist. 
    \end{proof}

	\subsection{Uniqueness in law of the local equation}
	
    We have shown that the local equation (a singular McKean-Vlasov equation) has a weak solution with the same law as the law of some local marginal of a system of locally interacting equations. 
    
    Further, as the system of locally interacting equations has a unique weak solution, the question arises: ``does the associated local equation have a weak unique solution''. In the original work \cite{lacker2020marginal} this was only proved when the drift term was uniformly bounded (which limits the applicability). Here, we develop the ideas of this proof to prove uniqueness in law of the local equation. To do this, we need to verify a transport inequality (which serves a similar purpose to the transport inequality in \cite{lacker2022Hierarchies}) which follows from establishing a Talagrand inequality. 
	 
    Let $b_0:[0,T] \times \cC_T^d \to \bR^d$ and $b:[0,T] \times \cC_T^d \times \cP_2(\cC_T^d) \to \bR^d$ be progressively measurable and let $(V, E, \oSlash)$ be the $\kappa$-regular rooted tree. Consider the system of locally interacting equations indexed by $(V, E, \oSlash)$
	\begin{equation}
	 	\label{eq:Interacting_Equation-U} 
	 	\left.
	 	\begin{aligned}
			&dX_t^u = b_0\Big(t, X^u[t] \Big) + b\Big( t, X^u[t], \mu^u\big( X[t] \big) \Big) dt + dZ_t^u
			\\
			&\mu^u\big( X[t] \big) = \frac{1}{|N_u|} \sum_{v\in N_u} \delta_{X^v[t]}
		\end{aligned}
		\right\} \quad u\in V. 
	\end{equation}
	Notice that the dependency on the local interactions now takes the very specific form. As we shall see below, this will be necessary for the entropy techniques that we use. 

	Following on from Theorem \ref{theorem:LocalEquation}, the associated local equation for the unit ball of the dynamics \eqref{eq:Interacting_Equation-U} should take the form
	\begin{equation}
		\label{eq:Local-Equation-U}
		\left.
        \begin{aligned}
            &X_t^\oSlash = X_0^{\oSlash} + \int_0^t b_0\big(s, X^{\oSlash}[s] \big) + b\Big( s, X^\oSlash[s], \sum_{v\in N_{\oSlash}} \delta_{X^v[s]} \Big) ds + Z_t^\oSlash
            \\
            u \in N_{\oSlash},\quad &X_t^{u} = X_0^u + \int_0^t b_0\big(s, X^u[s] \big) ds + \int_0^t K(t, s) \Gamma\big[ Q^b( s, \cdot) \big]\big[ \mu_{X^u, X^{\oSlash}} \big] ds + Z_t^u
            \\
            &\mu_{x,y} = \bP\big[ \cdot \big| (X^\oSlash, X^u) = (x, y) \big] \circ \Big( \overline{V}, \overline{E}, \overline{\oSlash}, (X^u)_{u \in \overline{V}} \Big)^{-1}
        \end{aligned}
        \right\}
    \end{equation}
    Given that we consider processes of the form described in Equation \eqref{eq:Interacting_Equation-U} (rather than Equation \eqref{eq:Interacting_Equation-E}), we need a new set of assumptions to work with:
	\begin{assumption}
	 	\label{assumption:localU}
		Let $\kappa\geq 2$. Let $(V, E, \oSlash)$ be the $\kappa$-regular rooted tree and let $M \in L^2\big( [0,T]; \bR \big)$. Suppose that
		\begin{enumerate}
			\item Let $K: [0,T] \to L^2\big( [0,T]; \lin(\bR^d, \bR^d) \big)$ be a Volterra kernel that satisfies Assumption \ref{assumption:VolterraK};
            \item Let $\lambda_0 \in \cP(\bR^d)$ and suppose that there exists $\delta>0$ such that
            \begin{equation*}
                \int_{\bR^d} \exp\big( \delta \cdot |x|^2 \big) \lambda_0(dx) < \infty; 
            \end{equation*}
            \item Let $b_0: [0,T] \times \cC_T^d \to \bR^d$ be progressively measurable and for every $X \in \cC_T^d$,
            \begin{align*}
                \forall t\in [0, T], \quad &\Big| Q^{b_0}\big(t, X[t] \big) ds \Big| \leq M_t \cdot \Big( 1 + \| X \big\|_{\infty, t} \Big) < \infty;
            \end{align*}
			\item Let $b:[0,T] \times \cC_T^d \times \cP_1(\cC_T^d) \to \bR^d$ be progressively measurable and for every $X \in \cC_T^d$ and $(\mu, \nu) \in \cP(\cC_T^d)$, 
			\begin{align}
                \nonumber
                &\Big| Q^{b}\big(t, X[t], \mu[t] \big) \Big| \leq M_t \cdot \Big( 1 + \| X\|_{\infty, t} + \bW_{\infty, t}^{(1)}\big[ \mu, \delta_0 \big] \Big) \quad \mbox{and}\quad
                \\
                \label{eq:assumption:localU-Lip}
                &\Big| Q^b\big( t, X[t], \mu[t] \big) - Q^b\big(t, X[t], \nu[t] \big) \Big| \leq M_t \cdot \bW_{\infty, t}^{(1)}\big[ \mu, \nu \big]; 
			\end{align}
		\end{enumerate}
	\end{assumption}

    \subsubsection*{Talagrand inequalities for locally interacting processes}
	We address whether the solution law to Equation \eqref{eq:Interacting_Equation-U} and \eqref{eq:Local-Equation-U} are contained in $\cT_d^1(\cC_T^d )$:
	\begin{proposition}
		\label{proposition:Talagrand2}
		Let $(V, E, \oSlash)$ be the $\kappa$-regular rooted graph and let $(\overline{V}, \overline{E}, \overline{\oSlash})$ be the $B_1(\oSlash)$-subgraph of $(V, E, \oSlash)$. 
  
        Let $\lambda_0 \in \cP_2(\bR^d)$, let $K:[0,T] \to L^2\big( [0,T]; \lin(\bR^d, \bR^d) \big)$, let $b_0:[0,T] \times \cC_T^d \to \bR^d$ and let $b:[0,T] \times \cC_T^d \times \cC_T^d \to \bR^d$. Suppose that Assumption \ref{assumption:localU} is satisfied. 
        
        Then for any $u\in N_{\oSlash}$ and any $(x, y) \in \cC_T^d \times \cC_T^d$, 
		\begin{equation}
			\label{eq:proposition:Talagrand2}
			\mu_{x, y} := \bP\Big[ \cdot \Big| (X^{\oSlash}, X^u)=(x, y) \Big] \circ \Big( (X^v)_{v\in \overline{V}} \Big)^{-1} \in \cT_{\infty}^1\Big( \cG_{\ast}^{(\overline{V}, \overline{E})}(\cC_T^d) \Big). 
		\end{equation} 
	\end{proposition}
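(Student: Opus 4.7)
The plan is to verify the Djellout-Guillin-Wu exponential integrability criterion recalled in Equation \eqref{eq:theorem:Djellout}, namely to exhibit $\delta > 0$ with
\begin{equation*}
\int\!\!\int \exp\Big(\delta \cdot d(\xi, \tilde\xi)^2\Big) d\mu_{x, y}(\xi) d\mu_{x, y}(\tilde\xi) < \infty,
\end{equation*}
where $d$ denotes the $\ell^\infty$ product of the $\|\cdot\|_{\infty, T}$ distances on the coordinates of $\cG_\ast^{(\overline{V}, \overline{E})}(\cC_T^d)$. Under $\mu_{x, y}$ the coordinates $X^\oSlash$ and $X^u$ are almost surely equal to $x$ and $y$, so $d(\xi, \tilde\xi)^2$ collapses to $\max_{v \in N_\oSlash \setminus \{u\}} \|X^v - \tilde X^v\|_{\infty, T}^2$. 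Since $|N_\oSlash \setminus \{u\}| = \kappa - 1$ is finite, it suffices to establish the claim for a single coordinate $v \in N_\oSlash \setminus \{u\}$.

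For such a $v$, Theorem \ref{theorem:LocalEquation} represents $X^v$ under $\mu_{x, y}$ as the weak solution of
\begin{equation*}
X^v_t = X^v_0 + \int_0^t b_0\big(s, X^v[s]\big) ds + \int_0^t K(t, s) \gamma^v\big(s, X^v[s], x[s]\big) ds + Z^v_t,
\end{equation*}
with $Z^v$ a Gaussian Volterra process and $\gamma^v(\cdot, \cdot, x)$ a deterministic function once $x$ is fixed. The pointwise bounds on $Q^{b_0}$ and $Q^b$ in Assumption \ref{assumption:localU}, together with the uniform $L^2$ moment bound from Theorem \ref{theorem:Exist+Uniq-local} applied inside the conditional expectation defining $\gamma^v$, yield a growth estimate of the form
\begin{equation*}
\Big|Q^{b_0}\big(s, X^v[s]\big)\Big| + \Big|\gamma^v\big(s, X^v[s], x[s]\big)\Big| \leq \widetilde M_s\Big(1 + \|X^v\|_{\infty, s}\Big)
\end{equation*}
for some $\widetilde M \in L^2([0, T]; \bR)$ whose $L^2$-norm depends on $x$. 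This places $X^v$ under $\mu_{x, y}$ precisely in the framework of Proposition \ref{proposition:TalagrandIQ}\ref{enum:proposition:TalagrandIQ-2}, the proof of which combines a Gronwall bound on $\|X^v - \tilde X^v\|_{\infty, T}^2$ via the $\RKHS_T$-norm of the drift difference, Fernique's theorem for the Gaussian Volterra noise, and the exponential initial-moment assumption on $\lambda_0$ supplied by Assumption \ref{assumption:localU}.

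The main obstacle is running this argument rigorously under the conditional law $\mu_{x, y}$ rather than under the unconditional product law. One must verify that under $\mu_{x, y}$ the noise $Z^v$ retains sufficient Fernique moments and that the initial datum $X^v_0$ retains the exponential moment from Assumption \ref{assumption:localU}. The 2-Markov Random Field property of Theorem \ref{theorem:MRF} together with the rooted-tree symmetry exploited in Proposition \ref{prop:SymmetryLaw} localise the effect of conditioning on $(X^\oSlash, X^u)$ to the pair $\{\oSlash, u\}$ and their common boundary, so for $v \in N_\oSlash \setminus \{u\}$ the conditioning perturbs $(X^v_0, Z^v)$ only through a Radon-Nikodym factor that can be controlled via the entropy estimates of Proposition \ref{fBmEntropy}. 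Making this decoupling quantitative constitutes the technical heart of the proof.
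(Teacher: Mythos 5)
Your overall target is right --- both you and the paper reduce the claim to the exponential square-integrability criterion of \cite{Djellout2004Transportation} recalled in Equation \eqref{eq:theorem:Djellout} --- but your argument stops exactly where the real work begins. You yourself write that one must verify that under $\mu_{x,y}$ the noise $Z^v$ retains Fernique-type moments and the initial datum retains its exponential moment, and that ``making this decoupling quantitative constitutes the technical heart of the proof.'' That heart is missing: nothing in the proposal actually bounds $\int\!\!\int \exp\big(\delta\, d(\xi,\tilde\xi)^2\big)\, d\mu_{x,y}(\xi)\, d\mu_{x,y}(\tilde\xi)$, and the route you gesture at (controlling the conditional law of $(X_0^v, Z^v)$ through a Radon--Nikodym factor estimated via Proposition \ref{fBmEntropy}) is not obviously workable, since an entropy bound on a conditional law does not by itself transfer exponential moments. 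The same gap reappears in your growth estimate for $\gamma^v$: you invoke ``the uniform $L^2$ moment bound from Theorem \ref{theorem:Exist+Uniq-local} applied inside the conditional expectation,'' but that theorem provides unconditional moments, and the conditional second moments of the children given $(X^{\oSlash}, X^u)=(x,y)$ are precisely what has not been controlled.

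The paper sidesteps conditioning entirely. It proves the \emph{unconditional} bound $\sup_{u\in V}\bE\big[\exp\big(\delta \sum_{v\in B_1(u)} \|X^v\|_{\infty,T}^2\big)\big]<\infty$ directly from Assumption \ref{assumption:localU}: one writes the pathwise inequality for $\sum_{v\in B_1(\oSlash)}\|X^v\|_{\infty,t}^2$ in terms of the initial data, the noises and the $M$-weighted sup-norms over neighbouring balls, exponentiates, applies Jensen's inequality on a short interval $[0,t']$ on which $\int |M_s|^2\,ds$ is small, takes expectations and a supremum over vertices, applies Gr\"onwall, and iterates over intervals of length $t'$ to reach $T$. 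The conditional statement then follows for free: a nonnegative random variable with finite expectation has an almost surely finite conditional expectation, which is exactly the single-exponential-moment form of \eqref{eq:theorem:Djellout} for $\mu_{x,y}$, holding $\cL^{\oSlash,u}$-almost surely. If you want to keep your per-coordinate strategy, you should replace the conditional-law analysis by this unconditional estimate followed by the tower property.
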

	
	\begin{proof}
        We follow the same techniques as those of Proposition \ref{proposition:TalagrandIQ}. For $u \in N_{\oSlash}$ and for every $(x, y) \in \cC_T^d \times \cC_T^d$, we denote
        \begin{align*}
            \mu_{x,y}^u := \bP\Big[ \cdot \Big| (X^{\oSlash}, X^u)=(x, y) \Big] \circ \Big( (X^v)_{v\in \overline{V}} \Big)^{-1}
            \quad \mbox{and}\quad
            \cL^{\oSlash, u}:= \bP \circ \Big( X^\oSlash, X^u\Big)^{-1}. 
        \end{align*}
        Our goal is to verify that the measure $\mu_{x, y}$ satisfies Equation \eqref{eq:theorem:Djellout}, that is there exists $\delta>0$ such that
		\begin{equation*}
			\int_{\cG_{\ast}^{(\overline{V}, \overline{E})}(\cC_T^d)} \exp\Big( \delta \cdot \sum_{v\in \overline{V}} \big\| Z^v \big\|_{\infty, T}^2 \Big) d\mu_{x, y}^u\Big( (Z^v)_{v \in \overline{V}} \Big) < \infty \quad \cL^{\oSlash, u}\mbox{-almost surely. }
		\end{equation*}
        First, we start by remarking that thanks to Assumption \ref{assumption:localU} we have that
        \begin{align*}
            \sum_{v \in B_1(\oSlash)} \big\| X^v \big\|_{\infty, t}^2 \leq& 4 \Bigg( \sum_{v \in B_1(\oSlash)} \big| X_0^v \big|^2 + \sum_{v\in B_1(\oSlash)} \big\| Z^v \big\|_{\infty, t}^2 
            \\
            &+ \sum_{v\in B_1(\oSlash)} \bigg\| \int_0^\cdot b_0\big( s, X^v[s] \big) ds \bigg\|_{\RKHS_t}^2 + \sum_{v \in B_1(\oSlash)} \bigg\| \int_0^\cdot b\Big( s, X^v[s], \mu^v\big(X[s]) \Big) ds \bigg\|_{\RKHS_t}^2 \Bigg)
            \\
            \leq& 4 \Bigg( \sum_{v \in B_1(\oSlash)} \big| X_0^v \big|^2 + \sum_{v\in B_1(\oSlash)} \big\| Z^v \big\|_{\infty, t}^2 + 2(\kappa+1) \int_0^t \big| M_s\big|^2 ds
            \\
            &+ \int_0^t \big| M_s\big|^2 \cdot \bigg( \sum_{v\in B_1(\oSlash)} \big\| X^v \|_{\infty, s}^2  + \sum_{v \in B_1(\oSlash) } \Big( \tfrac{1}{\kappa} \sum_{w\in N_{v}} \big\| X^w \|_{\infty, s}^2 \Big) \bigg) ds \Bigg). 
        \end{align*}
        Therefore, for a choice of $\delta>0$ we have that
        \begin{align*}
            \exp\bigg( \delta& \cdot \sum_{v \in B_1(\oSlash)} \big\| X^v \big\|_{\infty, t}^2 \bigg) 
            \\
            \leq& \frac{\exp\bigg( \delta \cdot 20 \cdot \sum_{v \in B_1(\oSlash)} \big| X_0^v \big|^2 \bigg)}{5} + \frac{\exp\bigg( \delta \cdot 20 \cdot \sum_{v\in B_1(\oSlash)} \big\| Z^v \big\|_{\infty, t}^2 \bigg)}{5} 
            \\
            &+ \frac{\exp\bigg( \delta\cdot20\cdot \int_0^t \big| M_s\big|^2 ds\bigg)}{5}
            \\
            &+ \frac{\exp\bigg( 20 \cdot \int_0^t \big| M_s\big|^2 \cdot \delta \cdot\sum_{v\in B_1(\oSlash)} \big\| X^v \|_{\infty, s}^2 ds \bigg)}{5}
            \\
            &+ \frac{\exp\bigg( 20 \cdot \int_0^t \big| M_s \big|^2 \cdot \tfrac{1}{\kappa} \sum_{v \in B_1(\oSlash) } \left(  \delta \cdot \sum_{w\in B_1(v)} \big\| X^w \|_{\infty, s}^2  \right) ds \bigg)}{5}. 
        \end{align*}
        By defining
        \begin{equation*}
            t' = \inf\bigg\{ t>0: \quad \sup_{s\in [0,T]} \int_s^{s+t} 20 \cdot | M_s |^2 ds = 1 \bigg\}, 
        \end{equation*}
        we apply Jensen's inequality to obtain that for any $t\in [0,t']$
        \begin{align*}
            \exp\bigg( \delta& \cdot \sum_{v \in B_1(\oSlash)} \big\| X^v \big\|_{\infty, t}^2 \bigg) 
            \\
            \leq& \frac{\exp\bigg( \delta \cdot 20 \cdot \sum_{v \in B_1(\oSlash)} \big| X_0^v \big|^2 \bigg)}{5} + \frac{\exp\bigg( \delta \cdot 20 \cdot \sum_{v\in B_1(\oSlash)} \big\| Z^v \big\|_{\infty, t}^2 \bigg)}{5} 
            \\
            &+ \frac{\exp\bigg( \delta\cdot20\cdot \int_0^t \big| M_s\big|^2 ds\bigg)}{5}
            \\
            &+ 4 \cdot \int_0^t \big| M_s\big|^2 \cdot \exp\bigg( \delta \cdot\sum_{v\in B_1(\oSlash)} \big\| X^v \|_{\infty, s}^2 \bigg)ds 
            \\
            &+ 4 \cdot \int_0^t \big| M_s \big|^2 \cdot \tfrac{1}{\kappa} \sum_{v \in B_1(\oSlash) } \exp\bigg( \delta \cdot \sum_{w\in B_1(v)} \big\| X^w \|_{\infty, s}^2 \bigg) ds. 
        \end{align*}
        Choosing $\delta$ small enough, we take expectations followed by supremum over $V$ to conclude that
        \begin{align*}
            \sup_{u \in V} \bE\bigg[& \exp\Big( \delta \cdot \sum_{v \in B_1(u)} \big\| X^v \big\|_{\infty, t}^2 \Big) \bigg]
            \\
            \leq& \sup_{u\in V} \tfrac{1}{5} \bE\bigg[ \exp\Big( \delta \cdot 20 \cdot \sum_{v \in B_1(u)} \big| X_0^v \big|^2 \Big) \bigg] + \sup_{u \in V} \tfrac{1}{5} \bE\bigg[ \exp\Big( \delta \cdot 20 \cdot \sum_{v\in B_1(\oSlash)} \big\| Z^v \big\|_{\infty, t'}^2 \Big)\bigg]
            \\
            &+ \tfrac{1}{5} \exp\Big( \delta\cdot20\cdot \int_0^{t'} \big| M_s\big|^2 ds \Big)
            \\
            &+ 8 \cdot \int_0^t \big| M_s\big|^2 \cdot \sup_{u\in V} \bE\bigg[ \exp\Big( \delta \cdot\sum_{v\in B_1(u)} \big\| X^v \|_{\infty, s}^2 \Big) \bigg] ds. 
        \end{align*}
        Applying Gr\"onwall's inequality, we conclude that for $t\in [0, t']$ and $\delta$ small enough
        \begin{align*}
            \sup_{u \in V} \bE\Bigg[& \exp\bigg( \delta \cdot \sum_{v \in B_1(u)} \big\| X^v \big\|_{\infty, t}^2 \bigg) \Bigg]
            \\
            \leq& \tfrac{1}{5} \Bigg( \sup_{u\in V} \bE\bigg[ \exp\Big( \delta \cdot 20 \cdot \sum_{v \in B_1(u)} \big| X_0^v \big|^2 \Big) \bigg] + \sup_{u \in V} \bE\bigg[ \exp\Big( \delta \cdot 20 \cdot \sum_{v\in B_1(\oSlash)} \big\| Z^v \big\|_{\infty, t'}^2 \Big)\bigg]
            \\
            &+ \exp\Big( \delta\cdot20\cdot \int_0^{t'} \big| M_s\big|^2 ds\Big) \Bigg) \cdot \exp\Big( 8 \cdot \int_0^t \big| M_s\big|^2 \Big)< \infty. 
        \end{align*}

        Now, suppose that for some $t\in [0,T]$ and $\delta>0$ that
        \begin{equation*}
            \sup_{u \in V} \bE\Bigg[ \exp\bigg( \delta \cdot \sum_{v \in B_1(u)} \big\| X^v \big\|_{\infty, t}^2 \bigg) \Bigg] < \infty
        \end{equation*}
        and consider the random variable $\sum_{v \in B_1(u)} \big\| X^v \big\|_{\infty, t+t'}^2$. Arguing as before, we conclude that
        \begin{align*}
            \exp\bigg( \delta& \cdot \sum_{v \in B_1(\oSlash)} \big\| X^v \big\|_{\infty, t+t'}^2 \bigg) 
            \\
            \leq& \frac{1}{5} \exp\bigg( \delta \cdot 20 \cdot \sum_{v \in B_1(\oSlash)} \big\| X^v \big\|_{\infty, t}^2 \bigg) + \frac{1}{5} \exp\bigg( \delta \cdot 20 \cdot \sum_{v\in B_1(\oSlash)} \big\| Z^v \big\|_{\infty, T}^2 \bigg)
            \\
            &+ \frac{1}{5} \exp\bigg( \delta\cdot20\cdot \int_{t}^{t+t'} \big| M_s\big|^2 ds\bigg)
            \\
            &+ 4 \cdot \int_{t}^{t+t'} \big| M_s\big|^2 \cdot \exp\bigg( \delta \cdot\sum_{v\in B_1(\oSlash)} \big\| X^v \|_{\infty, s}^2 \bigg) ds 
            \\
            &+ 4 \cdot \int_t^{t+t'} \big| M_s \big|^2 \cdot \tfrac{1}{\kappa} \sum_{v \in B_1(\oSlash) } \exp\bigg( \delta \cdot \sum_{w\in B_1(v)} \big\| X^w \|_{\infty, s}^2 \bigg) ds. 
        \end{align*}
        Taking expectations and a supremum over $V$, 
        \begin{align*}
            &\sup_{u \in V} \bE\bigg[ \exp\Big( \delta \cdot \sum_{v \in B_1(u)} \big\| X^v \big\|_{\infty, t+t'}^2 \Big) \bigg]
            \\
            &\leq \frac{1}{5}\bigg( \sup_{u\in V} \bE\bigg[ \exp\Big( \delta \cdot 20 \cdot \sum_{v \in B_1(u)} \big\| X^v \big\|_{\infty, t}^2 \Big) \bigg] + \sup_{u \in V} \bE\bigg[ \exp\Big( \delta \cdot 20 \cdot \sum_{v\in B_1(\oSlash)} \big\| Z^v \big\|_{\infty, T}^2 \Big) \bigg] +  \exp\Big( \delta \Big) \bigg)
            \\
            &\quad + 8 \cdot \int_{t}^{t+t'} \big| M_s\big|^2 \cdot \sup_{u\in V} \bE\bigg[ \exp\Big( \delta \cdot\sum_{v\in B_1(u)} \big\| X^v \|_{\infty, s}^2 \Big) \bigg] ds
        \end{align*}
        so that applying Gr\"onwall's inequality, we conclude that for $\delta$ chosen small enough, 
        \begin{equation*}
            \sup_{u \in V} \bE\Bigg[ \exp\bigg( \delta \cdot \sum_{v \in B_1(u)} \big\| X^v \big\|_{\infty, t+t'}^2 \bigg) \Bigg] < \infty. 
        \end{equation*}
        As such, we conclude that there exists $\delta>0$ such that
        \begin{equation*}
            \int_{\cG_{\ast}^{(\overline{V}, \overline{E})}(\cC_T^d)} \exp\Big( \delta \cdot \sum_{v \in \overline{V}} \big\| Z \big\|_{\infty, T}^2 \Big) d\mu_{x, y}^u\Big( (Z^v)_{v\in \overline{V}} \Big) < \infty \quad \cL^{\oSlash, u}\mbox{-almost surely, }
        \end{equation*}
        and we conclude. 
	\end{proof}
	
	\subsubsection*{Uniqueness of the local equation}
	
	This next theorem implies the uniqueness result from Theorem \ref{theorem:localEquation-fbm}:
	\begin{theorem}
        \label{theorem:Local-Uniq}
        Let $(V, E, \oSlash)$ be the $\kappa$-regular rooted tree, let $(\overline{V}, \overline{E}, \overline{\oSlash})$ be the $B_1(\oSlash)$-subgraph of $(V, E, \oSlash)$. 
        
        Let $\lambda_0 \in \cP_2(\bR^d)$, let $K:[0,T] \to L^2\big( [0,T]; \lin(\bR^d,\bR^d) \big)$, let $b_0:[0,T] \times \cC_T^d \to \bR^d$ and let $b:[0,T] \times \cC_T^d \times \cC_T^d\to \bR^d$. 
        
        Suppose that Assumption \ref{assumption:localU} is satisfied. Then there exists a \emph{unique} weak solution
        \begin{equation*}
            \Big( (\Omega, \cF, \bP), \big( \overline{V}, \overline{E}, \overline{\oSlash}, (X_0^u, Z^u, X^u)_{u\in \overline{V}} \big) \Big)
        \end{equation*}
        to Equation \eqref{eq:Local-Equation-U}. 
	\end{theorem}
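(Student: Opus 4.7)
The plan is to establish weak uniqueness by showing the relative entropy between any two weak solutions of \eqref{eq:Local-Equation-U} vanishes, via a Gronwall argument combining the entropy formula of Proposition \ref{fBmEntropy}, the transport inequality of Proposition \ref{proposition:Talagrand2}, and the Lipschitz-in-measure assumption \eqref{eq:assumption:localU-Lip}.

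Let $P^1, P^2 \in \cP\big(\cG_\ast^{(\overline{V},\overline{E})}(\cC_T^d)\big)$ be two weak solutions and let $\mu^i_{x,y}$ denote the corresponding conditional laws appearing in \eqref{eq:Local-Equation-U}. First I would observe that the drift for $X^\oSlash$ in \eqref{eq:Local-Equation-U} is a deterministic functional of the joint path $(X^v)_{v\in \overline{V}}$, so it coincides under $P^1$ and $P^2$; it is only for each $u \in N_\oSlash$ that the drift--namely $b_0(s, X^u[s])$ plus a Volterra convolution of $\gamma^{u,i}(s,x,y) := \Gamma[Q^b(s,\cdot)][\mu^i_{x,y}]$--depends on the solution law. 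Applying Proposition \ref{fBmEntropy} to the joint process on $\cG_\ast^{(\overline{V},\overline{E})}(\cC_T^d)$ and using Equation \eqref{eq':lemma:Existence_Q} to express the RKHS norm of the drift difference in terms of the $Q$-functionals, the common $b_0$ contributions cancel and the vanishing of the initial entropy (both solutions have initial law $\lambda_0^{\otimes \overline{V}}$) gives
\begin{equation*}
    \bH\big[ P^1[t] \big| P^2[t] \big] = \tfrac{1}{2} \sum_{u \in N_\oSlash} \int_0^t \bE^{P^1}\Big[ \big| \gamma^{u,1}(s, X^u, X^\oSlash) - \gamma^{u,2}(s, X^u, X^\oSlash) \big|^2 \Big] ds.
\end{equation*}

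Next I would control the integrand. By \eqref{eq:assumption:localU-Lip}, the functional $(z^v)_{v\in\overline{V}} \mapsto Q^b\big(s, z^\oSlash[s], \tfrac{1}{\kappa}\sum_{v \in N_\oSlash} \delta_{z^v[s]}\big)$ is Lipschitz with constant of order $M_s/\kappa$ for the sum-of-sup-norms metric on $\cG_\ast^{(\overline{V},\overline{E})}(\cC_T^d)$, so Kantorovich--Rubinstein duality yields
\begin{equation*}
    \big| \gamma^{u,1}(s,x,y) - \gamma^{u,2}(s,x,y) \big| \leq \tfrac{M_s}{\kappa} \cdot \bW_{\infty,s}^{(1)}\big[ \mu^1_{x,y}, \mu^2_{x,y} \big].
\end{equation*}
Proposition \ref{proposition:Talagrand2} then upgrades this to $\bW_{\infty,s}^{(1)}[\mu^1_{x,y}, \mu^2_{x,y}]^2 \leq C \cdot \bH[\mu^1_{x,y} | \mu^2_{x,y}]$, and the chain rule for relative entropy bounds $\bE^{P^1}[\bH[\mu^1_{X^u, X^\oSlash} | \mu^2_{X^u, X^\oSlash}]]$ by $\bH[P^1[s] | P^2[s]]$. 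Assembling these pieces produces the integral inequality $\bH[P^1[t]|P^2[t]] \leq \tfrac{C}{2\kappa} \int_0^t M_s^2 \cdot \bH[P^1[s]|P^2[s]] ds$, and since $M \in L^2([0,T];\bR)$ Gronwall's lemma forces $\bH[P^1[t]|P^2[t]] \equiv 0$, hence $P^1 = P^2$.

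I expect the main obstacle to be securing uniformity of the Talagrand constant from Proposition \ref{proposition:Talagrand2} in the conditioning variables $(x,y)$, which requires carefully tracking the dependence of the exponential integrability parameter $\delta$ on $(x,y)$ through the iterative Gronwall scheme used to prove that proposition; the constants appearing in the proof depend on $(x,y)$ only through quantities that are finite $\cL^{\oSlash, u}$-almost surely, but the final rate in Gronwall needs to be integrable rather than merely almost surely finite. A secondary technical point is rigorously justifying the application of Proposition \ref{fBmEntropy} in this conditional McKean--Vlasov setting, where the drift is defined implicitly through a self-consistent conditional expectation depending on the unknown law $P^i$ itself.
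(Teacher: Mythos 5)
Your proposal follows essentially the same route as the paper's proof: apply the entropy identity of Proposition \ref{fBmEntropy} (only the neighbour drifts contribute, since the drift of $X^{\oSlash}$ is a fixed functional of the joint path), bound the difference of the conditional drifts via the Lipschitz-in-measure condition \eqref{eq:assumption:localU-Lip} and Kantorovich duality, upgrade the Wasserstein distance to relative entropy using Proposition \ref{proposition:Talagrand2}, control the conditional entropies by the joint entropy via the chain rule, and conclude with Gr\"onwall. The technical caveats you flag (uniformity of the Talagrand constant in the conditioning variables and the applicability of the Girsanov-type entropy formula to the conditional McKean--Vlasov drift) are legitimate points that the paper's own proof treats only implicitly, but they do not indicate a divergence in method.
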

 
	\begin{remark}
	 	We emphasise that while we prove these results for a class of locally interacting processes driven by additive Gaussian Volterra processes (motivated by the choice of fractional Brownian motion), these results are new even when the additive Gaussian process is taken to be Brownian motion and a generalisation of the results found in \cite{lacker2020marginal}. 
	\end{remark}
 
	\begin{proof}
		Let us start by supposing that there exist two weak solutions to Equation \eqref{eq:Local-Equation-U}
        \begin{equation*}
            \big( \Omega_1, \cF_1, \bP_1, \overline{V}, \overline{E}, \overline{\oSlash}, (X_0^{u}, Z^{u}, X^{u,1})_{u \in \overline{V}} \big)
            \quad \mbox{and}\quad
            \big( \Omega_2, \cF_2, \bP_2, \overline{V}, \overline{E}, \overline{\oSlash}, (X_0^{u}, Z^{u}, X^{u,2})_{u \in \overline{V}} \big)
        \end{equation*}
        and denote
        \begin{equation*}
            \left.
            \begin{aligned}
                \cL_1^{X}[t] =& \bP_1 \circ \Big( \overline{V}, \overline{E}, \overline{\oSlash}, (X^{u,1}[t])_{u\in \overline{V}} \Big)^{-1}, 
                \\
                \cL_2^X[t] =& \bP_2\circ \Big( \overline{V}, \overline{E}, \overline{\oSlash}, (X^{u, 2}[t])_{u \in \overline{V}} \Big)^{-1}
            \end{aligned}
            \right\}
            \in 
            \cP_2\Big( \cG_{\ast}^{(\overline{V}, \overline{E})}(\cC_T^d) \Big). 
        \end{equation*}
        Therefore, for every $v \in N_{\oSlash}$ we also denote the measurable mapping
        \begin{equation*}
            \cC_T^d \times \cC_T^d \ni (x, y) \mapsto 
            \left.
            \begin{aligned}
                \mu_{x, y}^{1}:=& \bP_1\big[ \cdot \big| (X^{\oSlash,1}, X^{u,1}) = (x, y) \big]\circ \Big( \big( \overline{V}, \overline{E}, \overline{\oSlash}, (X^{v,1})_{v\in \overline{V}} \big) \Big)^{-1}
                \\
                \mu_{x, y}^{2}:=& \bP_2\big[ \cdot \big| (X^{\oSlash,2}, X^{u,2}) = (x, y) \big]\circ \Big( \big( \overline{V}, \overline{E}, \overline{\oSlash}, (X^{v,2})_{v\in \overline{V}} \big) \Big)^{-1}
            \end{aligned}
            \right\} \in \cP_2(\cC_T^d)
        \end{equation*}
        and
        \begin{equation*}
            \gamma_1(t, x, y)= \Gamma\big[ Q^b(t, \cdot) \big]\big[ \mu_{x, y}^{1} \big]
            \quad\mbox{and}\quad
            \gamma_2(t, x, y)= \Gamma\big[ Q^b(t, \cdot) \big]\big[ \mu_{x, y}^{2} \big]. 
        \end{equation*}
        Next, we apply Proposition \ref{fBmEntropy} to conclude that for any $u \in N_{\oSlash}$, 
		\begin{align*}
			\bH\Big[ \cL_1^{X}[t] \Big| \cL_2^{X}[t] \Big] =& \frac{1}{2} \bE^{\bP_1}\bigg[ \int_0^t \sum_{v\in N_{\oSlash}} \Big| \gamma_1\big( s, X^v[s], X^{\oSlash}[s] \big) - \gamma_2\big( s, X^v[s], X^{\oSlash}[s] \big) \Big|^2 ds \bigg]
			\\
			=& \frac{\kappa}{2} \bE^{\bP_1}\bigg[ \int_0^t \Big| \gamma_1\big( s, X^u[s], X^{\oSlash}[s] \big) - \gamma_2\big( s, X^{u}[s], X^{\oSlash}[s] \big) \Big|^2 ds \bigg]
			\\
			=& \frac{\kappa}{2} \bE^{\bP_1}\bigg[ \int_0^t \Big| \Gamma\big[ Q^b(t, \cdot) \big]\big[ \mu_{X^u, X^{\oSlash}}^{1} \big] - \Gamma\big[ Q^b(t, \cdot) \big]\big[ \mu_{X^u, X^{\oSlash}}^{2} \big] \Big|^2 ds \bigg]
            \\
            =& \frac{\kappa}{2} \bE^{\bP_1}\Bigg[ \int_0^t \bigg| \Big\langle \mu_{X^u, X^{\oSlash}}^{1} - \mu_{X^u, X^{\oSlash}}^{2}, Q^b\Big(s, X^{\oSlash}[s], \tfrac{1}{\kappa} \sum_{v\in N_{\oSlash}} \delta_{\cdot} \Big) \Big\rangle \bigg|^2 ds \Bigg]
		\end{align*}
		Using that for any choice of $(t, x)\in [0,T] \times \cC_T^d$ we have that the function 
        \begin{align*}
            &Q^b\Big( t, x[t], \sum_{v \in N_{\oSlash}} \delta_{\cdot} \Big): (\cC_T^d)^{\times \kappa} \to \bR^d \quad \mbox{is Lipschitz since by Equation \eqref{eq:assumption:localU-Lip}}
            \\
			&\bigg| Q^b\Big( t, x[t], \tfrac{1}{|N_{\oSlash}|} \sum_{v\in N_{\oSlash}} \delta_{y^v[t]} \Big) - Q^b\Big( t, x[t], \tfrac{1}{|N_{\oSlash}|} \sum_{v\in N_{\oSlash}} \delta_{z^v[t]} \Big) \bigg| \leq M_t \cdot \tfrac{1}{\kappa} \sum_{v\in N_{\oSlash}} \big\| y^v - z^v \big\|_{\infty, t}. 
		\end{align*}
		Therefore, by the Kantorovich dual representation for the Wasserstein distance, the progressive measurability of $Q^b$ and Proposition \ref{proposition:Talagrand2}, we get that
		\begin{align*}
			\frac{\kappa}{2} \bE^{\bP_1}\Bigg[& \int_0^t \bigg| \Big\langle \mu_{X^{\oSlash}, X^{u}}^1 - \mu_{X^{\oSlash}, X^{u}}^2, Q^b\Big( s, X^{\oSlash}[s], \tfrac{1}{\kappa} \sum_{u\in N_{\oSlash}} \delta_{\cdot} \Big) \Big\rangle \bigg|^2 ds \Bigg]
			\\
			\leq& \frac{1}{2} \bE^{\bP_1}\bigg[ \int_0^t |M_s|^2 \cdot \bW_{\infty, s}^{(1)} \Big[ \mu_{X^{\oSlash}, X^{u}}^1[s], \mu_{X^{\oSlash}, X^{u}}^2[s] \Big]^2 ds \bigg]
			\\
			\leq& \frac{1}{2} \int_0^t |M_s|^2 (\gamma')^2 \cdot \bE^{\bP_1}\bigg[ \bH \Big[ \mu_{X^{\oSlash}, X^{u}}^1[s]\Big| \mu_{X^{\oSlash}, X^{u}}^2[s] \Big] \bigg] ds. 
		\end{align*}
		Finally, we denote 
        \begin{equation*}
            \left.
            \begin{aligned}
                \cL_1^{\oSlash,u}= \bP_1 \circ \big( X^{\oSlash,1}, X^{u,1} \big)^{-1} 
                \\
                \cL_2^{\oSlash,u}= \bP_2 \circ \big( X^{\oSlash,2}, X^{u,2} \big)^{-1} 
            \end{aligned}
            \right\} \in \cP_2(\cC_T^d \times \cC_T^d). 
        \end{equation*}
        By the chain rule of relative entropy, we get
		\begin{align*}
			\bE^{\bP_1}\bigg[ \bH \Big[& \mu_{X^{\oSlash}, X^{u}}^1[s] \Big| \mu_{X^{\oSlash}, X^{u}}^2[s] \Big] \bigg] 
			\\
			=& \int_{(\cC_T^d)^{\times 2}} \bigg(\int_{\cG_{\ast}^{(\overline{V}, \overline{E})}(\cC_T^d)} \log\bigg( \frac{ d\mu_{x, y}^1}{d\mu_{x,y}^2}\Big( (z)_{v\in \overline{V}} \Big) \bigg) d\mu_{x, y}^1\big( (z)_{v\in \overline{V}} \big) \bigg) d\cL_1^{\oSlash,u} (x, y)
			\\
			=& \int_{\cG_{\ast}^{(\overline{V}, \overline{E})}(\cC_T^d)} \Bigg( \log\bigg( \frac{ d\cL_1^{X}}{d\cL_2^{X}}\Big( (z^v)_{v\in \overline{V}} \Big) \bigg)  
            - \log\bigg( \frac{ d\cL_1^{\oSlash,u}}{d\cL_2^{\oSlash, u}}\Big( z^{\oSlash}, z^u \Big) \bigg) \Bigg) d\cL_1^{X} \Big( (z^v)_{v\in \overline{V}} \Big)
			\\
			=& \bH\Big[ \cL_1^{X}\Big| \cL_2^X \Big] - \bH\Big[ \cL_1^{\oSlash,u} \Big| \cL_2^{\oSlash,u} \Big]. 
		\end{align*}
		Therefore, we conclude that
		\begin{align*}
			\bH\Big[ \cL_1^{X}[t] \Big| \cL_2^{X}[t] \Big] \leq \int_0^t \gamma^2 |M_s|^2 \cdot \bH\Big[ \cL_1^{X}[s] \Big| \cL_2^{X}[s] \Big] ds
		\end{align*}
		and as such 
		\begin{equation*}
			\bH\Big[ \cL_1^{X}[t] \Big| \cL_2^{X}[t] \Big] = 0. 
		\end{equation*}
	\end{proof}
    
	
	\bibliographystyle{plain}
\begin{bibdiv}
\begin{biblist}

\bib{Bhamidi2021Survival}{article}{
      author={Bhamidi, Shankar},
      author={Nam, Danny},
      author={Nguyen, Oanh},
      author={Sly, Allan},
       title={Survival and extinction of epidemics on random graphs with
  general degree},
        date={2021},
        ISSN={0091-1798,2168-894X},
     journal={Ann. Probab.},
      volume={49},
      number={1},
       pages={244\ndash 286},
         url={https://doi.org/10.1214/20-AOP1451},
      review={\MR{4203338}},
}

\bib{Beiglbock2022From}{article}{
      author={Beiglb\"{o}ck, Mathias},
      author={Pammer, Gudmund},
      author={Schachermayer, Walter},
       title={From {B}achelier to {D}upire via optimal transport},
        date={2022},
        ISSN={0949-2984,1432-1122},
     journal={Finance Stoch.},
      volume={26},
      number={1},
       pages={59\ndash 84},
         url={https://doi.org/10.1007/s00780-021-00466-3},
      review={\MR{4356258}},
}

\bib{Brunick2013Mimicking}{article}{
      author={Brunick, Gerard},
      author={Shreve, Steven},
       title={Mimicking an {I}t\^{o} process by a solution of a stochastic
  differential equation},
        date={2013},
        ISSN={1050-5164},
     journal={Ann. Appl. Probab.},
      volume={23},
      number={4},
       pages={1584\ndash 1628},
         url={https://doi.org/10.1214/12-aap881},
      review={\MR{3098443}},
}

\bib{Chatterjee2009Contact}{article}{
      author={Chatterjee, Shirshendu},
      author={Durrett, Rick},
       title={Contact processes on random graphs with power law degree
  distributions have critical value 0},
        date={2009},
        ISSN={0091-1798,2168-894X},
     journal={Ann. Probab.},
      volume={37},
      number={6},
       pages={2332\ndash 2356},
         url={https://doi.org/10.1214/09-AOP471},
      review={\MR{2573560}},
}

\bib{cheridito2003fou}{article}{
      author={Cheridito, Patrick},
      author={Kawaguchi, Hideyuki},
      author={Maejima, Makoto},
       title={Fractional {O}rnstein-{U}hlenbeck processes},
        date={2003},
        ISSN={1083-6489},
     journal={Electron. J. Probab.},
      volume={8},
       pages={no. 3, 14},
         url={https://doi.org/10.1214/EJP.v8-125},
      review={\MR{1961165}},
}

\bib{Daudin2023On}{article}{
      author={Daudin, Samuel},
      author={Delarue, Fran\c{c}ois},
      author={Jackson, Joe},
       title={On the optimal rate for the convergence problem in mean-field
  control},
        date={2023},
     journal={arXiv preprint},
      eprint={2305.08423},
}

\bib{Dereudre2003Interacting}{article}{
      author={Dereudre, David},
       title={Interacting {B}rownian particles and {G}ibbs fields on
  pathspaces},
        date={2003},
        ISSN={1292-8100,1262-3318},
     journal={ESAIM Probab. Stat.},
      volume={7},
       pages={251\ndash 277},
         url={https://doi.org/10.1051/ps:2003012},
      review={\MR{1987789}},
}

\bib{Delattre2016Note}{article}{
      author={Delattre, Sylvain},
      author={Giacomin, Giambattista},
      author={Lu\c{c}on, Eric},
       title={A note on dynamical models on random graphs and {F}okker-{P}lanck
  equations},
        date={2016},
        ISSN={0022-4715,1572-9613},
     journal={J. Stat. Phys.},
      volume={165},
      number={4},
       pages={785\ndash 798},
         url={https://doi.org/10.1007/s10955-016-1652-3},
      review={\MR{3568168}},
}

\bib{Djellout2004Transportation}{article}{
      author={Djellout, H.},
      author={Guillin, A.},
      author={Wu, L.},
       title={Transportation cost-information inequalities and applications to
  random dynamical systems and diffusions},
        date={2004},
        ISSN={0091-1798,2168-894X},
     journal={Ann. Probab.},
      volume={32},
      number={3B},
       pages={2702\ndash 2732},
         url={https://doi.org/10.1214/009117904000000531},
      review={\MR{2078555}},
}

\bib{Dellacherie1988Probabilities}{book}{
      author={Dellacherie, Claude},
      author={Meyer, Paul-Andr\'{e}},
       title={Probabilities and potential. {C}},
      series={North-Holland Mathematics Studies},
   publisher={North-Holland Publishing Co., Amsterdam},
        date={1988},
      volume={151},
        ISBN={0-444-70386-1},
        note={Potential theory for discrete and continuous semigroups,
  Translated from the French by J. Norris},
      review={\MR{939365}},
}

\bib{Decreusfond1999Stochastic}{article}{
      author={Decreusefond, L.},
      author={\"{U}st\"{u}nel, A.~S.},
       title={Stochastic analysis of the fractional {B}rownian motion},
        date={1999},
        ISSN={0926-2601},
     journal={Potential Anal.},
      volume={10},
      number={2},
       pages={177\ndash 214},
         url={https://doi.org/10.1023/A:1008634027843},
      review={\MR{1677455}},
}

\bib{Galeati2022Distribution}{article}{
      author={Galeati, Lucio},
      author={Harang, Fabian~A.},
      author={Mayorcas, Avi},
       title={Distribution dependent {SDE}s driven by additive continuous
  noise},
        date={2022},
     journal={Electron. J. Probab.},
      volume={27},
       pages={Paper No. 37, 38},
         url={https://doi.org/10.1214/22-ejp756},
      review={\MR{4388460}},
}

\bib{Galeati2023Distribution}{article}{
      author={Galeati, Lucio},
      author={Harang, Fabian~A.},
      author={Mayorcas, Avi},
       title={Distribution dependent {SDE}s driven by additive fractional
  {B}rownian motion},
        date={2023},
        ISSN={0178-8051,1432-2064},
     journal={Probab. Theory Related Fields},
      volume={185},
      number={1-2},
       pages={251\ndash 309},
         url={https://doi.org/10.1007/s00440-022-01145-w},
      review={\MR{4528970}},
}

\bib{Gyongy1986Mimicking}{article}{
      author={Gy\"{o}ngy, I.},
       title={Mimicking the one-dimensional marginal distributions of processes
  having an {I}t\^{o} differential},
        date={1986},
        ISSN={0178-8051},
     journal={Probab. Theory Relat. Fields},
      volume={71},
      number={4},
       pages={501\ndash 516},
         url={https://doi.org/10.1007/BF00699039},
      review={\MR{833267}},
}

\bib{Han2022Solving}{article}{
      author={Han, Yi},
       title={Solving {M}c{K}ean-{V}lasov sdes and spdes via relative entropy},
        date={2022},
     journal={arXiv preprint},
      eprint={2204.05709},
}

\bib{Hu2023Locally}{article}{
      author={Hu, Kevin},
      author={Ramanan, Kavita},
      author={Salkeld, William},
       title={The fundamental martingale with applications to {M}arkov {R}andom
  {F}ields},
        date={2024},
     journal={arXiv preprint},
      eprint={2405.xxxxx},
}

\bib{Jabin2017Mean}{incollection}{
      author={Jabin, Pierre-Emmanuel},
      author={Wang, Zhenfu},
       title={Mean field limit for stochastic particle systems},
        date={2017},
   booktitle={Active particles. {V}ol. 1. {A}dvances in theory, models, and
  applications},
      series={Model. Simul. Sci. Eng. Technol.},
   publisher={Birkh\"{a}user/Springer, Cham},
       pages={379\ndash 402},
      review={\MR{3644596}},
}

\bib{Jia2023QLearning}{article}{
      author={Jia, Yanwei},
      author={Zhou, Xun~Yu},
       title={q-learning in continuous time},
        date={2023},
        ISSN={1532-4435,1533-7928},
     journal={J. Mach. Learn. Res.},
      volume={24},
       pages={Paper No. [161], 61},
      review={\MR{4596108}},
}

\bib{Kleptsyna2000General}{article}{
      author={Kleptsyna, M.~L.},
      author={Le~Breton, A.},
      author={Roubaud, M.-C.},
       title={General approach to filtering with fractional {B}rownian
  noises---application to linear systems},
        date={2000},
        ISSN={1045-1129},
     journal={Stochastics Stochastics Rep.},
      volume={71},
      number={1-2},
       pages={119\ndash 140},
      review={\MR{1813509}},
}

\bib{Kleptsyna2000Parameter}{incollection}{
      author={Kleptsyna, M.~L.},
      author={Le~Breton, A.},
      author={Roubaud, M.-C.},
       title={Parameter estimation and optimal filtering for fractional type
  stochastic systems},
        date={2000},
      volume={3},
       pages={173\ndash 182},
         url={https://doi.org/10.1023/A:1009923431187},
        note={19th ``Rencontres Franco-Belges de Statisticiens'' (Marseille,
  1998)},
      review={\MR{1819294}},
}

\bib{karatzasShreve}{book}{
      author={Karatzas, Ioannis},
      author={Shreve, Steven~E.},
       title={Brownian motion and stochastic calculus},
     edition={Second},
      series={Graduate Texts in Mathematics},
   publisher={Springer-Verlag, New York},
        date={1991},
      volume={113},
        ISBN={0-387-97655-8},
         url={https://doi.org/10.1007/978-1-4612-0949-2},
      review={\MR{1121940}},
}

\bib{Leonard2012Girsanov}{incollection}{
      author={L\'{e}onard, Christian},
       title={Girsanov theory under a finite entropy condition},
        date={2012},
   booktitle={S\'{e}minaire de {P}robabilit\'{e}s {XLIV}},
      series={Lecture Notes in Math.},
      volume={2046},
   publisher={Springer, Heidelberg},
       pages={429\ndash 465},
         url={https://doi.org/10.1007/978-3-642-27461-9_20},
      review={\MR{2953359}},
}

\bib{lacker2022Hierarchies}{article}{
      author={Lacker, Daniel},
       title={Hierarchies, entropy, and quantitative propagation of chaos for
  mean field diffusions},
        date={2023},
        ISSN={2690-0998,2690-1005},
     journal={Probab. Math. Phys.},
      volume={4},
      number={2},
       pages={377\ndash 432},
         url={https://doi.org/10.2140/pmp.2023.4.377},
      review={\MR{4595391}},
}

\bib{Lacker2023Closed}{article}{
      author={Lacker, Daniel},
      author={Le~Flem, Luc},
       title={Closed-loop convergence for mean field games with common noise},
        date={2023},
        ISSN={1050-5164,2168-8737},
     journal={Ann. Appl. Probab.},
      volume={33},
      number={4},
       pages={2681\ndash 2733},
         url={https://doi.org/10.1214/22-aap1876},
      review={\MR{4612653}},
}

\bib{Lacker2023Sharp}{article}{
      author={Lacker, Daniel},
      author={Le~Flem, Luc},
       title={Sharp uniform-in-time propagation of chaos},
        date={2023},
        ISSN={0178-8051,1432-2064},
     journal={Probab. Theory Related Fields},
      volume={187},
      number={1-2},
       pages={443\ndash 480},
         url={https://doi.org/10.1007/s00440-023-01192-x},
      review={\MR{4634344}},
}

\bib{lacker2020Locally}{article}{
      author={Lacker, Daniel},
      author={Ramanan, Kavita},
      author={Wu, Ruoyu},
       title={Locally interacting diffusions as {M}arkov random fields on path
  space},
        date={2021},
        ISSN={0304-4149,1879-209X},
     journal={Stochastic Process. Appl.},
      volume={140},
       pages={81\ndash 114},
         url={https://doi.org/10.1016/j.spa.2021.06.007},
      review={\MR{4276494}},
}

\bib{Lacker2019Local}{article}{
      author={Lacker, Daniel},
      author={Ramanan, Kavita},
      author={Wu, Ruoyu},
       title={Local weak convergence for sparse networks of interacting
  processes},
        date={2023},
        ISSN={1050-5164,2168-8737},
     journal={Ann. Appl. Probab.},
      volume={33},
      number={2},
       pages={643\ndash 688},
         url={https://doi.org/10.1214/22-aap1830},
      review={\MR{4564415}},
}

\bib{lacker2020marginal}{article}{
      author={Lacker, Daniel},
      author={Ramanan, Kavita},
      author={Wu, Ruoyu},
       title={Marginal dynamics of interacting diffusions on unimodular
  {G}alton-{W}atson trees},
        date={2023},
        ISSN={0178-8051,1432-2064},
     journal={Probab. Theory Related Fields},
      volume={187},
      number={3-4},
       pages={817\ndash 884},
         url={https://doi.org/10.1007/s00440-023-01226-4},
      review={\MR{4664586}},
}

\bib{Medveded2019Continuum}{article}{
      author={Medvedev, Georgi~S.},
       title={The continuum limit of the {K}uramoto model on sparse random
  graphs},
        date={2019},
        ISSN={1539-6746,1945-0796},
     journal={Commun. Math. Sci.},
      volume={17},
      number={4},
       pages={883\ndash 898},
         url={https://doi.org/10.4310/CMS.2019.v17.n4.a1},
      review={\MR{4030504}},
}

\bib{Nualart2002Regularization}{article}{
      author={Nualart, David},
      author={Ouknine, Youssef},
       title={Regularization of differential equations by fractional noise},
        date={2002},
        ISSN={0304-4149},
     journal={Stochastic Process. Appl.},
      volume={102},
      number={1},
       pages={103\ndash 116},
         url={https://doi.org/10.1016/S0304-4149(02)00155-2},
      review={\MR{1934157}},
}

\bib{Nadtochiy2020Mean}{article}{
      author={Nadtochiy, Sergey},
      author={Shkolnikov, Mykhaylo},
       title={Mean field systems on networks, with singular interaction through
  hitting times},
        date={2020},
        ISSN={0091-1798,2168-894X},
     journal={Ann. Probab.},
      volume={48},
      number={3},
       pages={1520\ndash 1556},
         url={https://doi.org/10.1214/19-AOP1403},
      review={\MR{4112723}},
}

\bib{nualart2006malliavin}{book}{
      author={Nualart, David},
       title={The {M}alliavin calculus and related topics},
     edition={Second},
      series={Probability and its Applications (New York)},
   publisher={Springer-Verlag, Berlin},
        date={2006},
        ISBN={978-3-540-28328-7},
      review={\MR{2200233}},
}

\bib{Norros1999Elementary}{article}{
      author={Norros, Ilkka},
      author={Valkeila, Esko},
      author={Virtamo, Jorma},
       title={An elementary approach to a {G}irsanov formula and other
  analytical results on fractional {B}rownian motions},
        date={1999},
        ISSN={1350-7265},
     journal={Bernoulli},
      volume={5},
      number={4},
       pages={571\ndash 587},
         url={https://doi.org/10.2307/3318691},
      review={\MR{1704556}},
}

\bib{Redig2010Short}{article}{
      author={Redig, Frank},
      author={R{\oe}lly, Sylvie},
      author={Ruszel, Wioletta},
       title={Short-time {G}ibbsianness for infinite-dimensional diffusions
  with space-time interaction},
        date={2010},
        ISSN={0022-4715,1572-9613},
     journal={J. Stat. Phys.},
      volume={138},
      number={6},
       pages={1124\ndash 1144},
         url={https://doi.org/10.1007/s10955-010-9926-7},
      review={\MR{2601426}},
}

\bib{Sznitman}{incollection}{
      author={Sznitman, Alain-Sol},
       title={Topics in propagation of chaos},
        date={1991},
   booktitle={{\'E}cole d{'}{{\'E}}t\'e de {P}robabilit\'es de {S}aint-{F}lour
  {XIX}---1989},
      series={Lecture Notes in Math.},
      volume={1464},
   publisher={Springer, Berlin},
       pages={165\ndash 251},
         url={http://dx.doi.org/10.1007/BFb0085169},
      review={\MR{1108185}},
}

\bib{Talagrand1996Transportation}{article}{
      author={Talagrand, M.},
       title={Transportation cost for {G}aussian and other product measures},
        date={1996},
        ISSN={1016-443X,1420-8970},
     journal={Geom. Funct. Anal.},
      volume={6},
      number={3},
       pages={587\ndash 600},
         url={https://doi.org/10.1007/BF02249265},
      review={\MR{1392331}},
}

\end{biblist}
\end{bibdiv}

 
    \appendix
    \section{Appendix}
    \label{Appendix:Entropy}

    \begin{lemma}[\cite{lacker2022Hierarchies}*{Lemma 4.8}]
        \label{Lemma:Lacker-ODE}
        Let $A_k^l$ and $B_k^l$ be defined as in Equation \eqref{eq:Lacker-functions}. For $l> k\geq 1$ and $t \in \bR^+$, we have
        \begin{equation*}
            A_k^l(t) \leq \exp\bigg( -2(l+1) \Big( e^{-\gamma t} - \frac{k}{l+1} \Big)_+^2 \bigg). 
        \end{equation*}
        Moreover, for integers $r\geq 0$, 
        \begin{align*}
            \sum_{l=k}^{\infty} l^r A_k^l(t) \leq \frac{(k+r)!}{(k-1)!} \cdot \frac{e^{\gamma(r+1)t} - 1}{r+1}, 
            \\
            \sum_{l=k}^\infty l^2 B_k^l(t) \leq k(k+1) e^{2\gamma t} \leq 2k^2 e^{2\gamma t}. 
        \end{align*}
    \end{lemma}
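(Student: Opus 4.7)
The plan is to identify both $A_k^l(t)$ and $B_k^l(t)$ as functionals of the pure-birth (Yule) process $(N_t)_{t \geq 0}$ with rate $\gamma$, and then read the three claimed inequalities off standard properties of that process. By changing variables $u_{j+1} := t_j - t_{j+1}$ (with $t_{l+1} = 0$) in the nested integral defining $A_k^l(t)$, one sees that $A_k^l(t)$ is the probability that the sum of independent exponentials $E_j \sim \mathrm{Exp}(\gamma j)$, $j = k, k+1, \ldots, l$, is at most $t$; this is precisely $\bP[N_t \geq l+1 \mid N_0 = k]$. An analogous computation, incorporating the $\mathrm{Exp}(\gamma l)$ holding probability at state $l$, gives $B_k^l(t) = \bP[N_t = l \mid N_0 = k]$, consistent with the boundary case $B_k^k(t) = e^{-\gamma k t}$.

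For the pointwise bound on $A_k^l(t)$, I will use the negative-binomial representation $N_t \stackrel{d}{=} \min\{m : B_1 + \cdots + B_m = k\}$, with $(B_i)$ i.i.d.\ $\mathrm{Ber}(e^{-\gamma t})$, to write $A_k^l(t) = \bP[\mathrm{Bin}(l, e^{-\gamma t}) \leq k-1]$. Hoeffding's inequality gives $\bP[\mathrm{Bin}(l, p) \leq k-1] \leq \exp(-2(lp - k + 1)^2 / l)$ whenever $lp \geq k-1$, and it remains to verify that $(lp - k + 1)^2 / l \geq ((l+1)p - k)^2 / (l+1)$ whenever $(l+1)p \geq k$. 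Setting $\alpha := (l+1)p - k$ and $\beta := lp - k + 1 = \alpha + (1-p)$, this reduces to the short algebraic identity $(l+1)\beta^2 - l\alpha^2 = (l+1)(1-p)(\alpha + \beta) + \alpha^2 \geq 0$. When $(l+1)p < k$, the claimed bound reads $A_k^l(t) \leq 1$ and is trivial.

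For the summation bounds I will exploit the factorial moments of the Yule process. Applying Kolmogorov's forward equation to the test function $f_r(n) := n(n+1) \cdots (n+r)$ yields the pointwise identity $n[f_r(n+1) - f_r(n)] = (r+1) f_r(n)$, so $\partial_t \bE[f_r(N_t)] = \gamma (r+1) \bE[f_r(N_t)]$ and hence $\bE[f_r(N_t) \mid N_0 = k] = \frac{(k+r)!}{(k-1)!} e^{\gamma(r+1)t}$. The bound on $\sum_{l \geq k} l^2 B_k^l(t) = \bE[N_t^2 \mid N_0 = k]$ is then immediate from $N_t^2 \leq N_t(N_t+1) = f_1(N_t)$ and $k(k+1) \leq 2 k^2$. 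For $g(t) := \sum_{l \geq k} l^r A_k^l(t)$, I will differentiate using the forward identity $\partial_t \bP[N_t \geq l+1] = \gamma l \, \bP[N_t = l]$ (telescoping of the forward equations, with the tail terms vanishing since $N_t$ has all moments), which gives $g'(t) = \gamma \bE[N_t^{r+1}] \leq \gamma \bE[f_r(N_t)] = \gamma \frac{(k+r)!}{(k-1)!} e^{\gamma(r+1)t}$; integrating on $[0,t]$ with $g(0) = 0$ delivers the stated bound.

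The only non-routine step I anticipate is the clean identification of $A_k^l$ and $B_k^l$ as Yule-process functionals, which requires careful bookkeeping of indices in the change of variables for the iterated integrals. Once that reduction is in hand, all three claims are short consequences of Hoeffding's inequality, the elementary algebraic comparison of the two Hoeffding exponents, and the closed-form factorial moments of the Yule process obtained from Kolmogorov's forward equation.
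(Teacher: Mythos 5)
Your proposal is correct: the identification of $A_k^l(t)=\bP[N_t\geq l+1\,|\,N_0=k]$ and $B_k^l(t)=\bP[N_t=l\,|\,N_0=k]$ for the rate-$\gamma n$ pure-birth process is exactly right (the triangular change of variables has Jacobian $1$), the Hoeffding step together with your algebraic comparison of exponents gives the first bound, and the factorial-moment computation $\bE[f_r(N_t)]=\tfrac{(k+r)!}{(k-1)!}e^{\gamma(r+1)t}$ combined with $\tfrac{d}{dt}A_k^l(t)=\gamma l B_k^l(t)$ and $g(0)=0$ yields both summation bounds, including the $e^{\gamma(r+1)t}-1$ numerator. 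Note that this paper does not prove the lemma itself but imports it from \cite{lacker2022Hierarchies}*{Lemma 4.8}, and your argument is essentially the standard proof given there (Yule process, negative-binomial/binomial tail identity, Chernoff--Hoeffding bound, and closed-form factorial moments), so no genuinely different route is involved.
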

	
\end{document}